\definecolor{darkred}{rgb}{0.8,0.1,0.1}
\theoremstyle{plain}
\newtheorem{theo}{Theorem}[section]
\newtheorem{lem}[theo]{Lemma}
\newtheorem{propo}[theo]{Proposition}
\newtheorem{cor}[theo]{Corollary}
\theoremstyle{definition}
\newtheorem{defi}[theo]{Definition}
\newtheorem{ex}[theo]{Example}
\newtheorem{rem}[theo]{Remark}
\numberwithin{equation}{section}
\def\bbR{\mathbb{R}}
\def\bbZ{\mathbb{Z}}
\def\bbT{\mathbb{T}}
\def\bbS{\mathbb{S}}
\def\bbB{\mathbb{B}}
\def\Hom{\mathrm{Hom}}
\def\id{\mathrm{id}}
\def\supp{\mathrm{supp}}
\def\dd{\mathrm{d}}
\def\1{\mathbbm{1}}
\def\op{\mathrm{op}}
\newcommand{\ips}[2]{\langle #1,#2\rangle}
\def\Man{\mathsf{Man}}
\def\oMan{\mathsf{oMan}}
\def\Pair{\mathsf{Pair}}
\def\PePair{\mathsf{Pair}_\mathsf{pe}}
\def\DSet{\mathsf{DSet}}
\def\Ch{\mathsf{Ch}}
\def\Cat{\mathsf{Cat}}
\def\Ab{\mathsf{Ab}}
\def\OmegaZ{\Omega_\bbZ}
\def\free{\mathrm{free}}
\def\tor{\mathrm{tor}}
\def\dR{\mathrm{dR}}
\def\H{\mathrm{H}}
\def\dH{\hat{\H}}
\def\Ht{\H_\tor}
\def\Hf{\H_\free}
\def\cu{\mathrm{curv}}
\def\ch{\mathrm{char}}
\def\del{\partial}
\def\cdel{\updelta}
\def\im{\mathrm{im\,}}
\def\colim{\mathrm{colim}}
\def\emb{\hookrightarrow}
\def\mcK{\mathcal{K}}
\def\mcO{\mathcal{O}}
\def\mcU{\mathcal{U}}
\def\sk{\vspace{2mm}}
\title{%
Cheeger-Simons differential characters\\ 
with compact support and Pontryagin duality
}
\author{%
Christian Becker$^{1,2,a}$, Marco Benini$^{1,3,b}$, Alexander Schenkel$^{3,4,c}$ and Richard J.\ Szabo$^{3,d}$ \vspace{4mm}\\
{\small $^1$ Institut f\"ur Mathematik, Universit\"at Potsdam,}\\
{\small Karl-Liebknecht-Str.~24-25, 14476 Potsdam, Germany.}\vspace{2mm}\\
{\small $^2$ Institut f\"ur Mathematik und Informatik, Universit\"at Greifswald,}\\
{\small Walther-Rathenau-Str.~47, 17489 Greifswald, Germany.}\vspace{2mm}\\
{\small $^3$ Department of Mathematics, Heriot-Watt University,}\\
{\small Colin Maclaurin Building, Riccarton, Edinburgh EH14 4AS, United Kingdom.}\vspace{0.2mm}\\
{\small  Maxwell Institute for Mathematical Sciences, Edinburgh, United Kingdom.}\vspace{0.2mm}\\
{\small  Higgs Centre for Theoretical Physics, Edinburgh, United Kingdom.}\vspace{2mm}\\
{\small $^4$ School of Mathematical Sciences, University of Nottingham,}\\
{\small University Park, Nottingham NG7 2RD, United Kingdom.}\vspace{4mm}\\
{\footnotesize \texttt{Email:} $^a$\texttt{christian.becker@uni-greifswald.de},
$^b$\texttt{mbenini87@gmail.com}, }\\
{\footnotesize $^c$\texttt{alexander.schenkel@nottingham.ac.uk}, $^d$\texttt{R.J.Szabo@hw.ac.uk}}
 }
\date{December 2019}
\begin{document}

\maketitle

\begin{abstract}
\noindent
By adapting the Cheeger-Simons approach to differential cohomology, we establish a notion of differential cohomology with compact support. We show that it is functorial with respect to open embeddings and that it fits into a natural diagram of exact sequences which compare it to compactly supported singular cohomology and differential forms with compact support, in full analogy to ordinary differential cohomology. We prove an excision theorem for differential cohomology using a suitable relative version. Furthermore, we use our model to give an independent proof of Pontryagin duality for differential cohomology recovering a result of [Harvey, Lawson, Zweck -- Amer.\ J.\ Math.\ {\bf 125} (2003) 791]: On any oriented manifold, ordinary differential cohomology is isomorphic to the smooth Pontryagin dual of compactly supported differential cohomology. For manifolds of finite-type, a similar result is obtained interchanging ordinary with compactly supported differential cohomology.
\end{abstract}

\paragraph*{Report no.:} EMPG--15--16
\paragraph*{Keywords:} Cheeger-Simons differential characters, differential cohomology, relative differential cohomology, differential cohomology with compact support, smooth Pontryagin duality
\paragraph*{MSC 2010:} 53C08, 55N20, 57R19

\newpage

\bigskip
{\baselineskip=12pt
\tableofcontents
}
\bigskip


\section{Introduction and summary}
The purpose of this paper is to develop a notion of differential 
cohomology with compact support which is based on the 
Cheeger-Simons approach to differential cohomology \cite{CS}. 
To obtain this result, we consider a suitable model for relative differential cohomology. 
Our techniques allow us achieve two main goals: 
1)~We prove an excision theorem for our model of relative differential cohomology, 
see Theorem \ref{thmDiffCharExcision}.
2)~Using our model, which is closer to the original approach to differential cohomology \cite{CS}, 
we independently recover the results first established by \cite{HLZ} 
about smooth Pontryagin duality for differential cohomology 
without resorting to the theory of de Rham-Federer currents. 
In this respect, our Theorem \ref{thmCDiffCharIso} is equivalent to \cite[Theorem 8.7]{HLZ}, 
where smooth Pontryagin duality is obtained 
using the de Rham-Federer model for differential cohomology. 
Along the way, we also clarify the relation between compactly supported 
and unrestricted Cheeger-Simons differential characters, 
eventually amending an error in \cite[Theorem 8.4]{HLZ} 
(which however does not affect the other results of that paper), 
see Remarks \ref{remHLZ} and \ref{remNonSub}. 
\sk

Differential cohomology is a family of
functors $\dH^k(-;\bbZ): \Man^\op \to \Ab$, for $k \geq 0$,  
from the category of smooth manifolds to the category of Abelian groups that comes together with four natural 
transformations comparing it with certain groups of differential forms and 
also with smooth singular cohomology (with coefficients in $\bbZ$ and $\bbT=\bbR/\bbZ$).
The differential cohomology groups $\dH^k(M;\bbZ)$ of 
a smooth manifold $M$ may thus be regarded as a refinement of the smooth
singular cohomology groups $\H^k(M;\bbZ)$ with integer coefficients 
by differential forms. As a classical geometric example, the second differential cohomology group $\dH^2(M;\bbZ)$ 
is a refinement of the Picard group of isomorphism classes of  
Hermitean line bundles on $M$ to the group of isomorphism classes of  Hermitean line bundles with connection.
From this perspective, it seems natural to expect differential cohomology 
with compact support to be a family of functors $\dH^k_{\rm c}(-;\bbZ):\Man_{m,\emb} \to \Ab$, for
$k\geq 0$, from the category of $m$-dimensional manifolds with smooth open embeddings as morphisms 
to the category of Abelian groups that comes together with natural transformations 
comparing it to compactly supported differential forms and cohomology with compact support.
The notion of compactly supported differential cohomology that we develop in this paper
is easily seen to satisfy this property: It is functorial with respect to smooth open embeddings and it comes 
together with four natural transformations similar to the ones of ordinary differential cohomology.
Thus the compactly supported differential cohomology groups $\dH_{\rm c}^k(M;\bbZ)$ 
of a manifold refine the compactly supported cohomology groups $\H^k_{\rm c}(M;\bbZ)$ 
with $\bbZ$-coefficients by compactly supported differential forms.
In the classical geometric example, the group $\dH_{\rm c}^2(M;\bbZ)$ describes
isomorphism classes of  Hermitean line bundles with connection on $M$ and 
a parallel section outside some compact subset.
\sk

By now there exist several different models for differential cohomology, 
i.e.~several different constructions of the functors $\dH^k(-;\bbZ): \Man^\op \to \Ab$, 
given by differential characters~\cite{CS}, de Rham-Federer currents \cite{HLZ}, 
differential cocycles \cite{HS} and smooth Deligne cohomology \cite{Brylinski}.
All models of differential cohomology are known to be related by unique natural isomorphisms. 
Several instances of such natural isomorphisms have been constructed in \cite{HL1, HLZ}. 
More recently, \cite{SS} introduced an axiomatic approach to differential cohomology, 
showing uniqueness up to unique natural equivalence. 
A similar result has been obtained in \cite{BB}, however using a different approach. 
In the present paper we adopt the original model of differential characters as 
established by Cheeger and Simons in \cite{CS}.
Cheeger-Simons differential characters are $\bbT$-valued group characters on the Abelian group $Z_{k-1}(M)$ 
of singular cycles in $M$ that satisfy a certain smoothness condition. 
An interesting alternative perspective would be to use the 
more abstract homotopy theoretical approach to differential cohomology, 
see e.g.\ \cite{Bu12}. We expect that some of our results,
e.g.\ on functorial properties of various constructions and the excision theorem for differential cohomology,
are intrinsic properties of this framework and could be shown with less arguments.
However, we decided to use the more traditional Cheeger-Simons approach \cite{CS} 
in order to avoid the rather technical tools of homotopy theory. 
This gives us also the chance to relate the notion of compactly supported differential characters 
that we introduce with the original Cheeger-Simons characters, see Section \ref{secCDiffChar}. 
\sk

To introduce differential characters with compact support, we follow the 
well-known construction of compactly supported cohomology as the colimit 
of the relative cohomology functor $\H^\sharp(M,M\setminus -;G)$, with $G$ an Abelian group, 
over the directed set $\mcK_M$ of compact subsets of $M$.
For this construction we need an appropriate notion of differential cohomology relative to a 
smooth submanifold $S \subseteq M$ (possibly with boundary).
As explained in \cite{BB}, there are two different such notions in the realm of 
differential characters: They arise from the two different ways to define relative 
(de Rham and singular) cohomology as the cohomology of the mapping cone complex 
of the inclusion $i_S:S \hookrightarrow M$ or as the cohomology of the subcomplex of 
forms (or cochains) vanishing outside $S \subseteq M$.
Thus relative differential cohomology may be defined as the group of differential 
characters on either cycles of the mapping cone complex or on relative cycles.
Some confusion might arise from the fact that what are called 
\emph{relative differential characters} in the literature \cite{BrT,BB,FeR,B14} 
are not differential characters on relative cycles but characters on mapping cone cycles.
In \cite{BB} it is shown that the group of differential characters on relative cycles 
$\dH^k(M,S;\bbZ)$ is a subgroup of the group of relative differential characters.
Elements of this subgroup are called \emph{parallel relative differential characters} 
in \cite{BB} for geometric reasons. To avoid confusion, in the present paper we will 
only use the groups $\dH^k(M,S;\bbZ)$ of \emph{differential characters on relative cycles}.
\sk

Part of the present paper generalizes some results from \cite{BB} for differential characters on relative cycles
to a less restrictive setting: We allow arbitrary embedded submanifolds (possibly with boundary), 
whereas in \cite{BB} only closed submanifolds are taken into account. Restricting the consideration to properly 
embedded submanifolds, we immediately recover the results from \cite{BB} with 
basically the same arguments.
In the course of introducing differential cohomology with compact support we also 
establish the excision property for relative differential cohomology:
For an open subset $O \subseteq M$ and a closed subset $C \subseteq M$ such that 
$C \subseteq O$, the morphism $(O,O \setminus C) \to (M,M \setminus C)$ in 
the category of pairs induces an isomorphism 
\begin{equation}
\dH^k(M,M \setminus C;\bbZ) \simeq \dH^k(O,O \setminus C;\bbZ)
\end{equation}
in differential cohomology. To the best of our knowledge, this property has 
not been discussed rigorously in the literature so far, although it might have been 
conjectured by experts in the field.
This isomorphism is important for establishing functoriality $\dH^k_{\rm c}(-;\bbZ):\Man_{m,\emb} \to \Ab$
of (our model for) compactly supported differential cohomology with respect to open embeddings of $m$-dimensional manifolds.
\sk

This paper is organized as follows: In Section \ref{secPrel} 
we review some requisite preliminaries
on smooth singular (co)homology groups and their relative versions, compactly supported cohomology
and the Cheeger-Simons model for differential cohomology.
\sk

In Section \ref{secRelDiffChar} we introduce and study 
differential characters on relative cycles, which provide the model for relative differential cohomology used in this paper.
We prove in Theorem \ref{thmRelDiffChar} that, for any submanifold $S\subseteq M$ (possibly with boundary),
the group of differential characters on relative cycles $\dH^k(M,S;\bbZ)$ fits
into a commutative diagram involving a short exact sequence. For generic submanifolds $S$,
this diagram is an incomplete analog of the usual diagram for 
(absolute) differential cohomology. In Theorem \ref{eqRelDiffCharDiaAlt} we shall prove that
for the case where $S\subseteq M$ is properly embedded, the incomplete diagram
can be extended to a full diagram of short exact sequences.
The incomplete diagram in Theorem \ref{thmRelDiffChar} is however sufficient to
prove an excision theorem for differential cohomology in Subsection \ref{subRelDiffCharNat}.
We then show in Subsection~\ref{subRelDiffCharModule} that the graded group of differential characters on relative cycles
$\dH^\sharp(M,S;\bbZ)$ is a module over the differential cohomology ring $\dH^\sharp(M;\bbZ)$.
\sk

In Section \ref{secCDiffChar} we define the groups $\dH^k_{\rm c}(M;\bbZ)$ of differential characters
with compact support as a colimit of the relative differential cohomology functor.
In Theorem \ref{thmCDiffChar} we obtain an analogue of the usual differential cohomology diagram for the compactly supported case. 
We further prove that compactly supported differential cohomology forms a family of 
functors $\dH^k_{\rm c}(-;\bbZ):\Man_{m,\emb} \to \Ab$, for
$k\geq 0$, from the category of $m$-dimensional manifolds with smooth open embeddings as morphisms 
to the category of Abelian groups and that $\dH^\sharp_{\rm c}(M;\bbZ)$ is a module
over the differential cohomology ring $\dH^\sharp(M;\bbZ)$. 
We compare our construction of compactly supported differential characters
with earlier results in \cite{HLZ}, see in particular Remark \ref{remHLZ}.
\sk

In Section \ref{secPontrDuality} we establish smooth Pontryagin duality for differential cohomology.
Similar results were proven in \cite{HLZ} by using (compactly supported) de Rham-Federer characters.
The main results here are the following Pontryagin dualities which are proven in Theorem \ref{thmCDiffCharIso}: 
For any oriented $m$-dimensional manifold $M$, we obtain a natural isomorphism
\begin{equation}
\dH^{m-k+1}(M;\bbZ) \stackrel{\simeq}{\longrightarrow} \dH^k_{\rm c}(M;\bbZ)^\star_\infty~
\end{equation}
between the differential cohomology group $\dH^{m-k+1}(M;\bbZ)$
and the smooth Pontryagin dual $\dH^k_{\rm c}(M;\bbZ)^\star_\infty$ of the compactly supported
differential cohomology group $\dH^k_{\rm c}(M;\bbZ)$.
If moreover $M$ is of finite-type, we can also interchange 
the roles of ordinary and compactly supported differential cohomology to get isomorphisms
\begin{equation}
\dH^{m-k+1}_{\rm c}(M;\bbZ) \stackrel{\simeq}{\longrightarrow} \dH^k(M;\bbZ)^\star_\infty~.
\end{equation}
In~\cite{BBSS} these results are applied to analyze
dualities in (higher) quantum Abelian gauge theories and to the quantization of self-dual fields.


\section{Cohomological preliminaries}\label{secPrel}
We briefly recall some background material which is used extensively throughout the rest of this paper, including smooth singular (co)homology together with their relative versions, 
a colimit prescription for defining cohomology with compact support from relative cohomology,
and Cheeger-Simons differential characters. 
In the following all manifolds will be assumed to be finite-dimensional, smooth, 
paracompact and Hausdorff. Sometimes we shall also consider manifolds with a (smooth) boundary.
For some of our constructions and results in Section \ref{secPontrDuality}
we demand further conditions (such as connectedness, orientability or existence of finite good covers),
which will be stated explicitly where needed.

\subsection{Smooth singular (co)homology and its relative version}\label{secHRelH}
Let $M$ be a manifold (possibly with boundary). 
We denote by $C_\sharp(M)$ the chain complex of smooth singular chains on $M$ with $\bbZ$-coefficients.
The boundary homomorphism is denoted by $\del_k: C_k(M) \to C_{k-1}(M)$ and we shall frequently omit
the subscript $_{k}$ as it will be clear from the context.
The Abelian groups of $k$-cycles and $k$-boundaries on $M$ are given by
$Z_k(M) := \ker \del_k$ and $B_k(M) := \im \del_{k+1}$, respectively. 
The smooth singular $k$-th homology group of $M$ is then defined as the quotient 
$\H_k(M):=Z_k(M) / B_k(M)$. 
\sk

For our purposes, we also have to consider homology on $M$ 
relative to a submanifold $S \subseteq M$ (possibly with boundary),
which is obtained by identifying all chains with support inside $S$ with $0$. 
More precisely, the inclusion $S \subseteq M$ allows us to consider
the chain complex $C_\sharp(S)$ of smooth singular chains on $S$ 
as a subcomplex of $C_\sharp(M)$. 
The complex of smooth singular chains on $M$ relative to $S$ is then defined
as the quotient $C_\sharp(M,S) := C_\sharp(M) / C_\sharp(S)$. 
The boundary homomorphism is denoted by $\del_k: C_k(M,S) \to C_{k-1}(M,S)$.
Notice that any $C_k(M,S)$ is a free Abelian group, even though
it is defined as a quotient.\footnote{
To prove this statement, we observe that the short exact sequence 
$0 \to C_k(S) \to C_k(M) \to C_k(M,S) \to 0$ can be split
by the unique homomorphism $C_k(M)\to C_k(S)$ 
which sends any $k$-simplex $\sigma \in C_k(M)$ with image contained in $S$ 
to the corresponding $\sigma\in C_k(S)$ and any other $k$-simplex to $0$.
It follows that $C_k(M,S)$ is a direct summand of the free Abelian group $C_k(M)$ 
and hence a free Abelian group as well.
}
Similarly to above, the Abelian groups of relative $k$-cycles $Z_k(M,S)$
and relative $k$-boundaries $B_k(M,S)$ are respectively given by 
the kernel and image of the boundary homomorphism in $C_\sharp(M,S)$.
The relative $k$-th homology group is defined as the quotient 
$\H_k(M,S):= Z_k(M,S) / B_k(M,S)$. 
\sk

Let $G$ be an arbitrary Abelian group. The cochain complex $C^\sharp(M;G)$ of 
$G$-valued smooth singular cochains on $M$ is defined by $C^k(M;G) := \Hom(C_k(M),G)$
together with the coboundary homomorphism $\cdel_k := \Hom(\del_{k+1},G): C^k(M;G) \to C^{k+1}(M;G)$. 
The Abelian groups of $G$-valued $k$-cocycles and $k$-coboundaries on $M$ are given by
$Z^k(M;G) := \ker \cdel_k$ and $B^k(M;G) := \im \cdel_{k-1}$, respectively. 
The $G$-valued smooth singular $k$-th cohomology group of $M$ is then defined as the quotient 
$\H^k(M;G) := Z^k(M;G) / B^k(M;G)$. The relative version $C^\sharp(M,S;G)$ of the cochain complex
is defined analogously.
We set $C^k(M,S;G) := \Hom(C_k(M,S),G)$ and
$\cdel_k := \Hom(\del_{k+1},G): C^k(M,S;G) \to C^{k+1}(M,S;G)$.
Moreover, $G$-valued relative $k$-cocycles are defined as
$Z^k(M,S;G) := \ker \cdel_k$,  $G$-valued relative $k$-coboundaries 
as $B^k(M,S;G):= \im \cdel_{k-1}$ and the 
$G$-valued relative smooth singular $k$-th cohomology group as
the quotient $\H^k(M,S;G) := Z^k(M,S;G) / B^k(M,S;G)$. 
\sk

Given any chain complex $C_\sharp$ of free Abelian groups 
and a short exact sequence $0 \to F \to G \to H \to 0$ of Abelian groups,
there exists a short exact sequence of cochain complexes
\begin{flalign}
\xymatrix{
0\ar[r] & \Hom(C_{\sharp},F) \ar[r]&\Hom(C_{\sharp},G) \ar[r]& \Hom(C_{\sharp},H) \ar[r] & 0~~.
}
\end{flalign}
By a well-known result in homological algebra, see e.g.\ \cite[Theorem 1.3.1]{Weibel},
the cohomology groups of these cochain complexes then fit into a long exact sequence. 
Applying this result to the chain complexes $C_\sharp(M)$ and 
$C_\sharp(M,S)$ we obtain the long exact sequences
\begin{subequations}
\begin{align}\label{eqLESH}
&\xymatrix@C=25pt{
\cdots \ar[r]
		&	\H^{k-1}(M;H) \ar[r]^-\beta
				&	\H^k(M;F) \ar[r]
						&	\H^k(M;G) \ar[r]	
								&	\H^k(M;H) \ar[r]^-\beta		&	\cdots~~,
}
\\[4pt]
\label{eqLESRelH}
&\xymatrix@C=15pt{
\cdots \ar[r]
		&	\H^{k-1}(M,S;H) \ar[r]^-\beta
				&	\H^k(M,S;F) \ar[r]
						&	\H^k(M,S;G) \ar[r]	
								&	\H^k(M,S;H) \ar[r]^-\beta	&	\cdots~~,
}
\end{align}
\end{subequations}
where $\beta$ denotes the connecting homomorphisms. 
\sk

In the following we will often refer to the functorial behavior of the absolute and relative
(co)homology groups. Let us introduce the relevant categories:
\begin{description}
\item[$\Man$:] 
The objects are manifolds and the morphisms are smooth maps. 

\item[$\Man_{m,\emb}$:] The subcategory of $\Man$ whose objects are $m$-dimensional manifolds
and morphisms are open embeddings.

\item[$\Pair$:]
The objects are pairs $(M,S)$ consisting of an object $M$ in $\Man$ and a submanifold $S \subseteq M$ 
(possibly with boundary) and the morphisms $f: (M,S) \to (M^\prime,S^\prime\,)$ 
are those morphisms $f: M \to M^\prime$ in $\Man$ which satisfy $f(S) \subseteq S^\prime$.

\item[$\PePair$:] The full subcategory of $\Pair$ whose objects 
$(M,S)$ are such that $S \subseteq M$ is a \emph{properly embedded} submanifold (possibly with boundary). 

\item[$\DSet$:]
The objects are directed sets and the morphisms are functions preserving the preorder relation. 
Alternatively, interpreting a directed set as a (small) category (with morphisms specified by the preorder relation),
we can interpret $\DSet$ as the full subcategory of the category 
of small categories $\Cat$ whose objects are directed sets. 

\item[$\Ab$:]
The objects are Abelian groups and the morphisms are group homomorphisms. 

\item[$\Ch(\Ab)$:] 
The objects are chain complexes of Abelian groups and the morphisms are chain maps. 
\end{description}

Interpreting cochain complexes $C^\sharp$ canonically as chain 
complexes via the reflection $C^k \to C^{-k}$, we observe that
absolute and relative smooth singular (co)chain complexes are functors
\begin{subequations}\label{eqChainFunctor}
\begin{align}
C_\sharp(-): \Man \longrightarrow \Ch(\Ab)~,			&&	C_\sharp(-): \Pair \longrightarrow \Ch(\Ab)~,			\\[4pt]
C^\sharp(-;G): \Man^\op \longrightarrow \Ch(\Ab)~,	&&	C^\sharp(-;G): \Pair^\op \longrightarrow \Ch(\Ab)~.
\end{align}
\end{subequations}
In fact, simplices in $M$ can be pushed forward along $f: M \to M^\prime$ and such a push-forward 
along a morphism $f: (M,S) \to (M^\prime,S^\prime\,)$ in $\Pair$ sends simplices in $S$ to simplices in $S^\prime$. 
Absolute and relative (co)homology inherit their functorial behavior from these functors, i.e.\
\begin{subequations}
\begin{align}
\H_k(-): \Man \longrightarrow \Ab~,					&&	\H_k(-): \Pair\longrightarrow \Ab~,					\\[4pt]
\H^k(-;G): \Man^\op \longrightarrow \Ab~,			&&	\H^k(-;G): \Pair^\op \longrightarrow \Ab~.
\end{align}
\end{subequations}

\subsection{Smooth singular cohomology with compact support}\label{secHc}
Following \cite[p.\ 242]{Hatcher}, we introduce smooth singular cohomology with compact support 
by means of a colimit prescription. Let 
\begin{equation}\label{eqCompSubsetFunctor}
\mcK: \Man \longrightarrow \DSet~ 
\end{equation}
be the functor which assigns to any manifold $M$ the directed set $\mcK_{M} := \{K\subseteq M \text{ compact}\}$ 
(with preorder relation given by subset inclusion) and to any smooth map
$f: M \to M^\prime$ the morphism $\mcK_{f}: \mcK_{M} \to \mcK_{M^\prime}\,,~ K \mapsto f(K)$ of directed sets.
Interpreting $\mcK_{M}$ as a (small) category, we obtain a functor 
\begin{equation}\label{eqCompSubsetToPairFunctor}
(M,M \setminus -): \mcK_M \longrightarrow \Pair^\op~,
\end{equation}
where the target category is $\Pair^\op$ because we take complements of subsets.
Given an Abelian group $G$, we can precompose the
relative smooth singular cohomology functor 
$\H^k(-;G): \Pair^\op \to \Ab$ with the functor \eqref{eqCompSubsetToPairFunctor}
and obtain $\H^k(M,M \setminus -;G): \mcK_M \to \Ab$. 
We then define the $G$-valued compactly supported smooth singular 
cohomology group of $M$ as the colimit of this functor, i.e.\
\begin{equation}\label{eqHcdef}
\H^k_{\rm c}(M;G) := \colim \big(\H^k(M,M \setminus -;G): \mcK_M \to \Ab\big)~. 
\end{equation}

\begin{rem}[Alternative definitions of compactly supported cohomology]\label{remAlternativeCoho}
Similarly to $\mcK_M$, we can define the directed set 
$\mcO_M^{\rm c} := \{ O \subseteq M \text{ open}~:~\overline O \in \mcK_M\,,~\del \overline O \text{ smooth}\}$ 
of relatively compact open subsets with smooth boundary.
As for $\mcK_M$, the preorder relation on $\mcO_M^{\rm c}$ is given by subset inclusion.
By construction, the complement $M \setminus O$ of any $O \in \mcO_M^{\rm c}$ is a properly embedded submanifold
and thus the assignment $O \mapsto (M,M\setminus O)$ defines a functor
\begin{equation}\label{eqCompSubsetToPairFunctor_2}
(M,M \setminus -): \mcO_M^{\rm c} \longrightarrow \Pair^\op_{\mathsf{pe}}~. 
\end{equation}
Composing this functor with the embedding $\Pair^\op_{\mathsf{pe}}\to \Pair^\op$
and the relative cohomology functor $\H^k(-;G): \Pair^\op \to \Ab$,
we obtain $\H^k(M,M \setminus -;G): \mcO_M^{\rm c} \to \Ab$.
We shall now show that the colimit of this functor provides an equivalent
definition of compactly supported cohomology:
Introducing the directed subset $\mcU_M := \mcK_M \cup \mcO_M^{\rm c}$ 
of the power set of $M$ yields another functor $\H^k(M,M \setminus -;G): \mcU_M \to \Ab$. 
Because both $\mcK_M$ and $\mcO_M^{\rm c}$ are cofinal in $\mcU_M$
we obtain the chain of isomorphisms
\begin{equation}
\H^k_{\rm c}(M;G) \simeq \colim \big(\H^k(M,M \setminus -;G): \mcU_M \to \Ab\big) 
\simeq \colim \big(\H^k(M,M \setminus -;G): \mcO_M^{\rm c} \to \Ab\big)~,
\end{equation}
which provides alternative definitions of $\H^k_{\rm c}(M;G) $.
\end{rem}

\paragraph*{Functoriality:}
We now prove that $G$-valued compactly supported smooth singular 
cohomology is a functor 
\begin{equation}
\H^k_{\rm c}(-;G): \Man_{m,\emb} \longrightarrow \Ab~.
\end{equation}
Given an open embedding $f: M \to M^\prime$ between $m$-dimensional manifolds, 
there is a factorization $f = i \circ g$ into the inclusion $i : f(M) \to M^\prime$
and a diffeomorphism $g: M \to f(M)$.
Since $f(M) \setminus K \subseteq M^\prime\setminus K$, for any $K\in \mcK_{f(M)}$,
the inclusion $i$ defines a  natural transformation 
$i^\ast: \H^k(M^\prime,M^\prime \setminus -;G) \circ \mcK_i \Rightarrow \H^k(f(M),f(M) \setminus -;G)$
between functors from $\mcK_{f(M)}$ to $\Ab$.
By a standard excision argument for cohomology,
we find that $i^\ast$ is a natural isomorphism and 
we denote its inverse by
\begin{equation}
(i^\ast)^{-1}: \H^k(f(M),f(M) \setminus -;G) \Longrightarrow \H^k(M^\prime,M^\prime \setminus -;G) \circ \mcK_i~.
\end{equation}
Furthermore, since $g$ is a diffeomorphism, we immediately get a natural isomorphism 
$g^\ast: \H^k(f(M),f(M) \setminus -;G) \circ \mcK_g \Rightarrow \H^k(M,M \setminus -;G)$
between functors from $\mcK_{M}$ to $\Ab$ and we denote its inverse by
\begin{equation}
(g^\ast)^{-1}: \H^k(M,M \setminus -;G) \Longrightarrow \H^k(f(M),f(M) \setminus -;G) \circ \mcK_g~.
\end{equation}
We have thereby shown that $f^\ast = g^\ast \circ (i^\ast\, \mcK_g): 
\H^k(M^\prime,M^\prime \setminus -;G) \circ \mcK_f \Rightarrow \H^k(M,M \setminus -;G)$ 
is a natural isomorphism with inverse denoted by
\begin{equation}
(f^\ast)^{-1} := ((i^\ast)^{-1}\, \mcK_g) \circ (g^\ast)^{-1}: 
\H^k(M,M \setminus -;G) \Longrightarrow \H^k(M^\prime,M^\prime \setminus -;G) \circ \mcK_f~.
\end{equation}
By the universal property of the colimit, the natural transformation $(f^\ast)^{-1}$
induces a unique homomorphism $f_\ast: \H^k_{\rm c}(M;G) \to \H^k_{\rm c}(M^\prime;G)$. 
It is easy to check that $\H^k_{\rm c}(-;G): \Man_{m,\emb} \to \Ab$ defined in this way is a functor.
\sk

We recall that assigning to diagrams in $\Ab$
over a {\em directed set} their colimits is an exact functor because $\Ab$ is a Grothendieck category. 
Applying this observation to \eqref{eqLESRelH}, with $S=M\setminus K$ running over $K\in \mcK_M$,
we obtain a long exact sequence
\begin{equation}\label{eqLESHc}
\xymatrix{
\cdots \ar[r]
		&	\H^{k-1}_{\rm c}(M;H) \ar[r]^-\beta
				&	\H^k_{\rm c}(M;F) \ar[r]
						&	\H^k_{\rm c}(M;G) \ar[r]	
								&	\H^k_{\rm c}(M;H) \ar[r]^-\beta	&	\cdots~
}
\end{equation}
for compactly supported cohomology.

\subsection{Cheeger-Simons differential characters}\label{subsecCS}
The starting point for our investigations is the graded commutative ring 
of Cheeger-Simons differential characters \cite{CS},
which was later recognized as a model for differential cohomology~\cite{BB, SS}. 
Different models have been developed in terms of de Rham-Federer currents \cite{HLZ}, 
differential cocycles \cite{HS} and smooth Deligne cohomology \cite{Brylinski}. 
In \cite{HL1} several models were shown to be canonically isomorphic 
and, using an axiomatic approach, it was later proven in \cite{SS, BB} 
that differential cohomology is uniquely determined up to unique natural equivalences.
See also \cite{Bu12} for a more abstract homotopy theoretic 
approach to (generalized) differential cohomology theories. 

\begin{defi}\label{defDiffChar}
A degree $k$ Cheeger-Simons differential character on a manifold $M$
is a homomorphism $h : Z_{k-1}(M)\to \bbT$ into the circle group $\bbT := \bbR/\bbZ$
for which there exists a differential form $\omega_h\in\Omega^k(M)$ such that
\begin{flalign}\label{eqn:curvproperty}
h(\del \gamma) = \int_\gamma\, \omega_h \mod \bbZ~,\qquad \forall \gamma \in C_k(M)~.
\end{flalign}
We denote the Abelian group of Cheeger-Simons differential characters by $\dH^k(M;\bbZ)$. 
\end{defi}

Given any $h \in \dH^k(M;\bbZ)$, it is easy to show that
$\omega_h \in \Omega^k(M)$ is uniquely specified by 
\eqref{eqn:curvproperty} and that it has integral periods. 
(Note that, on account of a rescaling argument, differential forms must vanish 
if they take values in a proper subring of the reals upon integration over any chain \cite{CS}. 
In particular, differential forms having integral periods must be closed.) 
Introducing the Abelian group of $k$-forms with integral periods as 
\begin{equation}\label{eqZForms}
\OmegaZ^k(M) 
:= \left\{\omega \in \Omega^k(M)~:~ \int_z\, \omega \in \bbZ\,,\; \forall z \in Z_k(M)\right\} 
\subseteq \Omega_\dd^k(M)~, 
\end{equation}
where the subscript $_\dd$ denotes closed forms, 
we obtain the curvature homomorphism
\begin{equation}
\cu: \dH^k(M;\bbZ) \longrightarrow \OmegaZ^k(M)~,\qquad h \longmapsto \omega_h~.
\end{equation}
Any $u \in \H^{k-1}(M;\bbT)$ may be interpreted 
as an element of $\dH^k(M;\bbZ)$ via the homomorphism
\begin{equation}\label{eqInclFlat}
\kappa: \H^{k-1}(M;\bbT) \longrightarrow \dH^k(M;\bbZ)
\end{equation}
called the ``inclusion of flat fields''. It is constructed as follows:
Since $\bbT$ is divisible, the universal coefficient theorem for cohomology implies
that there exists a natural isomorphism $\H^{k-1}(M;\bbT) \simeq \Hom(\H_{k-1}(M),\bbT)$. 
Given any $u \in \H^{k-1}(M;\bbT)$, via this isomorphism we regard it
as a homomorphism $u: \H_{k-1}(M) \to \bbT$ and define $\kappa\, u : Z_{k-1}(M)\to \bbT$
by precomposition with the quotient map $Z_{k-1}(M) \to \H_{k-1}(M)$.
By definition, the curvature of $\kappa\, u$ is $0$.
\sk

Because $Z_{k-1}(M)$ is a free Abelian group, 
we can lift any $h \in \dH^k(M;\bbZ)\subseteq \Hom(Z_{k-1}(M),\bbT)$ 
to $\tilde h \in \Hom(Z_{k-1}(M),\bbR)$ along the quotient $\bbR \to \bbT$. 
As a consequence of \eqref{eqn:curvproperty}, the cochain 
$\int_\cdot\, \omega_h - \tilde h \circ \del\in C^k(M;\bbR)$ 
factors through the inclusion $\bbZ \to \bbR$
and hence there exists a unique integral cochain $c_h \in C^k(M;\bbZ)$ satisfying 
$\tilde h \circ \del = \int_\cdot\, \omega_h - c_h$. 
One easily checks that $\cdel c_h = 0$ and that the 
cohomology class $[c_h] \in \H^k(M;\bbZ)$ is uniquely determined by $h$. 
This defines the characteristic class homomorphism
\begin{equation}
\ch: \dH^k(M;\bbZ) \longrightarrow \H^k(M;\bbZ)~, \qquad h \longmapsto [c_h]~.
\end{equation}
Any differential form $A \in \Omega^{k-1}(M)$
defines a differential character $h_A \in \dH^k(M;\bbZ)$ by setting
\begin{equation}
h_A(z) = \int_z\, A \mod \bbZ~,
\end{equation}
for all $z \in Z_{k-1}(M)$. By Stokes' theorem 
we observe that $h_A(\del \gamma) = \int_\gamma \, \dd A \mod \bbZ$, for all $\gamma \in C_k(M)$,
and hence that the curvature of $h_A$ is $\dd A$.
We further observe that $h_A$ has trivial characteristic class because
$\int_\cdot\, A \in \Hom(Z_{k-1},\bbR)$ is a lift of $h_A$ and that
$h_A$ is trivial for $A \in \OmegaZ^{k-1}(M)$. 
This defines the topological trivialization homomorphism
\begin{equation}\label{eqRelTopTriv}
\iota: \frac{\Omega^{k-1}(M)}{\OmegaZ^{k-1}(M)} \longrightarrow \dH^k(M;\bbZ)~, \qquad [A] \longmapsto h_A~.
\end{equation}

It is shown in \cite{CS, HLZ, SS, BB} that the group of
Cheeger-Simons differential characters 
fits into the commutative diagram 
\begin{equation}\label{eqDiffCharDia}
\xymatrix{
		&	0 \ar[d]	&	0 \ar[d]	&	0\ar[d]																\\
0 \ar[r]	&	\frac{\H^{k-1}(M;\bbR)}{\Hf^{k-1}(M;\bbZ)} \ar[r] \ar[d]
					&	\frac{\Omega^{k-1}(M)}{\OmegaZ^{k-1}(M)} \ar[r]^-\dd \ar[d]_-\iota
								&	\dd\Omega^{k-1}(M) \ar[r] \ar[d]							&	0	\\
0 \ar[r]	&	\H^{k-1}(M;\bbT) \ar[r]^-\kappa \ar[d]
					&	\dH^k(M;\bbZ) \ar[r]^-\cu \ar[d]_-\ch
								&	\OmegaZ^k(M) \ar[r] \ar[d]									&	0	\\
0 \ar[r]	&	\Ht^k(M;\bbZ) \ar[r] \ar[d]
					&	\H^k(M;\bbZ) \ar[r] \ar[d]
								&	\Hf^k(M;\bbZ) \ar[r] \ar[d] 									&	0	\\
		&	0		&	0		&	0
}
\end{equation}
with all rows and columns short exact sequences.
The short exact sequences in the left column and in the bottom row 
are obtained from the natural long exact sequence \eqref{eqLESH} for cohomology 
associated to the coefficient sequence $0 \to \bbZ \to \bbR \to \bbT \to 0$. 
In the top row and in the right column, one uses the natural isomorphism 
$\OmegaZ^k(M) / \dd \Omega^{k-1}(M) \simeq \Hf^k(M;\bbZ)$ which is a by-product of de~Rham's theorem.

\paragraph*{Functoriality:}
The assignment of the Abelian group $\dH^k(M;\bbZ)$ to each manifold $M$ defines a functor 
\begin{equation}\label{eqDiffCharFunctor}
\dH^k(-;\bbZ): \Man^\op \longrightarrow \Ab~.
\end{equation}
In particular, any smooth map $f: M \to M^\prime$ 
induces a pull-back $f^\ast: \dH^k(M^\prime;\bbZ) \to \dH^k(M;\bbZ)$ 
by dualizing the push-forward $f_\ast: Z_{k-1}(M) \to Z_{k-1}(M^\prime\,)$ for cycles. 
For any $h \in \dH^k(M^\prime;\bbZ)$, the homomorphism $f^\ast h : Z_{k-1}(M)\to\bbT$ fulfills 
the condition \eqref{eqn:curvproperty} with $\omega_{f^\ast h} = f^\ast \omega_h$. 
Moreover, all homomorphisms in diagram \eqref{eqDiffCharDia} are
natural transformations between functors from $\Man^\op$ to $\Ab$. In particular, we have
the natural transformations
\begin{subequations}\label{eqNatHomDiffChar}
\begin{align}
\cu: \dH^k(-;\bbZ) \Longrightarrow \OmegaZ^k(-)~,		&&	\ch: \dH^k(-;\bbZ)\Longrightarrow  \H^k(-;\bbZ)~,			\\[4pt]
\iota: \frac{\Omega^{k-1}(-)}{\OmegaZ^{k-1}(-)} \Longrightarrow  \dH^k(-;\bbZ)~,
											&&	\kappa: \H^{k-1}(-;\bbT) \Longrightarrow  \dH^k(-;\bbZ)~.
\end{align}
\end{subequations}

\begin{ex}\label{ex:linebundles}
The following examples are explained in detail in \cite[Part I, Section 5.1, Examples 18, 19 and 20]{BB}:
For $k=1$, the Abelian group of differential characters $\dH^1(M;\bbZ)$
is canonically isomorphic to the Abelian group of circle-valued functions $C^\infty(M,\bbT)$; 
for any $h\in C^\infty(M,\bbT)$, the curvature is $\cu(h) = \dd\log(h)$ and the
characteristic class is $\ch(h) = h^*[\bbT]$, where $[\bbT] \in \H^1(\bbT;\bbZ)$ denotes the fundamental class.
For $k=2$, the Abelian group of differential characters $\dH^2(M;\bbZ)$ describes
isomorphism classes of  Hermitean line bundles with  connection on $M$.
For $k=3$, the Abelian group of differential characters $\dH^3(M;\bbZ)$ describes
isomorphism classes of Abelian gerbes with connection on $M$.
\end{ex}

\begin{ex}\label{ex:CW}
Another source of examples of differential characters is provided 
by a refinement of classical Chern-Weil theory \cite{CS}:
Let $G$ be a Lie group with finitely many connected components and $P \to M$ 
a principal $G$-bundle with connection $\theta$.
Denote by $F_\theta$ the curvature form of $\theta$.
Associated to an invariant polynomial $\lambda: \mathfrak g^k \to \bbR$ on the 
Lie algebra and a corresponding universal characteristic class $u \in \H^{2k}(BG;\bbZ)$ 
is a differential character in $\dH^{2k}(M;\bbZ)$ with characteristic class $u(P)$ 
and curvature the Chern-Weil form $\lambda(F_\theta)$. 
The differential character is uniquely determined by these properties and the 
requirement of being natural with respect to connection preserving bundle morphisms.
The construction is reviewed in detail in \cite{B14} and also refined to relative 
differential characters by taking into account the Chern-Simons 
form of $(P,\theta)$ associated with $\lambda$.  
\end{ex}

\paragraph*{Ring structure:}
The Abelian groups $\dH^\sharp(M;\bbZ)$ can be endowed with a 
natural graded commutative ring structure
\begin{equation}\label{eqDiffCharRing0}
\cdot \,:\, \dH^k(M;\bbZ) \times \dH^l(M;\bbZ)	 \longrightarrow \dH^{k + l}(M;\bbZ)~, \qquad	(h,h'\,) \longmapsto h \cdot h'\,~.
\end{equation}
Following \cite{CS}, the construction is as follows:
Choose a family of natural cochain homotopies
$B: \Omega^k(M) \times \Omega^l(M) \to C^{k+l-1}(M;\bbR)$,
for all $k,l\geq 0$, between the wedge product for differential forms 
and the cup product for the corresponding cochains 
such that $B(\omega,\theta) = (-1)^{k \,l}\, B(\theta,\omega)$, 
for all $\omega \in \Omega^k(M)$ and $\theta \in \Omega^l(M)$. 
An example of such a family of cochain homotopies is given in \cite[Section 1]{CS} 
by means of iterated subdivisions, and the chain homotopy between subdivision and identity. 
Two different choices of $B$ turn out to be naturally cochain homotopic, 
see \cite[Section 3.2]{HS}. 
Given $h \in \dH^k(M;\bbZ)$ and $h'\, \in \dH^l(M;\bbZ)$, 
one defines their product $h \cdot h'\, \in \dH^{k+l}(M;\bbZ)$ 
as the homomorphism $Z_{k+l-1}(M)\to \bbT$ given by
\begin{subequations}\label{eqDiffCharRing}
\begin{align}
h \cdot h'\,		&	:= \tilde h \smile \int_\cdot\, \cu\, h'\, + (-1)^k \, c_h \smile \tilde h'\, + B(\cu\, h,\cu\, h'\,) \mod \bbZ	\\[4pt]
			&	\hphantom{:}= \tilde h \smile c_{h'}\, + (-1)^k \, \int_\cdot\, \cu\, h \smile \tilde h'\, + B(\cu\, h,\cu\, h'\,)	
					\mod \bbZ~,
\end{align}
\end{subequations}
where $\tilde h \in C^{k-1}(M;\bbR)$ and $\tilde h'\, \in C^{l-1}(M;\bbR)$ 
lift and extend $h$ and $h'\,$, respectively, while $c_h \in C^k(M;\bbZ)$ and $c_{h'}\, \in C^l(M;\bbZ)$ 
are cocycles such that $\cdel \tilde h = \int_\cdot\, \cu\, h - c_h$ and $\cdel \tilde h'\, = \int_\cdot\, \cu\, h'\, - c_{h'}$.
A proof that the map \eqref{eqDiffCharRing0} specified by \eqref{eqDiffCharRing} 
defines an associative and graded-commutative ring structure 
can be found in \cite[Theorem 1.11]{CS} or in \cite[Part II, Section 4.1.1]{BB}. 
Since both the cup product $\smile$ and the cochain homotopy $B$ are natural, 
the expression \eqref{eqDiffCharRing} defines a natural ring structure, i.e.\ 
the pull-back of differential characters along a smooth map is a ring homomorphism. 
The four natural transformations displayed in \eqref{eqNatHomDiffChar}
are compatible with the ring structure, i.e.\
\begin{subequations}
\begin{align}
\cu(h \cdot h'\,)		&	= \cu\, h \wedge \cu\, h'\,~,	&	\ch(h \cdot h'\,)		&	= \ch\, h \smile \ch\, h'\,~,\\[4pt]
\iota\, [A] \cdot h'\,	&	= \iota([A \wedge \cu\, h'\,])~,	&	\kappa\, u \cdot h'\,	&	= \kappa(u \smile \ch\, h'\,)~,
\end{align}
\end{subequations}
for all $h \in \dH^k(M;\bbZ)$, $h'\, \in \dH^l(M;\bbZ)$, 
$[A] \in \Omega^{k-1}(M) / \OmegaZ^{k-1}(M)$ and $u \in \H^{k-1}(M;\bbT)$.

\begin{ex}
The ring structure on $\dH^\sharp(M;\bbZ)$ provides 
a construction of an isomorphism class of  Hermitean line bundles with connection on $M$ 
out of two  circle-valued functions $h_1, h_2 \in C^\infty(M,\bbT)$.
This bundle can be described explicitly as the pull-back along the product map 
$(h_1, h_2): M \to \bbT^2$ of a universal line bundle with connection on the $2$-torus, 
called the \emph{Poincar\'e bundle}, see \cite{Bu12} and \cite{BB} for further details.
\end{ex}

\begin{ex}
The construction of differential characters from classical Chern-Weil theory in Example~\ref{ex:CW} 
is multiplicative: Given two invariant polynomials $\lambda_1,\lambda_2$ 
on $\mathfrak g$ and corresponding universal characteristic classes 
$u_1,u_2 \in \H^\sharp(BG;\bbZ)$, the differential character associated with 
$\lambda_1 \cdot \lambda_2$ coincides with the product of the differential characters 
from Example~\ref{ex:CW}. Its characteristic class is the cup product $u_1(P) \smile u_2(P)$, 
while its curvature is the wedge product $\lambda_1(F_\theta) \wedge \lambda_2(F_\theta)$ 
of the corresponding Chern-Weil forms. 
\end{ex}


\section{Differential characters on relative cycles}\label{secRelDiffChar}
In this section we review a version of relative differential cohomology 
which is used later to introduce compactly supported differential cohomology.
Recall that there are two different ways to define the smooth singular cohomology 
of a manifold $M$ relative to a submanifold $S \subseteq M$ (possibly with boundary): 
The first option is as the cohomology of the mapping cone complex of the inclusion $i_S:S \hookrightarrow M$ 
and the second option is as the cohomology of the cochain complex $C^\sharp(M,S;G) := \Hom(C_\sharp(M,S);G)$,
where $C_\sharp(M,S) := C_\sharp(M)/C_\sharp(S)$ is the quotient complex.
The latter version was discussed in more detail in Subsection \ref{secHRelH}.
A similar point of view can be taken for relative de Rham cohomology: It may be defined 
as the cohomology of the mapping cone complex $\Omega^\sharp(i_S)$ of the inclusion $i_S:S \hookrightarrow M$ as in~\cite[p.\ 78]{BT} 
or as the cohomology of the subcomplex $\Omega^\sharp(M,S)$ 
of forms vanishing on $S$ as in \cite[Chapter XII]{G}.
For a properly embedded submanifold $S \subseteq M$  (possibly with boundary), 
taking into account the long exact cohomology sequences 
arising from the relative/absolute exact sequences for $\Omega^\sharp(i_S)$ and for $\Omega^\sharp(M,S)$,
one concludes that both approaches give the same cohomology groups, although the complexes are different; explicitly, a five lemma argument shows that $\H^\sharp_\dR(M,S) \to \H^\sharp_\dR(i_S)$, 
$[\omega] \mapsto [\omega,0]$ is an isomorphism. 
\sk

Since differential cohomology is a refinement of smooth singular cohomology by differential forms, 
the question arises whether to refine the relative cohomology $\H^\sharp(M,S;\bbZ)$ by the 
mapping cone de Rham complex $\Omega^\sharp(i_S)$ or by the relative de Rham complex $\Omega^\sharp(M,S)$. 
Differential characters based on the mapping cone complex were first introduced in \cite{BrT} 
and they were called \emph{relative differential characters}; 
for a comparison with relative Deligne cohomology see also \cite{FeR}. 
Differential characters on relative cycles were first introduced in 
\cite{BB}\footnote{In this reference it is assumed that $S \subseteq M$
is a closed submanifold, but the constructions and results directly generalize
to the case of properly embedded submanifolds.} and they 
were called \emph{parallel relative differential characters}.
These two versions of relative differential cohomology fit into diagrams 
similar to \eqref{eqDiffCharDia}, provided that one considers properly embedded submanifolds $S\subseteq M$.
They also fit into long exact sequences relating absolute and relative differential cohomology groups, 
see \cite{BB} and below. See \cite{BB} also for a comparison 
between relative and parallel relative differential characters.

\subsection{Definition and first properties}\label{subRelDiffCharProp}
Let us begin by fixing 
our notation for relative de~Rham cohomology. Let $M$ be a manifold 
and $S\subseteq M$ a submanifold (possibly with boundary).
We denote by
\begin{equation}\label{eqRelForms}
\Omega^k(M,S)				:= \{\omega \in \Omega^k(M)\,:\; \omega\vert_S = 0\}
\end{equation}
the Abelian group of $k$-forms vanishing on $S\subseteq M$ and by
\begin{equation}\label{eqRelFormsZ}
\Omega^k_\bbZ(M,S)		:= \Big\{\omega \in \Omega^k(M,S)\,:\; 
									\int_z\, \omega \in \bbZ\,,\; \forall z \in Z_k(M,S)\Big\}
\subseteq \Omega_\dd^k(M,S)
\end{equation}
its subgroup of relative $k$-forms with integral periods 
(as in \eqref{eqZForms}, these are in particular closed).
The natural homomorphism $Z_k(M) \to Z_k(M,S)$ implies 
that $\Omega^k_\bbZ(M,S)$ is a subgroup of $\Omega^k_\bbZ(M)$. 
Since the exterior derivative $\dd$ preserves the 
subgroup $\Omega^\sharp(M,S)\subseteq \Omega^\sharp(M)$,
the relative de Rham complex $(\Omega^\sharp(M,S),\dd)$ 
is a subcomplex of the usual de Rham complex $(\Omega^\sharp(M),\dd)$. 
We denote the corresponding relative de Rham cohomology 
by $\H^\sharp_\dR(M,S)$. 

\begin{defi}\label{defRelDiffChar}
A degree $k$ differential character on relative cycles on a manifold $M$ with respect to $S\subseteq M$
is a homomorphism $h : Z_{k-1}(M,S)\to \bbT$
for which there exists a differential form $\omega_h\in\Omega^k(M)$ such that
\begin{flalign}\label{eqn:curvpropertyrel}
h(\del \gamma) = \int_\gamma\, \omega_h \mod \bbZ~,\qquad \forall \gamma \in C_k(M)~.
\end{flalign}
We denote the Abelian group of differential characters on relative cycles by $\dH^k(M,S;\bbZ)$. 
\end{defi}

As in the case for (absolute) differential characters,
the form $\omega_h$ is uniquely determined by $h\in  \dH^k(M,S;\bbZ)$.
Evaluating \eqref{eqn:curvpropertyrel} on $\gamma \in C_k(S)$ 
we obtain $\omega_h\vert_S = 0$ and evaluating
on  $\gamma =z \in Z_k(M,S)$ it follows that $\int_z \, \omega_h \in \bbZ$. 
This yields the curvature homomorphism for differential characters on relative cycles 
\begin{equation}
\cu: \dH^k(M,S;\bbZ) \longrightarrow \OmegaZ^k(M,S)~, \qquad h \longmapsto \omega_h~.
\end{equation}
Any $u \in \H^{k-1}(M,S;\bbT)$ may be interpreted 
as an element of $\dH^k(M,S;\bbZ)$ with  vanishing curvature.
The argument is exactly the same as for the absolute case (see the text following \eqref{eqInclFlat})
and we just have to replace all absolute (co)homology groups 
with their relative analogues. The corresponding homomorphism
\begin{equation}\label{eqRelInclFlat}
\kappa: \H^{k-1}(M,S;\bbT) \longrightarrow \dH^k(M,S;\bbZ)
\end{equation}
is called the ``inclusion of flat fields (on relative cycles)''.
\sk

Recalling that relative chains (and hence relative cycles) 
form a free Abelian group, as argued in Subsection \ref{secHRelH}, 
we can lift any  $h \in \dH^k(M,S;\bbZ)$ to an element
$\tilde h \in \Hom(Z_{k-1}(M,S),\bbR)$ along the quotient $\bbR \to \bbT$. 
As a consequence of \eqref{eqn:curvpropertyrel},
the relative cochain $\int_\cdot \, \omega_h - \tilde h \circ \del \in C^k(M,S;\bbR)$ 
factors through the inclusion $\bbZ \to \bbR$ and hence there exists a
unique integral relative cochain $c_h \in C^k(M,S;\bbZ)$ satisfying $\tilde h \circ \del = \int_\cdot\, \omega_h - c_h$. 
One easily checks that $\cdel c_h = 0$ 
and that the relative cohomology class $[c_h] \in \H^k(M,S;\bbZ)$ is uniquely determined by $h$. 
This defines the relative counterpart of the characteristic class homomorphism
\begin{equation}\label{eqRelCh}
\ch: \dH^k(M,S;\bbZ) \longrightarrow \H^k(M,S;\bbZ)~, \qquad h \longmapsto [c_h]~.
\end{equation}
Any relative differential form $A \in \Omega^{k-1}(M,S)$
defines a differential character on relative cycles $h_A \in \dH^k(M,S;\bbZ)$ 
by setting
\begin{equation}\label{eqn:hAdef}
h_A(z) = \int_z\, A \mod \bbZ~,
\end{equation}
for all $z \in Z_{k-1}(M,S)$. The curvature of $h_A$ is $\dd A$
and the relative characteristic class is $0$.
Notice further that for $A \in \OmegaZ^{k-1}(M,S)$
integration over relative $(k-1)$-cycles takes values in $\bbZ$, i.e.\ $h_A =0$. 
This defines the relative version of the topological trivialization homomorphism
\begin{equation}
\iota: \frac{\Omega^{k-1}(M,S)}{\OmegaZ^{k-1}(M,S)} \longrightarrow
\dH^k(M,S)~, \qquad [A] \longmapsto h_A~.
\end{equation}
\begin{theo}\label{thmRelDiffChar}
Let $M$ be a manifold and $S \subseteq M$ a submanifold (possibly with boundary). 
Then all squares in the diagram
\begin{equation}\label{eqRelDiffCharDia}
\xymatrix{
		&	0 \ar[d]																						\\
		&	\frac{\H^{k-1}(M,S;\bbR)}{\Hf^{k-1}(M,S;\bbZ)} \ar[d]
					&	\frac{\Omega^{k-1}(M,S)}{\OmegaZ^{k-1}(M,S)} \ar[r]^-\dd \ar[d]_-\iota
								&	\dd \Omega^{k-1}(M,S) \ar[d]										\\
0 \ar[r]	&	\H^{k-1}(M,S;\bbT) \ar[r]^-\kappa \ar[d]	
					&	\dH^k(M,S;\bbZ) \ar[r]^-\cu \ar[d]_-\ch
								&	\OmegaZ^k(M,S) \ar[r] \ar[d]						&	0			\\
0 \ar[r]	&	\Ht^k(M,S;\bbZ)	 \ar[r] \ar[d]
					&	\H^k(M,S;\bbZ) \ar[r]
								&	\Hf^k(M,S;\bbZ) \ar[r]								&	0			\\
		&	0
}
\end{equation}
commute. The left column, the middle row and the bottom row are short exact sequences. 
The middle and right columns form sequences starting with injections 
and the homomorphism in the top row is surjective. 
\end{theo}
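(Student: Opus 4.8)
The plan is to build the relative diagram exactly as one builds the absolute diagram \eqref{eqDiffCharDia}, transporting every construction from $Z_{k-1}(M)$ to the relative cycles $Z_{k-1}(M,S)$; this is legitimate because, as recorded in Subsection~\ref{secHRelH}, the relative chain groups $C_\sharp(M,S)$, and hence $Z_{k-1}(M,S)$, are free Abelian. I would first dispose of the two purely homological pieces. The bottom row and the left column come verbatim from the relative coefficient long exact sequence \eqref{eqLESRelH} for $0\to\bbZ\to\bbR\to\bbT\to0$: the map $\H^k(M,S;\bbZ)\to\H^k(M,S;\bbR)$ has kernel the torsion subgroup $\Ht^k(M,S;\bbZ)$ and image the free quotient $\Hf^k(M,S;\bbZ)$, giving the bottom short exact sequence; while the segment $\H^{k-1}(M,S;\bbR)\to\H^{k-1}(M,S;\bbT)\xrightarrow{\beta}\H^k(M,S;\bbZ)$ gives the left column once one identifies the kernel of the first map with $\Hf^{k-1}(M,S;\bbZ)$ and the image of $\beta$ with $\Ht^k(M,S;\bbZ)$.

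The genuinely new content is exactness of the middle row $0\to\H^{k-1}(M,S;\bbT)\xrightarrow{\kappa}\dH^k(M,S;\bbZ)\xrightarrow{\cu}\OmegaZ^k(M,S)\to0$. Injectivity of $\kappa$ is immediate: $\kappa u$ is the precomposition of $u\colon\H_{k-1}(M,S)\to\bbT$ with $Z_{k-1}(M,S)\to\H_{k-1}(M,S)$, so if it vanishes on all relative cycles then $u=0$. For exactness at $\dH^k(M,S;\bbZ)$, note that $\cu\circ\kappa=0$ by construction; conversely, if $\cu h=0$ then \eqref{eqn:curvpropertyrel} forces $h$ to vanish on all $\del\gamma$, $\gamma\in C_k(M)$, i.e.\ on the relative boundaries $B_{k-1}(M,S)$, so $h$ descends to $\Hom(\H_{k-1}(M,S),\bbT)\simeq\H^{k-1}(M,S;\bbT)$ and is visibly $\kappa$ of that class. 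The main step is surjectivity of $\cu$. Given $\omega\in\OmegaZ^k(M,S)$, I would first use $\OmegaZ^k(M,S)\subseteq\OmegaZ^k(M)$ to conclude $\omega$ is closed, so $c\mapsto\int_c\omega$ descends to a relative cocycle in $C^k(M,S;\bbR)$ taking integer values on $Z_k(M,S)$. I then define $h$ on $B_{k-1}(M,S)$ by $h(\del c):=\int_c\omega\bmod\bbZ$, which is well defined because two $c$ with the same boundary differ by an element of $Z_k(M,S)$, on which the period is integral. Finally, since $\bbT$ is divisible and hence injective as a $\bbZ$-module, I extend $h$ from $B_{k-1}(M,S)$ to all of $Z_{k-1}(M,S)$; the resulting character satisfies \eqref{eqn:curvpropertyrel} with $\omega_h=\omega$.

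For the remaining assertions I would check the three partial sequences and the three commuting squares. The top-row map $\dd$ descends to the quotient by $\OmegaZ^{k-1}(M,S)$ because integral forms are closed, and it surjects onto $\dd\Omega^{k-1}(M,S)$ by definition. Injectivity of $\iota$ holds since $h_A=0$ exactly when $A\in\OmegaZ^{k-1}(M,S)$; the right-column map $\dd\Omega^{k-1}(M,S)\to\OmegaZ^k(M,S)$ is the honest inclusion, one checking $\dd A$ vanishes on $S$ (from $i_S^\ast\dd A=\dd\,i_S^\ast A=0$) and has integral relative periods (Stokes gives $\int_z\dd A=\int_{\del z}A=0$ for $z\in Z_k(M,S)$, as $\del z$ is supported in $S$). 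Commutativity of the square built from $\dd,\iota,\cu$ is the identity $\cu(h_A)=\dd A$; the square built from $\cu,\ch$ and the free/de~Rham projections follows by evaluating the defining cochain $c_h$ on a relative cycle $z$, where $\tilde h(\del z)=0$ yields $c_h(z)=\int_z\omega_h$; and the square built from $\kappa,\ch$ amounts to recognizing $\ch\circ\kappa$ as the Bockstein $\beta$ into $\Ht^k(M,S;\bbZ)$, by the same cochain-lift computation as in the absolute case.

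The delicate point is not any single hard estimate but the scope of the statement: for a general, not properly embedded, submanifold $S$ the middle and right columns genuinely cannot be completed to short exact sequences, which is why only injectivity is asserted there and the top row is recorded as a lone surjection. Concretely, the obstruction is that without properness there is no relative de~Rham theorem identifying $\OmegaZ^k(M,S)/\dd\Omega^{k-1}(M,S)$ with $\Hf^k(M,S;\bbZ)$, so exactness of those columns must be postponed to the properly embedded setting of Theorem~\ref{eqRelDiffCharDiaAlt}. The main care in the present proof is therefore to run the surjectivity-of-$\cu$ construction and the Bockstein identification using only freeness of $Z_{k-1}(M,S)$, divisibility of $\bbT$, and the inclusion $\OmegaZ^k(M,S)\subseteq\OmegaZ^k(M)$, none of which require $S$ to be properly embedded.
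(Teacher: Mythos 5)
Your proposal is correct and its overall architecture coincides with the paper's proof: the left column and bottom row are extracted from the relative long exact sequence \eqref{eqLESRelH} for $0\to\bbZ\to\bbR\to\bbT\to 0$, the right column and top row come from the definitions together with Stokes' theorem, the three squares are verified by the same cochain-lift computations (in particular identifying $\ch\circ\kappa$ with the connecting homomorphism), and $\ker\cu=\im\kappa$ is obtained via $\H^{k-1}(M,S;\bbT)\simeq\Hom(\H_{k-1}(M,S),\bbT)$ exactly as in the paper. The one step where you genuinely diverge is surjectivity of $\cu$. The paper exploits exactness of the already-established bottom row: given $\omega\in\OmegaZ^k(M,S)$ it lifts $\int_\cdot\,\omega\in\Hf^k(M,S;\bbZ)$ to a class $[c]\in\H^k(M,S;\bbZ)$, writes $\int_\cdot\,\omega=c+\cdel\tilde h$ with $\tilde h\in C^{k-1}(M,S;\bbR)$, and restricts $\tilde h\bmod\bbZ$ to relative cycles. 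You instead define $h$ directly on $B_{k-1}(M,S)$ by $h(\del c):=\int_c\omega\bmod\bbZ$ (well defined because two relative chains with equal boundary differ by an element of $Z_k(M,S)$, over which the period is integral) and extend to $Z_{k-1}(M,S)$ using injectivity of the divisible group $\bbT$. Both arguments are valid; yours is more self-contained, bypassing the universal-coefficient identification $\Hf^k(M,S;\bbZ)\simeq\Hom(\H_k(M,S),\bbZ)$ that underlies the paper's lifting step, while the paper's version makes visible the interplay between the middle and bottom rows. Your closing remark about why the columns cannot be completed without proper embedding (absence of a relative de~Rham theorem) matches the paper's treatment in Theorem~\ref{thmRelDiffCharRefined}. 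The only cosmetic omission is that you do not explicitly record $\ch\circ\iota=0$ for the middle column, but this is already established in Subsection~\ref{subRelDiffCharProp}, where the relative characteristic class of $h_A$ is computed to vanish.
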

\begin{proof}
As in the absolute case, the exact sequences in the left column and in the bottom row 
are obtained from the long exact sequence \eqref{eqLESRelH} for relative cohomology 
associated to the coefficient sequence $0 \to \bbZ \to \bbR \to \bbT \to 0$. 
In the right column, the homomorphism $\dd \Omega^{k-1}(M,S) \to \OmegaZ^k(M,S)$
is just the inclusion and the homomorphism $\OmegaZ^k(M,S) \to \Hf^k(M,S;\bbZ)$ 
is obtained by identifying $\Hf^k(M,S;\bbZ)$ with $\Hom(\H_k(M,S);\bbZ)$ 
and mapping $\omega \in \OmegaZ^k(M,S)$ 
to $\int_\cdot\, \omega \in \Hom(\H_k(M,S),\bbZ)$. 
The right column forms a sequence because of Stokes' theorem. In the top row,
$\dd: \Omega^{k-1}(M,S) / \OmegaZ^{k-1}(M,S) \to \dd \Omega^{k-1}(M,S)$ is surjective by definition. 
\sk

Commutativity of the top right square follows immediately from \eqref{eqn:hAdef}. 
To show commutativity of the bottom left square, 
observe that the composition of the left and bottom arrows 
is the connecting homomorphism $\beta: \H^{k-1}(M,S;\bbT) \to \H^k(M,S;\bbZ)$ in \eqref{eqLESRelH}. 
For $u \in \H^{k-1}(M,S;\bbT)$, $\beta\, u$ is defined by choosing a representative 
$\bar u \in Z^{k-1}(M,S;\bbT)$ of $u$, 
lifting $\bar u$ to $\tilde{\bar u} \in C^{k-1}(M,S;\bbR)$, taking the unique $c_u \in Z^k(M,S;\bbZ)$ 
such that $c_u = \cdel \tilde{\bar u}$ and setting $\beta\, u = [c_u]$. 
Assigning to $u$ the homomorphism $ \kappa\, u : Z_{k-1}(M,S)\to \bbT$ 
is by definition the same as restricting $\bar u$ to relative cycles. 
Hence the restriction to relative cycles of $\tilde {\bar u} \in C^{k-1}(M,S;\bbR)$ 
provides a lift $\widetilde{\kappa\, u} \in \Hom(Z_{k-1}(M,S),\bbR)$ of $\kappa\, u \in \Hom(Z_{k-1}(M,S),\bbT)$ along the 
quotient $\bbR \to \bbT$. 
Clearly $\widetilde{\kappa \, u} \circ \del = \tilde{\bar u} \circ \del$, 
therefore $\ch\, \kappa\, u = [c_u] = \beta\, u$ and the bottom left square is commutative as claimed. 
Let us  now show commutativity of the bottom right square. Given any $h \in \dH^k(M,S;\bbZ)$ 
and exploiting divisibility of $\bbT$, we choose an extension $\bar h \in C^{k-1}(M,S;\bbT)$ of $h$ 
and a lift $\tilde{\bar h} \in C^{k-1}(M,S;\bbR)$ of $\bar h$ along the quotient $\bbR \to \bbT$. 
By definition $\ch\, h = [c_h] \in \H^k(M,S;\bbZ)$, 
for $c_h \in Z^k(M,S;\bbZ)$ such that $\cdel \tilde{\bar h} = \int_\cdot\, \cu\, h - c_h$, 
i.e.\ the class in $\Hf^k(M,S;\bbZ)$ represented by $c_h$ 
is the same as the one represented by $\int_\cdot \, \cu\, h$. 
\sk

It is straightforward to prove that the middle column forms a sequence, 
i.e.\ $\ch \circ \iota = 0$. Furthermore, the first arrow is injective
by \eqref{eqn:hAdef} and the definition of $\OmegaZ^{k-1}(M,S)$, cf.\ \eqref{eqRelFormsZ}.
It remains to show that the middle row is a short exact sequence. 
First, let us notice that $\cu \circ \kappa = 0$ since $u \in \H^{k-1}(M,S;\bbT)$ 
vanishes when evaluated on relative boundaries. 
Furthermore, if $u \in \H^{k-1}(M,S;\bbT)$ is such that $\kappa\, u = 0$
then $u$ vanishes on all relative cycles, i.e.\ $u = 0$ and hence $\kappa$ is injective.
To show that $\cu$ is surjective, we exploit exactness of the bottom row.
Given any $\omega \in \OmegaZ^k(M,S)$, 
we find a preimage $[c] \in \H^k(M,S;\bbZ)$ of $\int_\cdot\, \omega \in \Hf^k(M,S;\bbZ)$. 
Hence there exists $\tilde h \in C^{k-1}(M,S;\bbR)$ 
such that $\int_\cdot\, \omega = c + \cdel \tilde h \in C^k(M,S;\bbR)$, 
where $c \in Z^k(M,S;\bbZ)$ denotes a representative of $[c]$. 
Let us denote by $h \in \Hom(Z_{k-1}(M,S),\bbT)$ the restriction of $\tilde h \mod \bbZ$. 
For each $\gamma \in C_k(M)$ we find that $h(\del \gamma) = \int_\gamma\, \omega \mod \bbZ$, 
which implies $h \in \dH^k(M,S;\bbZ)$ and $\cu\, h = \omega$ according to Definition \ref{defRelDiffChar}. 
This shows that $\cu$ is surjective and we are left with proving that $\ker \cu = \im \kappa$. 
Let $h \in \dH^k(M,S;\bbZ)$ be such that $\cu\, h = 0$. 
Then $h: Z_{k-1}(M,S) \to \bbT$ vanishes on $B_{k-1}(M,S)$ and it 
descends to $\underline h \in \Hom(\H_{k-1}(M,S),\bbT)$. 
Recalling that $\H^{k-1}(M,S;\bbT) \simeq \Hom(\H_{k-1}(M,S),\bbT)$, 
we find $u \in \H^{k-1}(M,S;\bbT)$ corresponding to $\underline h$.
The definition of $\kappa$ discussed above \eqref{eqRelInclFlat} then shows that $\kappa\, u = h$.
\end{proof}

\paragraph*{Functoriality:}
The assignment of the Abelian groups $\dH^k(M,S;\bbZ)$
to objects $(M,S)$ in the category $\Pair$ (see Subsection \ref{secHRelH})
defines a functor
\begin{equation}
\dH^k(-;\bbZ): \Pair^\op \longrightarrow \Ab~.
\end{equation}
Given any morphism $f: (M,S) \to (M^\prime,S^\prime\, )$ in $\Pair$,
we define for any $h\in \dH^k(M^\prime,S^\prime;\bbZ)$
the homomorphism $f^\ast h := h \circ f_\ast : Z_{k-1}(M,S)\to \bbT$ 
by exploiting functoriality of relative chains, see \eqref{eqChainFunctor}. 
Then $f^\ast h \in \dH^k(M,S;\bbZ)$ because of
$h(f_\ast \del \gamma) = \int_\gamma \, f^\ast \cu\, h$, 
for all $\gamma \in C_k(M)$. 
This also shows that the curvature $\cu$ is a natural transformation 
for differential characters on relative cycles. 
One can also easily show that $\kappa$, $\ch$ and $\iota$
are natural transformations for differential characters on relative cycles, i.e.\
\begin{subequations}
\begin{align}
\cu: \dH^k(-;\bbZ) \Longrightarrow \OmegaZ^k(-)~,		&&	\ch: \dH^k(-;\bbZ) \Longrightarrow \H^k(-;\bbZ)~,			\\[4pt]
\iota: \frac{\Omega^{k-1}(-)}{\OmegaZ^{k-1}(-)} \Longrightarrow \dH^k(-;\bbZ)~,
											&&	\kappa: \H^{k-1}(-;\bbT) \Longrightarrow \dH^k(-;\bbZ)~,
\end{align}
\end{subequations}
are natural transformations between functors from $\Pair^\op$ to $\Ab$.
Moreover, all the arrows in the diagram displayed in Theorem \ref{thmRelDiffChar} 
are (the components of) natural transformations,
which follows from naturality of the long exact sequence
\eqref{eqLESRelH}.

\paragraph*{The natural homomorphism $\boldsymbol I$:}
Since the quotient map $C_\sharp(M) \to C_\sharp(M,S)$ preserves 
the boundary homomorphisms $\del$, it maps cycles to relative cycles.
Precomposing differential characters on relative cycles with this quotient map
thus defines a natural homomorphism
\begin{equation}\label{eqI}
I:\dH^k(M,S;\bbZ) \longrightarrow \dH^k(M;\bbZ)
\end{equation}
that maps differential characters on relative cycles to Cheeger-Simons differential characters. 
Naturality of $I$ is expressed by commutativity of the diagram
\begin{equation}
\xymatrix{
\dH^k(M^\prime,S^\prime;\bbZ) \ar[r]^-I \ar[d]_-{f^\ast}	&	\dH^k(M^\prime;\bbZ) \ar[d]^-{f^\ast}	\\
\dH^k(M,S;\bbZ) \ar[r]_-I									&	\dH^k(M;\bbZ)
}
\end{equation}
for all morphisms $f: (M,S) \to (M^\prime,S^\prime\, )$ in $\Pair$,
which follows from the commutative diagram
\begin{equation}
\xymatrix{
Z_{k-1}(M) \ar[r] \ar[d]_{f_\ast}		&	Z_{k-1}(M,S) \ar[d]^-{f_\ast}	\\
Z_{k-1}(M^\prime\, ) \ar[r]			&	Z_{k-1}(M^\prime,S^\prime\, )
}
\end{equation}
for the quotient maps. In addition the natural diagrams
\begin{flalign}
\xymatrix{
\ar[d]_-{\cu}\dH^k(M,S;\bbZ) \ar[r]^-{I} & \dH^k(M;\bbZ) \ar[d]^-{\cu} & & \ar[d]_-{\ch}\dH^k(M,S;\bbZ) \ar[r]^-{I} & \dH^k(M;\bbZ) \ar[d]^-{\ch}\\
\Omega^k_\bbZ(M,S) \ar[r]&\Omega^k_\bbZ(M)  &&  \H^k(M,S;\bbZ) \ar[r] & \H^k(M;\bbZ)\\
 \ar[d]_-{\kappa} \H^{k-1}(M,S;\bbT) \ar[r]& \H^{k-1}(M;\bbT)\ar[d]^-{\kappa}   &&  \ar[d]_-{\iota}\frac{\Omega^{k-1}(M,S)}{\Omega^{k-1}_{\bbZ}(M,S)} \ar[r] & \frac{\Omega^{k-1}(M)}{\Omega^{k-1}_{\bbZ}(M)}\ar[d]^-{\iota}\\
\dH^k(M,S;\bbZ) \ar[r]_-{I} & \dH^k(M;\bbZ) && \dH^k(M,S;\bbZ) \ar[r]_-{I} & \dH^k(M;\bbZ)
}
\end{flalign}
commute, for all objects $(M,S)$ in $\Pair$.
The unlabeled arrows involving differential forms are induced by the inclusions $\Omega^p(M,S)\to \Omega^p(M)$
and the unlabeled arrows involving cohomology groups  are
the canonical homomorphisms from relative to absolute cohomology.

\begin{rem}\label{remRelNonSub}
In general, the homomorphism $I:\dH^k(M,S;\bbZ) \to \dH^k(M;\bbZ)$ fails to be injective. 
To illustrate this fact, consider the commutative diagram
\begin{equation}
\xymatrix{
0 \ar[r]	&	\H^{k-1}(M,S;\bbT)	\ar[r]^-\kappa \ar[d]
				&	\dH^k(M,S;\bbZ) \ar[r]^-\cu \ar[d]^-I
						&	\OmegaZ^k(M,S) \ar[d] \ar[r]			&	0	\\
0 \ar[r]	&	\H^{k-1}(M;\bbT) \ar[r]_-\kappa
				&	\dH^k(M;\bbZ) \ar[r]_-\cu
						&	\OmegaZ^k(M) \ar[r]					&	0
}
\end{equation}
with both rows being short exact sequences.
Since the right vertical arrow is injective (because it is a subset inclusion), 
the middle vertical arrow is injective if and only if so is the left one. We now construct examples of pairs $(M,S)$
for which $\H^{k-1}(M,S;\bbT) \to \H^{k-1}(M;\bbT)$ is not injective: Let $m \geq 2 $ and $k \in \{2,\ldots,m+1\}$. 
Consider $M = \bbR^m$ and $S = M \setminus (\bbR^{m-k+1} \times \bbB^{k-1})$, 
where $\bbB^p$ is a closed $p$-ball in $\bbR^p$. Observe that 
$M$ is homotopic to a point and $S$ is homotopic to the $(k-2)$-sphere $\bbS^{k-2}$.
Using the long exact sequence relating the relative cohomology of the 
pair $(M,S)$ to the cohomologies of $M$ and $S$, cf.\ \cite[p.\ 200]{Hatcher},
we obtain the exact sequence
\begin{equation}
\xymatrix{
\H^{k-2}(M;\bbT) \ar[r]
						&	\H^{k-2}(S;\bbT) \ar[r]
									&	\H^{k-1}(M,S;\bbT) \ar[r]
												&	\H^{k-1}(M;\bbT)~.
}
\end{equation}
Since $\H^{k-1}(M;\bbT)$ is trivial by construction, $\H^{k-1}(M,S;\bbT)$ can be computed 
as the quotient of $\H^{k-2}(S;\bbT)$ by the image of $\H^{k-2}(M;\bbT)$. 
Specifically, one finds that $\bbT\simeq \H^{k-1}(M,S;\bbT) \to \H^{k-1}(M;\bbT) \simeq 0$
is not injective. 
\end{rem}

\begin{ex}\label{ex:parallellinebundles}
As in Example~\ref{ex:linebundles}, the relative differential cohomology group 
$\dH^2(M,S;\bbZ)$ in degree $k=2$ has an immediate geometric interpretation. It is 
canonically isomorphic (by the holonomy map) to the group of isomorphism classes of triples $(L,\nabla,\sigma)$ 
consisting of a  Hermitean line bundle $L \to M$ with connection 
$\nabla$ and a $\nabla$-parallel section $\sigma:S \to L|_S$ of the restricted bundle
$L\vert_S \to S$. The homomorphism $I: \dH^2(M,S;\bbZ) \to \dH^2(M;\bbZ)$ is 
then induced by the forgetful map from triples $(L,\nabla,\sigma)$ to pairs $(L,\nabla)$ 
that ignores the section, see \cite{BB} for details. 
Since there may be inequivalent parallel sections on the same pair $(L,\nabla)$,
this gives a geometric interpretation of the non-injectivity of $I$, cf.\  Remark \ref{remRelNonSub}.
\end{ex}

\paragraph*{Properly embedded $\boldsymbol{S\subseteq M}$:}
In the following we shall specialize to the case where $S \subseteq M$ 
is a properly embedded submanifold (possibly with boundary).
In this case, differential forms on $S$ can be extended to differential forms on $M$, 
see e.g.\ \cite[Problem 10-9]{Lee}. 
In particular, we obtain the short exact sequence of de Rham complexes
\begin{equation}
\xymatrix{
0 \ar[r]	&	\Omega^\sharp(M,S) \ar[r]	&	\Omega^\sharp(M) \ar[r]	&	\Omega^\sharp(S) \ar[r]	&	0~.
}
\end{equation}
Regarding differential forms as cochains via integration 
over smooth singular chains, we obtain a commutative diagram of cochain 
complexes of Abelian groups 
\begin{equation}
\xymatrix{
0 \ar[r]	&	\Omega^\sharp(M,S) \ar[r] \ar[d]_{\int_\cdot}
					&	\Omega^\sharp(M) \ar[r] \ar[d]_{\int_\cdot}
								&	\Omega^\sharp(S) \ar[r] \ar[d]_{\int_\cdot}	&	0		\\
0 \ar[r]	&	C^\sharp(M,S;\bbR) \ar[r]
					&	C^\sharp(M;\bbR) \ar[r]
								&	C^\sharp(S;\bbR)\ar[r]						&	0
}
\end{equation}
and the corresponding commutative diagram of the long exact cohomology sequences 
\begin{equation}
\xymatrix@C=12pt{
\cdots \ar[r] &	\H^{k-1}_\dR(M) \ar[r] \ar[d]_\simeq
						&	\H^{k-1}_\dR(S) \ar[r] \ar[d]_\simeq
									&	\H^k_\dR(M,S) \ar[r] \ar[d]
												&	\H^k_\dR(M) \ar[r] \ar[d]_\simeq
															&	\H^k_\dR(S) \ar[r] \ar[d]_\simeq
																		&	\cdots						\\
\cdots \ar[r]	&	\H^{k-1}(M;\bbR) \ar[r]
						&	\H^{k-1}(S;\bbR) \ar[r]
									&	\H^k(M,S;\bbR) \ar[r]
												&	\H^k(M;\bbR) \ar[r]
															&	\H^k(S;\bbR) \ar[r]
																		&	\cdots\,
}
\end{equation}
By de Rham's theorem,\footnote{The de Rham theorem also holds for manifolds with boundary; 
the well-known proof due to A.~Weil using \v{C}ech-de Rham and \v{C}ech-singular double complexes 
can easily be adapted to the case of manifolds with boundary. 
Alternatively, one may also argue with homotopy invariance of de Rham cohomology 
using the fact that the inclusion $M \backslash \partial M \hookrightarrow M$ is a homotopy equivalence.} 
the vertical arrows between absolute cohomology groups are isomorphisms
and hence by the five lemma we conclude that also $\H^k_\dR(M,S) \to \H^k(M,S;\bbR)$ is an isomorphism.
This provides us with a relative version of de Rham's theorem for the case of $S\subseteq M$ being
properly embedded. Using this result we can refine the statement 
of Theorem~\ref{thmRelDiffChar} to obtain the full commutative diagram for relative differential cohomology.
\begin{theo}\label{thmRelDiffCharRefined}
Let $M$ be a manifold and $S \subseteq M$ a properly embedded submanifold (possibly with boundary). 
Then the diagram
\begin{equation}\label{eqRelDiffCharDiaAlt}
\xymatrix{
		&	0 \ar[d]	&	0 \ar[d]	&	0 \ar[d]																\\
0 \ar[r]	&	\frac{\H^{k-1}(M,S;\bbR)}{\Hf^{k-1}(M,S;\bbZ)} \ar[r] \ar[d]
					&	\frac{\Omega^{k-1}(M,S)}{\OmegaZ^{k-1}(M,S)} \ar[r]^-\dd \ar[d]_-\iota
								&	\dd \Omega^{k-1}(M,S) \ar[r] \ar[d]					&	0			\\
0 \ar[r]	&	\H^{k-1}(M,S;\bbT) \ar[r]^-\kappa \ar[d]	
					&	\dH^k(M,S;\bbZ) \ar[r]^-\cu \ar[d]_-\ch
								&	\OmegaZ^k(M,S) \ar[r] \ar[d]						&	0			\\
0 \ar[r]	&	\Ht^k(M,S;\bbZ)	 \ar[r] \ar[d]
					&	\H^k(M,S;\bbZ) \ar[r] \ar[d]
								&	\Hf^k(M,S;\bbZ) \ar[r] \ar[d]							&	0			\\
		&	0		&	0		&	0
}
\end{equation}
commutes and its rows and columns are short exact sequences.
\end{theo}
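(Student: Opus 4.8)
The plan is to build on Theorem~\ref{thmRelDiffChar}, which already supplies commutativity of all squares appearing there, exactness of the left column, the middle row and the bottom row, injectivity of the first maps in the middle and right columns, and surjectivity of the top-row map. What remains is to upgrade the three partial sequences—the top row, the right column and the middle column—to short exact sequences, and to furnish and check the one genuinely new square, the top-left one. The single new input available in the properly embedded setting is the relative de~Rham theorem $\H^\sharp_\dR(M,S)\simeq\H^\sharp(M,S;\bbR)$ established just above, whose proof is exactly where proper embeddedness enters, through extendability of forms from $S$ to $M$.

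First I would extract from it the relative analogue of the absolute isomorphism underlying \eqref{eqDiffCharDia},
\begin{equation*}
\OmegaZ^k(M,S)\big/\dd\,\Omega^{k-1}(M,S)\;\xrightarrow{\ \simeq\ }\;\Hf^k(M,S;\bbZ)~,\qquad [\omega]\longmapsto\Big[\int_\cdot\,\omega\Big]~.
\end{equation*}
Every $\omega\in\OmegaZ^k(M,S)$ is closed (integral periods over relative cycles force $\dd\omega=0$ by the usual scaling argument), hence defines a class in $\H^k_\dR(M,S)\simeq\H^k(M,S;\bbR)$ which is integral and lands in $\Hf^k(M,S;\bbZ)\simeq\Hom(\H_k(M,S),\bbZ)$; surjectivity holds since an integral class has a de~Rham representative with integral periods, and the kernel is precisely $\dd\,\Omega^{k-1}(M,S)$ because a vanishing free class gives $[\omega]=0$ in $\H^k_\dR(M,S)$, i.e.\ $\omega$ is relatively exact. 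This makes the right column short exact. The same argument in degree $k-1$, together with surjectivity of $\dd$ on $\Omega^{k-1}(M,S)/\OmegaZ^{k-1}(M,S)$, identifies $\ker\dd$ with closed relative $(k-1)$-forms modulo $\OmegaZ^{k-1}(M,S)$, hence with $\H^{k-1}(M,S;\bbR)/\Hf^{k-1}(M,S;\bbZ)$; this both defines the new top-left horizontal map and makes the top row short exact. A computation as in \eqref{eqn:hAdef} gives $\kappa\,\pi(v)=h_A=\iota\,[A]$ for $A$ a closed de~Rham representative of $v\in\H^{k-1}(M,S;\bbR)$, where $\pi\colon\H^{k-1}(M,S;\bbR)\to\H^{k-1}(M,S;\bbT)$ is the reduction, yielding commutativity of the top-left square.

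It then remains to promote the middle column to a short exact sequence, i.e.\ to show $\ch$ surjective and $\ker\ch=\im\iota$ (we already have $\iota$ injective and $\ch\circ\iota=0$). For surjectivity I would chase: given $[c]\in\H^k(M,S;\bbZ)$, use surjectivity of the right column to pick $\omega\in\OmegaZ^k(M,S)$ with the same image in $\Hf^k(M,S;\bbZ)$, lift it to $h'$ by surjectivity of $\cu$ in the middle row, observe $\ch\,h'-[c]\in\Ht^k(M,S;\bbZ)=\im\beta$ from the bottom row, write it as $\beta\,u=\ch\,\kappa\,u$ using the bottom-left square, and set $h=h'-\kappa\,u$. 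For $\ker\ch\subseteq\im\iota$, let $\ch\,h=0$; then $\int_\cdot\,\cu\,h$ becomes a coboundary in $C^k(M,S;\bbR)$, so the class of $\cu\,h$ in $\H^k(M,S;\bbR)$ vanishes and $[\cu\,h]=0$ in $\H^k_\dR(M,S)$ by relative de~Rham, whence $\cu\,h=\dd A$ for some $A\in\Omega^{k-1}(M,S)$. Now $h-h_A\in\ker\cu=\im\kappa$ by the middle row, say $h-h_A=\kappa\,u$; since $\ch(h-h_A)=0$ we get $u\in\ker\beta=\im\pi$, and the commutative top-left square forces $\kappa\,u\in\im\iota$, so $h\in\im\iota$.

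I expect the real content to sit entirely in the relative de~Rham isomorphism of the second paragraph: the diagram chases are formal and identical to the absolute case, whereas the passage $\H^\sharp_\dR(M,S)\simeq\H^\sharp(M,S;\bbR)$—and therefore every exactness statement downstream—is exactly where the hypothesis that $S\subseteq M$ is \emph{properly embedded} is used, via extendability of differential forms. The only mildly delicate bookkeeping is verifying that the new top-left map is well defined (exact forms and integral-class forms have integral periods) and compatible with $\kappa$ and $\iota$, but this is precisely the verification that closes the absolute diagram.
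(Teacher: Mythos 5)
Your proposal is correct and follows essentially the same route as the paper: the only new input is the relative de~Rham isomorphism $\H^\sharp_\dR(M,S)\simeq\H^\sharp(M,S;\bbR)$ (valid for properly embedded $S$), which yields exactness of the right column and the top row together with the new top-left arrow, after which exactness of the middle column follows from the rest of the diagram. The paper dispatches that last step in one sentence while you spell out the diagram chase explicitly, but this is the same argument.
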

\begin{rem}
This theorem is proven in \cite{BB} for closed submanifolds $S \subseteq M$.
The proof carries over to the more general setting of properly embedded submanifolds (possibly with boundary).
For completeness, we briefly outline the proof.
\end{rem}
\begin{proof}
We first show that the right column is a short exact sequence. 
For this, we only have to prove that the morphism 
$\OmegaZ^k(M,S) \to \Hf^k(M,S;\bbZ)$ is surjective and that its kernel coincides 
with $\dd \Omega^{k-1}(M,S)$. Both statements follow from de Rham's 
theorem for relative cohomology
by taking into account the inclusion $\Hf^k(M,S;\bbZ) \subseteq \H^k(M,S;\bbR)$. 
\sk

Using this result we can also complete the diagram in Theorem \ref{thmRelDiffChar} by defining the missing horizontal
arrow in the top row: With $\Omega^{p}_\dd(M,S)$ denoting the closed relative $p$-forms we have
\begin{equation}
\frac{\H^{k-1}(M,S;\bbR)}{\Hf^{k-1}(M,S;\bbZ)} \simeq \frac{\Omega^{k-1}_\dd(M,S)}{\OmegaZ^{k-1}(M,S)} 
\subseteq \frac{\Omega^{k-1}(M,S)}{\OmegaZ^{k-1}(M,S)}~. 
\end{equation}
This map is injective and its image agrees with the 
kernel of $\dd: \Omega^{k-1}(M,S) / \OmegaZ^{k-1}(M,S) \to \dd \Omega^{k-1}(M,S)$.
Hence the top row is a short exact sequence. 
To show that the top left square commutes, it is sufficient to represent 
cohomology classes in $\H^{k-1}(M,S;\bbR)$ 
by means of closed $(k-1)$-forms according to the isomorphism displayed above. 
Exactness of the middle column follows from exactness of the 
other sequences. 
\end{proof}

\begin{rem}
In the present case of properly embedded submanifolds $S\subseteq M$ 
we can strengthen Remark~\ref{remRelNonSub} on the
non-injectivity of the homomorphism $I:\dH^k(M,S;\bbZ) \to \dH^k(M;\bbZ)$ 
by fitting it into a long exact sequence connecting absolute 
and relative (differential) cohomology groups.
By similar arguments as in \cite[Part II, Section 3.3.4]{BB}, 
there exists a long exact sequence
\begin{equation}\label{eq:long_ex_sequ_par}
\xymatrix@C=44pt{
\cdots \ar[r]
& \H^{k-2}(M,S;\bbT) \ar[r] & \H^{k-2}(M;\bbT) \ar[r] & \H^{k-2}(S;\bbT) \ar`[r]`d[lll]`[llld]`[llldr][lld]^(0.3){\kappa \circ \beta} &  \\  
& \dH^k(M,S;\bbZ) \ar[r]^-{I} & \dH^k(M;\bbZ) \ar^{i_S^*}[r] & \dH^k(S;\bbZ) \ar`[r]`d[lll]`[llld]`[llldr][lld]^(0.3){\beta \circ \ch} & \\
& \H^{k+1}(M,S;\bbZ) \ar[r] & \H^{k+1}(M;\bbZ) \ar[r] & \H^{k+1}(S;\bbZ) \ar[r] &  \cdots
} 
\end{equation}
of Abelian groups, where $\beta: \H^k(S;G) \to \H^{k+1}(M,S;G)$ denotes the connecting 
homomorphism (for $G = \bbT$ or $G=\bbZ$).
From \eqref{eq:long_ex_sequ_par} it follows that $h \in \dH^k(M;\bbZ)$ descends to a differential character 
on relative cycles if and only if it vanishes upon pull-back to $S$.  
\end{rem}

\subsection{Excision theorem}\label{subRelDiffCharNat}
We now prove a version of the excision theorem for differential characters
on relative cycles. This result will be used later in Section \ref{secCDiffChar} 
to define the push-forward of compactly supported differential characters and 
hence to understand their functorial behavior.

\begin{theo}\label{thmDiffCharExcision}
Let $M$ be a manifold. Consider $O \subseteq M$ open and 
$C \subseteq M$ closed such that $C \subseteq O$. 
Then the morphism $i : (O,O \setminus C) \to (M,M \setminus C)$
 in $\Pair$ induces an isomorphism 
\begin{equation}
i^\ast : \dH^k(M,M \setminus C;\bbZ) \longrightarrow \dH^k(O,O \setminus C;\bbZ)~.
\end{equation}
\end{theo}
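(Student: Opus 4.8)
The plan is to compare the two pairs through the middle row of the exact diagram of Theorem~\ref{thmRelDiffChar}, which is a short exact sequence for \emph{any} submanifold and hence applies both to $S=M\setminus C$ (open in $M$) and to $S=O\setminus C$ (open in $O$). Since $\cu$ and $\kappa$ are natural transformations, the morphism $i\colon(O,O\setminus C)\to(M,M\setminus C)$ in $\Pair$ induces a commutative ladder
\begin{equation*}
\xymatrix@C=14pt{
0 \ar[r] & \H^{k-1}(M,M\setminus C;\bbT) \ar[r]^-\kappa \ar[d]_-{i^\ast} & \dH^k(M,M\setminus C;\bbZ) \ar[r]^-\cu \ar[d]_-{i^\ast} & \OmegaZ^k(M,M\setminus C) \ar[r] \ar[d]^-{i^\ast} & 0 \\
0 \ar[r] & \H^{k-1}(O,O\setminus C;\bbT) \ar[r]_-\kappa & \dH^k(O,O\setminus C;\bbZ) \ar[r]_-\cu & \OmegaZ^k(O,O\setminus C) \ar[r] & 0
}
\end{equation*}
with exact rows. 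By the short five lemma it then suffices to prove that the two outer vertical arrows are isomorphisms, and the claim for $\dH^k$ follows at once.

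For the left-hand arrow I would invoke the ordinary excision theorem for smooth singular cohomology (see, e.g., \cite{Hatcher}). Setting $Z:=M\setminus O$, which is closed because $O$ is open, the hypothesis $C\subseteq O$ is precisely the statement $\overline Z=Z\subseteq M\setminus C=\mathrm{int}(M\setminus C)$. Excising $Z$ from the pair $(M,M\setminus C)$ therefore yields an isomorphism onto $(M\setminus Z,(M\setminus C)\setminus Z)=(O,O\setminus C)$ induced by $i$, with $\bbT$-coefficients; the barycentric subdivision proof of excision carries over verbatim to smooth singular chains.

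The right-hand arrow is the heart of the matter. First note that, since $M\setminus C$ is open, a form lies in $\Omega^k(M,M\setminus C)$ exactly when $\supp\omega\subseteq C$; because $C\subseteq O$ is closed in $M$, extension by zero identifies $\Omega^k(M,M\setminus C)$ with $\Omega^k(O,O\setminus C)$, the restriction $i^\ast$ being its inverse. It then remains to match the integral-period subgroups. One inclusion is immediate: if $\omega$ has integral periods then so does $i^\ast\omega$, since $\int_{z'} i^\ast\omega=\int_{i_\ast z'}\omega\in\bbZ$ for every relative cycle $z'\in Z_k(O,O\setminus C)$. For the converse I would again use the subdivision underlying excision: given $z\in Z_k(M,M\setminus C)$, iterated barycentric subdivision adapted to the open cover $\{O,M\setminus C\}$ of $M$ writes $\mathrm{Sd}^N z=a+b$ with $a\in C_k(O)$ and $b\in C_k(M\setminus C)$, and a short boundary computation (using $\partial z\in C_{k-1}(M\setminus C)$) shows $\partial a\in C_{k-1}(O)\cap C_{k-1}(M\setminus C)=C_{k-1}(O\setminus C)$, so that $a\in Z_k(O,O\setminus C)$. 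Since integration of forms is invariant under subdivision and $\omega\vert_{M\setminus C}=0$ annihilates the $b$-part, one obtains $\int_z\omega=\int_a i^\ast\omega\in\bbZ$ whenever $i^\ast\omega$ has integral periods. Hence $i^\ast\colon\OmegaZ^k(M,M\setminus C)\to\OmegaZ^k(O,O\setminus C)$ is an isomorphism.

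The main obstacle is precisely this last period-matching step. The groups $\OmegaZ^k$ are cut out by an integrality condition ranging over \emph{all} relative cycles rather than over homology classes, so one cannot merely quote homological excision but must descend to the chain level and simultaneously exploit the subdivision-invariance of $\int$ and the support condition $\supp\omega\subseteq C\subseteq O$. Everything else is formal: the short five lemma together with standard excision for singular cohomology.
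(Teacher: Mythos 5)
Your proposal is correct and follows essentially the same route as the paper: the five lemma applied to the naturality ladder for the middle row of Theorem~\ref{thmRelDiffChar}, ordinary excision for the $\H^{k-1}(-;\bbT)$ term, extension by zero for $\Omega^k(-,-\setminus C)$, and an iterated-subdivision decomposition $S^j\tilde z = a+b$ adapted to the cover $\{O,M\setminus C\}$ to match the integral-period subgroups. The only (harmless) variation is in the last bookkeeping step, where you invoke the invariance of $\int$ under barycentric subdivision directly, while the paper instead writes $\tilde z = a + b - D_j\del\tilde z - \del D_j\tilde z$ using the chain homotopy $D_j$ and concludes via Stokes' theorem and closedness of $\omega$.
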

\begin{proof}
Consider the central row of diagram \eqref{eqRelDiffCharDia} 
and recall that it is a natural short exact sequence. 
Hence the morphism $i: (O,O \setminus C) \to (M,M \setminus C)$ in $\Pair$ 
induces the commutative diagram 
\begin{equation}
\xymatrix{
0 \ar[r]	&	\H^{k-1}(M,M \setminus C;\bbT) \ar[r]^-\kappa \ar[d]_-{i^\ast}
					&	\dH^k(M,M \setminus C;\bbZ) \ar[r]^-\cu \ar[d]_-{i^\ast}
								&	\OmegaZ^k(M,M \setminus C) \ar[r] \ar[d]_-{i^\ast}		&	0	\\
0 \ar[r]	&	\H^{k-1}(O,O \setminus C;\bbT) \ar[r]_-\kappa
					&	\dH^k(O,O \setminus C;\bbZ) \ar[r]_-\cu
								&	\OmegaZ^k(O,O \setminus C) \ar[r]					&	0
}
\end{equation}
Excision for ordinary cohomology implies that the left vertical arrow is an isomorphism.
By the five lemma, it is sufficient to show that also the right vertical arrow is an isomorphism 
in order to complete the proof. 
\sk

We will now construct an inverse of the homomorphism 
$i^\ast: \OmegaZ^k(M,M \setminus C) \to \OmegaZ^k(O,O \setminus C)$. 
First, notice that $i^\ast: \Omega^k(M,M \setminus C) \to \Omega^k(O,O \setminus C)$ 
(without the restriction to integral periods) is an isomorphism because 
forms on $O$ that vanish on $O\setminus C$ can be extended by zero.
It remains to prove that the extension by zero homomorphism 
$(i^\ast)^{-1}: \Omega^k(O,O \setminus C) \to \Omega^k(M,M \setminus C)$ 
preserves integral periods. Let $\omega \in \OmegaZ^k(O,O \setminus C)$ and $z \in Z_k(M, M \setminus C)$. Choosing a representative $\tilde z \in C_k(M)$ of $z$, 
we find $\del \tilde z \in C_{k-1}(M \setminus C)$. 
Because $\{O,M \setminus C\}$ is an open cover of $M$, there exists an integer $j \geq 0$ 
such that the $j$-th iterated subdivision $S^j \tilde z$ is a combination 
of simplices which are supported either in $O$ or in $M \setminus C$. 
Let $a \in C_k(O)$ denote the combination of those simplices in 
$S^j \tilde z$ whose support intersects $C$. 
By construction $b := S^j \tilde z - a \in C_k(M \setminus C)$ 
and $\del a = S^j \del \tilde z - \del b \in C_{k-1}(O \setminus C)$.
In particular, $a$ represents an element of $Z_k(O,O \setminus C)$. 
Recalling that there exists a natural chain homotopy $D_j: C_p(M) \to C_{p+1}(M)$ 
between identity and $j$-th iterated subdivision, 
we conclude that $\tilde z = a + b - D_j \del \tilde z - \del D_j \tilde z$. 
By naturality, the chain homotopy $D_j$ preserves the supports of chains, 
in particular $D_j \del \tilde z \in C_k(M \setminus C)$. 
Since $\omega$ is closed, so is its extension by zero $(i^\ast)^{-1} \omega$.
This implies that
\begin{equation}
\int_z\, (i^\ast)^{-1} \omega
= \int_a\, \omega + \int_{b - D_j \del \tilde z} \, (i^\ast)^{-1} \omega - \int_{D_j \tilde z} \, \dd (i^\ast)^{-1} \omega
= \int_a\, \omega \in \bbZ~,
\end{equation}
where we have also used Stokes' theorem.
\end{proof}

\subsection{Module structure}\label{subRelDiffCharModule}
We show that relative differential cohomology $\dH^\sharp(M,S;\bbZ)$ is a module 
over the differential cohomology ring $\dH^\sharp(M;\bbZ)$, see Subsection \ref{subsecCS}.
Let $(M,S)$ be an object in $\Pair$. In the following we shall explain how
\eqref{eqDiffCharRing} may be used to define a bihomomorphism
\begin{equation}\label{eqRelDiffCharModule}
\cdot \,:\, \dH^k(M;\bbZ) \times \dH^l(M,S;\bbZ)	 \longrightarrow \dH^{k + l}(M,S;\bbZ)~, \qquad (h,h'\,) \longmapsto h \cdot h'\,~,
\end{equation}
which provides us with the desired module structure.
\sk

Let $h \in \dH^k(M;\bbZ)$ and $h'\, \in \dH^l(M,S;\bbZ)$.
Let $\tilde h \in C^{k-1}(M;\bbR)$ be such that $\tilde h \mod \bbZ = h$ on $Z_{k-1}(M)$ 
and $c_h \in Z^k(M;\bbZ)$ such that $\cdel \tilde h = \int_\cdot\, \cu\, h - c_h \in C^k(M;\bbR)$. 
The pair $(\tilde h,c_h)$ is unique up to a 
term of the form $(\Delta + \cdel \Gamma,- \cdel \Delta)$, 
where $\Delta \in C^{k-1}(M;\bbZ)$ and $\Gamma \in C^{k-2}(M;\bbR)$. 
Similarly, let $\tilde h'\, \in C^{l-1}(M,S;\bbZ)$ be such that $\tilde h'\, \mod \bbZ = h'\,$ 
on $Z_{l-1}(M,S)$ and $c_{h'} \in Z^l(M,S;\bbZ)$ 
such that $\cdel \tilde h'\, = \int_\cdot\, \cu\, h'\, - c_{h'} \in C^l(M,S;\bbR)$. 
The pair $(\tilde h'\,, c_{h'})$  is unique up to a term of 
the form $(\Delta + \cdel \Gamma,-\cdel\Delta)$, 
where $\Delta \in C^{l-1}(M,S;\bbZ)$ and $\Gamma \in C^{l-2}(M,S;\bbR)$. 
\sk

As in Subsection \ref{subsecCS}, we choose a family of natural cochain homotopies
$B: \Omega^k(M) \times \Omega^l(M) \to C^{k+l-1}(M;\bbR)$,
for all $k,l\geq 0$, between the wedge product for differential forms 
and the cup product for the corresponding cochains 
such that $B(\omega,\theta) = (-1)^{k \,l}\, B(\theta,\omega)$, 
for all $\omega \in \Omega^k(M)$ and $\theta \in \Omega^l(M)$. 
These cochain homotopies imply the identities
\begin{equation}\label{eqWedgeCupHomotopy}
\int_\cdot\, \omega \wedge \theta - \int_\cdot\, \omega \smile \int_\cdot\, \theta 
= \cdel B(\omega,\theta) + B(\dd \omega,\theta) + (-1)^k\, B(\omega,\dd \theta)~,
\end{equation}
for all $\omega \in \Omega^k(M)$ and $\theta \in \Omega^l(M)$.
Recall that an example of such a family of cochain homotopies is given 
in \cite[Section 1]{CS}. It is obtained by iterated subdivisions,
the natural chain homotopy between subdivision and identity,
and by exploiting a result due to Kervaire~\cite{Kervaire}. 
With this choice one explicitly observes that $B$ preserves supports, i.e.\ 
$B(\omega,\theta) \in C^{k+l-1}(M,S)$ where $S= M\setminus (\supp\, \omega \cap \supp\,  \theta)$
is the complement of the intersection of the supports of $\omega$ and $\theta$.
More abstractly, this fact follows from naturality of $B$.
As stressed in~\cite[Section 3.2]{HS}, two different choices of $B$ are naturally cochain homotopic. 
This result is crucial in showing that the ring structure on differential characters 
does not depend on the choice of a natural cochain homotopy $B$. Similarly, 
it will allow us to show that the module structure of differential characters on relative cycles
over the ring of differential characters does not depend on the choice of $B$. 
\sk

Recalling that the cup product between cochains preserves supports, 
we define $h \cdot h'\,$ in \eqref{eqRelDiffCharModule} as the homomorphism 
$Z_{k + l - 1}(M,S)\to\bbT$  given by
\begin{subequations}\label{eqRelModuleAction}
\begin{align}\label{eqRelModuleAction1}
h \cdot h'\,		& := \tilde h \smile \int_\cdot \, \cu\, h'\, + (-1)^k\, c_h \smile \tilde h'\, + B(\cu\, h,\cu\, h'\,) \mod \bbZ	\\[4pt]
			& \phantom{:}= \tilde h \smile c_{h'} + (-1)^k\, \int_\cdot\, \cu\, h \smile \tilde h'\, + B(\cu\, h,\cu\, h'\,)
					\mod \bbZ~,
\label{eqRelModuleAction2}
\end{align}
\end{subequations}
where $(\tilde h,c_h)$ and $(\tilde h'\,,c_{h'})$ were introduced above.
The two expressions in \eqref{eqRelModuleAction} differ by the exact term $(-1)^{k+1}\, \cdel(\tilde h \smile \tilde h'\,) \in B^{k+l-1}(M,S;\bbR)$, 
which is of course trivial when evaluated on $Z_{k + l - 1}(M,S)$.
While \eqref{eqRelModuleAction1} shows that $h \cdot h'\,$ does not depend on the choice of $(\tilde h'\,,c_{h'})$, the expression
\eqref{eqRelModuleAction2} shows independence with respect to the choice of $(\tilde h,c_h)$. 
The fact that different choices of $B$ are naturally cochain homotopic entails that 
$h \cdot h'\,$ does not depend on this choice as well.
It is easy to check that $h \cdot h'\,$ as defined above is an element of $\dH^{k+l}(M,S;\bbZ)$: using \eqref{eqWedgeCupHomotopy} one finds that
\begin{equation}
(h \cdot h'\,)(\del \gamma) = \int_\gamma \, \cu\, h \wedge \cu\, h'\, \mod \bbZ~,
\end{equation}
for all $\gamma \in C_{k+l}(M)$.
Because the formula for the module structure on relative differential cohomology 
is exactly the same as the one for the ring structure on differential cohomology, 
one can easily adapt the arguments in \cite[Theorem 1.11]{CS} to the present case
and show that  \eqref{eqRelDiffCharModule} structures
$\dH^\sharp(M,S;\bbZ)$ into a module over $\dH^\sharp(M;\bbZ)$. 
For an alternative approach see \cite[Part II, Section 4.2.6]{BB}. 
Directly from \eqref{eqRelModuleAction}, one can prove the identities
\begin{subequations}\label{eqRelCompatibility}
\begin{align}
\cu(h \cdot h'\,)			&	= \cu\, h \wedge \cu\, h'\,~,
								&	\ch(h \cdot h'\,)			&	= \ch\, h \smile \ch\, h'\,~,		\\[4pt]
h \cdot \kappa\, \upsilon	&	= (-1)^k \,\kappa(\ch\, h \smile \upsilon)~,
								&	h \cdot \iota\, [\alpha]	&	= (-1)^k \, \iota\, [\cu\, h \wedge \alpha]~,	\\[4pt]
\kappa\, u \cdot h'\,		&	= \kappa(u \smile \ch\, h'\,)~,
								&	\iota\, [A] \cdot h'\,		&	= \iota\, [A \wedge \cu\, h'\,]~,
\end{align}
\end{subequations}
for all $h {\in} \dH^k(M;\bbZ)$, $h'\, {\in} \dH^l(M,S;\bbZ)$, $[A] \in \Omega^{k-1}(M) / \Omega^{k-1}_\bbZ(M)$,
$[\alpha] \in \Omega^{l-1}(M,S) / \Omega^{l-1}_\bbZ(M,S)$, $u \in \H^{k-1}(M;\bbT)$ and
$\upsilon \in \H^{l-1}(M,S;\bbT)$,
which express the compatibility of the module structure for differential characters on relative cycles
with respect to the natural homomorphisms $\cu$, $\ch$, $\iota$ and $\kappa$.
\sk

We conclude by noticing that the $\dH^\sharp(M;\bbZ)$-module structure on
$\dH^\sharp(M,S;\bbZ)$ is natural with respect to morphisms 
$f:(M,S) \to (M^\prime,S^\prime\, )$ in the category $\Pair$, i.e.\
the pull-back $f^\ast: \dH^k(M^\prime,S^\prime;\bbZ) \to \dH^k(M,S;\bbZ)$ is a module homomorphism 
with underlying ring homomorphism $f^\ast: \dH^k(M^\prime;\bbZ) \to \dH^k(M;\bbZ)$. 
For this, let $h \in \dH^k(M^\prime;\bbZ)$ and $h'\, \in \dH^l(M^\prime,S^\prime;\bbZ)$ 
and choose cochains $\tilde h \in C^{k-1}(M^\prime;\bbR)$ and $\tilde h'\, \in C^{l-1}(M^\prime,S^\prime;\bbR)$ 
which extend and lift $h$ and $h'\,$, respectively. Take
$c_h \in Z^k(M^\prime;\bbZ)$ and $c_{h'} \in Z^l(M^\prime,S^\prime;\bbZ)$ 
such that $\cdel \tilde h = \int_\cdot\, \cu\, h - c_h$ and $\cdel \tilde h'\, = \int_\cdot\, \cu\, h'\, - c_{h'}$. 
Then $f^\ast \tilde h \in C^{k-1}(M;\bbR)$ and $f^\ast \tilde h'\, \in C^{l-1}(M,S;\bbR)$ 
extend and lift $f^\ast h\in \dH^k(M;\bbZ)$ and $f^\ast h'\, \in \dH^l(M,S;\bbZ)$, respectively. 
Furthermore, $f^\ast c_h \in Z^k(M;\bbZ)$ and $f^\ast c_{h'} \in Z^l(M,S;\bbZ)$ 
satisfy $\cdel f^\ast \tilde h = \int_\cdot\, \cu\, f^\ast h - f^\ast c_h$ 
and $\cdel f^\ast \tilde h'\, = \int_\cdot\, \cu\, f^\ast h'\, - f^\ast c_{h'}$. 
Computing $f^\ast h \cdot f^\ast h'\,$ using \eqref{eqRelModuleAction},
we conclude that $f^\ast h \cdot f^\ast h'\, = f^\ast(h \cdot h'\,)$
because the cup product $\smile$ and the cochain homotopy $B$ are natural.

\begin{rem}\label{remRelDiffCharMultiplication}
We observe that \eqref{eqRelModuleAction} also makes sense 
when both factors are differential characters on relative cycles: For
$S$, $S^\prime$ and $S \cup S^\prime$ submanifolds (possibly with boundary) of $M$, 
the same arguments would show that 
\begin{equation}\label{eqRelDiffCharProd}
\cdot \, :\, \dH^k(M,S;\bbZ) \times \dH^l(M,S^\prime;\bbZ)
\longrightarrow \dH^{k+l}(M,S \cup S^\prime;\bbZ)~, \qquad 
(h,h'\,) \longmapsto h \cdot h'\,
\end{equation}
is a well-defined bihomomorphism.
For $S = S^\prime$, \eqref{eqRelDiffCharProd} defines
a graded-commutative ring structure (without unit if $S$ is non-empty)
on differential characters on relative cycles. 
This ring structure coincides with the usual ring structure on differential characters 
for $S = S^\prime = \emptyset$. Furthermore, when $S \subseteq S^\prime$, 
we can interpret $\dH^l(M,S^\prime;\bbZ)$ as a module 
over the ring $\dH^k(M,S;\bbZ)$ (without unit for $S \neq \emptyset$). 
When $S = \emptyset$, this coincides
with the module structure introduced in \eqref{eqRelDiffCharModule}. 
\end{rem}


\section{Differential characters with compact support}\label{secCDiffChar}
To introduce differential characters with compact support, 
we follow an approach similar to the one used in Subsection \ref{secHc}
to define ordinary cohomology with compact support.
Let $M$ be a manifold. Consider its associated directed set $\mcK_M$ of compact subsets $K\subseteq M$
and introduce the functor $(M,M \setminus -): \mcK_M \to \Pair^\op$ as in \eqref{eqCompSubsetToPairFunctor}.
Composing this functor with the relative differential cohomology functor 
$\dH^k(-;\bbZ): \Pair^\op \to \Ab$ results in the functor
\begin{equation}\label{eqn:dHdiagram}
\dH^k(M,M \setminus -;\bbZ): \mcK_M \longrightarrow \Ab~.
\end{equation}
\begin{defi}\label{defCDiffChar}
The Abelian group of differential characters with compact support 
is the colimit
\begin{equation}\label{eqCDiffChar}
\dH^k_{\rm c}(M;\bbZ) := \colim\big(\dH^k(M,M \setminus -;\bbZ): \mcK_M \to \Ab\big)~
\end{equation}
of the functor \eqref{eqn:dHdiagram} over the directed set $\mcK_M $.
\end{defi}

As for ordinary cohomology (cf.\ Remark \ref{remAlternativeCoho}),
the Abelian groups $\dH^k_{\rm c}(M;\bbZ)$ can also be computed
as a colimit over the directed set $\mcO^{\rm c}_M$ instead of
$\mcK_M$ (since both $\mcO^{\rm c}_M$ and $\mcK_M$ are cofinal in a larger directed set $\mcU_{M}$). 
Composing the functor $(M,M \setminus -): \mcO^{\rm c}_M \to \PePair^\op$
with the embedding $\PePair^\op\to \Pair^\op$ and 
$\dH^k(-;\bbZ): \Pair^\op \to \Ab$, we obtain another functor 
$\dH^k(M,M \setminus -;\bbZ): \mcO^{\rm c}_M \to \Ab$
whose colimit is isomorphic to $\dH^k_{\rm c}(M;\bbZ)$, i.e.\
\begin{equation}\label{eqn:alternativedHc}
\dH^k_{\rm c}(M;\bbZ)\simeq \colim\big(\dH^k(M,M \setminus -;\bbZ): \mcO^{\rm c}_M \to \Ab\big)~.
\end{equation}
A technical advantage of this alternative point of view
is that $\dH^k(M,M \setminus O;\bbZ)$, for any $O\in \mcO^{\rm c}_M$,
fits into the full commutative diagram 
of short exact sequences \eqref{eqRelDiffCharDiaAlt} 
while $\dH^k(M,M \setminus K;\bbZ)$, for $K\in \mcK_M$,
in general just fits into the incomplete diagram \eqref{eqRelDiffCharDia}.
\sk

We next define the Abelian group
\begin{equation}\label{eqColimFormsZ}
\Omega^k_{{\rm c},\bbZ}(M) := \colim \big(\Omega^k_\bbZ(M,M \setminus -): \mcK_M \to \Ab\big)
\end{equation}
of (closed) $k$-forms with compact support having integral periods on cycles relative to the complement of their support.
Recalling that $\colim$ is an exact functor for diagrams in Abelian groups over directed sets, it follows that 
\begin{equation}
\colim \Big(\, \frac{\Omega^k(M,M \setminus -)}{\OmegaZ^k(M,M
  \setminus -)}: \mcK_M \to \Ab \, \Big) 
= \frac{\Omega^k_{\rm c}(M)}{\Omega^k_{{\rm c},\bbZ}(M)}~.
\end{equation}
\begin{theo}\label{thmCDiffChar}
Let $M$ be a manifold. Then the diagram
\begin{equation}\label{eqCDiffCharDia}
\xymatrix{
		&	0 \ar[d]	&	0 \ar[d]	&	0 \ar[d]																\\
0 \ar[r]	&	\frac{\H^{k-1}_{\rm c}(M;\bbR)}{\H_{{\rm c},\free}^{k-1}(M;\bbZ)} \ar[r] \ar[d]
					&	\frac{\Omega^{k-1}_{\rm c}(M)}{\Omega^{k-1}_{{\rm c},\bbZ}(M)} \ar[r]^-\dd \ar[d]_-\iota
								&	\dd \Omega^{k-1}_{\rm c}(M) \ar[r] \ar[d]					&	0			\\
0 \ar[r]	&	\H^{k-1}_{\rm c}(M;\bbT) \ar[r]^-\kappa \ar[d]	
					&	\dH^k_{\rm c}(M;\bbZ) \ar[r]^-\cu \ar[d]_-\ch
								&	\Omega^k_{{\rm c},\bbZ}(M) \ar[r] \ar[d]					&	0			\\
0 \ar[r]	&	\H^k_{{\rm c},\tor}(M;\bbZ) \ar[r] \ar[d]
					&	\H^k_{\rm c}(M;\bbZ) \ar[r] \ar[d]
								&	\H^k_{{\rm c},\free}(M;\bbZ) \ar[r] \ar[d]					&	0			\\
		&	0		&	0		&	0
}
\end{equation}
commutes and its rows and columns are short exact sequences.
\end{theo}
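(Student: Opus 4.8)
The plan is to reduce the statement to the full relative diagram \eqref{eqRelDiffCharDiaAlt} by exploiting the alternative description \eqref{eqn:alternativedHc} of $\dH^k_{\rm c}(M;\bbZ)$ as a colimit over the directed set $\mcO^{\rm c}_M$ of relatively compact open subsets with smooth boundary. The crucial point of working over $\mcO^{\rm c}_M$ rather than $\mcK_M$ is that for every $O \in \mcO^{\rm c}_M$ the complement $M \setminus O$ is a \emph{properly embedded} submanifold (possibly with boundary), so Theorem \ref{thmRelDiffCharRefined} applies and $\dH^k(M, M\setminus O;\bbZ)$ sits inside the \emph{complete} commutative diagram \eqref{eqRelDiffCharDiaAlt}, all of whose rows and columns are short exact. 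By contrast, over $\mcK_M$ one has only the incomplete diagram \eqref{eqRelDiffCharDia}.

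First I would record that \eqref{eqRelDiffCharDiaAlt} is natural in $O$. All of its arrows---the maps $\kappa$, $\cu$, $\ch$, $\iota$, $\dd$, the connecting homomorphisms, and the inclusions between the (co)homology and form groups---were shown in Subsection \ref{subRelDiffCharProp} to be components of natural transformations between functors on $\Pair^\op$; precomposing with the functor $(M, M\setminus -): \mcO^{\rm c}_M \to \PePair^\op$ thus turns the whole $5 \times 5$ grid \eqref{eqRelDiffCharDiaAlt} into a commuting diagram of short exact sequences of functors $\mcO^{\rm c}_M \to \Ab$. I would then apply the colimit functor $\colim\colon \Ab^{\mcO^{\rm c}_M} \to \Ab$. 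Since $\mcO^{\rm c}_M$ is directed and $\Ab$ is a Grothendieck category, this functor is exact, so it carries the commuting grid of short exact rows and columns to a commuting grid of short exact rows and columns. Hence the only remaining task is to identify the nine colimit objects with the entries of \eqref{eqCDiffCharDia}.

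The identifications go as follows. The centre term is $\dH^k_{\rm c}(M;\bbZ)$ by \eqref{eqn:alternativedHc}. For each fixed coefficient group $G \in \{\bbZ, \bbR, \bbT\}$, cofinality of $\mcO^{\rm c}_M$ and $\mcK_M$ in $\mcU_M$ (Remark \ref{remAlternativeCoho}) gives $\colim_{\mcO^{\rm c}_M} \H^j(M, M\setminus O; G) \simeq \H^j_{\rm c}(M;G)$, which handles the groups in the left column and in the bottom and middle rows. For the form groups I would first observe that $\omega\vert_{M\setminus O} = 0$ forces $\supp \omega$ to lie in the compact set $\overline{O}$, while conversely any compactly supported $\omega$ vanishes on $M\setminus O$ as soon as $O$ contains $\supp \omega$; hence the structure inclusions identify $\colim_{\mcO^{\rm c}_M}\Omega^k(M, M\setminus O)$ with $\Omega^k_{\rm c}(M)$. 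Restricting to integral periods and invoking cofinality together with \eqref{eqColimFormsZ} yields $\colim_{\mcO^{\rm c}_M} \OmegaZ^k(M, M\setminus O) = \Omega^k_{{\rm c},\bbZ}(M)$, while $\dd$ commutes with these inclusions so that $\colim_{\mcO^{\rm c}_M} \dd\,\Omega^{k-1}(M, M\setminus O) = \dd\,\Omega^{k-1}_{\rm c}(M)$. Exactness of $\colim$ then identifies the quotient $\Omega^{k-1}_{\rm c}(M) / \Omega^{k-1}_{{\rm c},\bbZ}(M)$ and, applied to $0 \to \Hf^{k-1} \to \H^{k-1}(-;\bbR) \to \H^{k-1}(-;\bbR)/\Hf^{k-1} \to 0$, the top-left entry $\H^{k-1}_{\rm c}(M;\bbR) / \H^{k-1}_{{\rm c},\free}(M;\bbZ)$. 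Finally, the torsion and free entries follow because directed colimits commute with passage to the torsion subgroup, so $\colim \Ht^k = (\H^k_{\rm c}(M;\bbZ))_\tor = \H^k_{{\rm c},\tor}(M;\bbZ)$ and, by exactness once more, $\colim \Hf^k = \H^k_{{\rm c},\free}(M;\bbZ)$.

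The hard part will not be the exactness-preservation, which is formal, but the bookkeeping in identifying the colimit objects---above all verifying $\colim_{\mcO^{\rm c}_M} \Omega^k(M, M\setminus O) = \Omega^k_{\rm c}(M)$ via the support characterisation of the condition $\omega\vert_{M\setminus O} = 0$, together with the cofinality comparison between the index sets $\mcO^{\rm c}_M$ and $\mcK_M$ used in Definition \ref{defCDiffChar} and in \eqref{eqColimFormsZ}, and checking that forming torsion and free parts commutes with the directed colimit. Once these identifications are in place, the diagram \eqref{eqCDiffCharDia} is precisely the image of \eqref{eqRelDiffCharDiaAlt} under $\colim$, and the theorem follows.
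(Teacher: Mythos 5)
Your proposal is correct and follows exactly the paper's own argument: the paper's proof likewise invokes the alternative colimit description over $\mcO^{\rm c}_M$ from \eqref{eqn:alternativedHc}, the fact that $M\setminus O$ is properly embedded so that Theorem \ref{thmRelDiffCharRefined} supplies the full relative diagram, and exactness of directed colimits in $\Ab$. The only difference is that you spell out the identification of the nine colimit entries (supports of relative forms, cofinality, torsion/free parts), which the paper leaves implicit.
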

\begin{proof}
This result follows immediately from \eqref{eqn:alternativedHc} 
and Theorem~\ref{thmRelDiffCharRefined} because
the complement $M \setminus O$ of any $O \in \mcO^{\rm c}_M$ is properly embedded
and $\colim$ is an exact functor for diagrams in Abelian groups over directed sets.
\end{proof}

\paragraph*{Functoriality:}
We show that
\begin{equation}\label{eqFunctoriality}
\dH^k_{\rm c}(-;\bbZ): \Man_{m,\emb} \longrightarrow \Ab
\end{equation}
is a functor. Given an open embedding $f: M \to M^\prime$ between $m$-dimensional manifolds,
one can use the same arguments as in Subsection \ref{secHc} and the Excision Theorem \ref{thmDiffCharExcision}
for differential characters on relative cycles to conclude that
$f^\ast: \dH^k(M^\prime,M^\prime \setminus -;\bbZ) \circ \mcK_f \Rightarrow \dH^k(M,M \setminus -;\bbZ)$ 
is a natural isomorphism. Denoting its inverse by
\begin{equation}
(f^\ast)^{-1}: \dH^k(M,M \setminus -;\bbZ) \Longrightarrow \dH^k(M^\prime,M^\prime \setminus -;\bbZ) \circ \mcK_f~,
\end{equation}
the universal property of the colimit allows us to define a 
canonical homomorphism $f_\ast: \dH^k_{\rm c}(M;\bbZ) \to \dH^k_{\rm c}(M^\prime;\bbZ)$,
which we call the push-forward of differential characters with compact support.
One can then easily check that $\dH^k_{\rm c}(-;\bbZ): \Man_{m,\emb} \to \Ab$ is a functor
and that naturality of $\cu$, $\ch$, $\iota$ and $\kappa$ for differential characters on relative chains
carries over to the compactly supported case via our colimit prescription, i.e.\ 
\begin{subequations}
\begin{align}
\cu: \dH^k_{\rm c}(-;\bbZ) \Longrightarrow \Omega^k_{{\rm c},\bbZ}(-)~,		&&	\ch: \dH^k_{\rm c}(-;\bbZ) \Longrightarrow  \H^k_{\rm c}(-;\bbZ)~,		\\[4pt]
\iota: \frac{\Omega^{k-1}_{\rm c}(-)}{\Omega^{k-1}_{{\rm c},\bbZ}(-)} \Longrightarrow  \dH^k_{\rm c}(-;\bbZ)~,
													&&	\kappa: \H_{\rm c}^{k-1}(-;\bbT) \Longrightarrow  \dH^k_{\rm c}(-;\bbZ)~,
\end{align}
\end{subequations}
are natural transformations between functors from $\Man_{m,\emb}$ to $\Ab$.

\begin{ex}
As explained in Examples~\ref{ex:linebundles} and \ref{ex:parallellinebundles}, 
(relative) differential cohomology groups in degree $k=2$ are canonically isomorphic
to isomorphism classes of  Hermitean line bundles with connection (and parallel section).
By passing to the colimit over the directed set $\mcO^{\rm c}_M$ we obtain a 
canonical identification of the compactly supported differential cohomology group $\dH^2_{\rm c}(M;\bbZ)$ 
with the group of isomorphism classes of  Hermitean line bundles with connection 
and a parallel section outside some relatively compact open subset $O \subseteq M$.
Two such triples with sections $\sigma:M \setminus O \to L$ and $\sigma^\prime : M \setminus O^\prime \to L^\prime$ 
are isomorphic if and only if there exists a bundle isomorphism $\psi:L \to L^\prime$,
a subset $\tilde O \in \mcO^{\rm c}_M$ with $\tilde O \subset O \cap O^\prime$, and a section $\tilde\sigma : M \setminus \tilde O \to L$ 
such that 
$\tilde \sigma|_{M\setminus O} = \sigma$ and 
$\psi \circ \tilde\sigma|_{M \setminus O^\prime} = \sigma^\prime$.
\end{ex}

\paragraph*{Module structure:}
Using the natural module structure for differential characters on relative cycles
developed in Subsection \ref{subRelDiffCharModule}, our colimit
prescription for differential characters with compact support
given in Definition \ref{defCDiffChar} yields a natural module 
structure for $\dH^\sharp_{\rm c}(M;\bbZ)$ over the ring $\dH^\sharp(M;\bbZ)$.
We denote this module structure by
\begin{equation}\label{eqn:modcomp}
\cdot\,:\, \dH^k(M;\bbZ) \times \dH^l_{\rm c}(M;\bbZ)
\longrightarrow \dH^{k + l}_{\rm c}(M;\bbZ)~, \qquad (h,h'\,) \longmapsto h \cdot h'\,~.
\end{equation}
The bihomomorphism \eqref{eqn:modcomp} is obtained by taking the 
colimit over $K \in \mcK_M$ in \eqref{eqRelDiffCharModule} for $S = M \setminus K$. 
Given any morphism $f:M \to M^\prime$ in $\Man_{m,\emb}$, 
the diagram 
\begin{equation}\label{eqCDiffCharModuleNat}
\xymatrix@C=40pt{
\dH^k(M^\prime;\bbZ) \times \dH^l_{\rm c}(M;\bbZ) \ar[r]^-{f^\ast \times \id} \ar[d]_-{\id \times f_\ast}
		&	\dH^k(M;\bbZ) \times \dH^l_{\rm c}(M;\bbZ) \ar[r]^-{\cdot}
				&	\dH^{k+l}_{\rm c}(M;\bbZ) \ar[d]^-{f_\ast}												\\
\dH^k(M^\prime;\bbZ) \times \dH^l_{\rm c}(M^\prime;\bbZ) \ar[rr]_-{\cdot}
		&		&	\dH^{k+l}_{\rm c}(M^\prime;\bbZ)
}
\end{equation}
commutes by construction of $f_\ast$, which implies that the $\dH^\sharp(M;\bbZ)$-module
structure on $\dH^\sharp_{\rm c}(M;\bbZ)$ is natural.
The homomorphisms $\cu$, $\ch$, $\iota$ and $\kappa$ 
for compactly supported differential characters are compatible with the module structure, i.e.\
\begin{subequations}\label{eqCCompatibility}
\begin{align}
\cu(h \cdot h'\,)			&	= \cu\, h \wedge \cu\, h'\,~,
								&	\ch(h \cdot h'\,)			&	= \ch\, h \smile \ch\, h'\,~,					\\[4pt]
h \cdot \kappa\, \upsilon	&	= (-1)^k\,  \kappa(\ch\, h \smile \upsilon)~,
								&	h \cdot \iota\, [\alpha]	&	= (-1)^k\,  \iota\, [\cu\, h \wedge \alpha]~,	\\[4pt]
\kappa\, u \cdot h'\,		&	= \kappa(u \smile \ch\, h'\,)~,
								&	\iota\, [A] \cdot h'\,		&	= \iota\, [A \wedge \cu\, h'\,]~,
\end{align}
\end{subequations}
for all  $h \in \dH^k(M;\bbZ)$, $h'\, \in \dH^l_{\rm c}(M;\bbZ)$, 
$[A] \in \Omega^{k-1}(M) / \Omega^{k-1}_\bbZ(M)$, $[\alpha] \in \Omega^{l-1}_{\rm c}(M) / \Omega^{l-1}_{{\rm c},\bbZ}(M)$,
 $u \in \H^{k-1}(M;\bbT)$ and $\upsilon \in \H^{l-1}_{\rm c}(M;\bbT)$.
This again follows from similar results for the module structure of differential characters on relative cycles, 
see Subsection \ref{subRelDiffCharModule}.

\paragraph*{The natural homomorphism $\boldsymbol I$:}
Applying our colimit prescription to the homomorphism displayed in \eqref{eqI}
defines a natural homomorphism
\begin{equation}\label{eqCDiffCharToDiffChar}
I: \dH^k_{\rm c}(M;\bbZ) \longrightarrow \dH^k(M;\bbZ)
\end{equation}
sending differential characters with compact support to Cheeger-Simons differential characters. 
Naturality means that for any morphism $f : M\to M^\prime$
in $\Man_{m,\emb}$ the diagram
\begin{equation}\label{eqCDiffCharToDiffCharNat}
\xymatrix{
\dH^k_{\rm c}(M;\bbZ) \ar[r]^-I \ar[d]_-{f_\ast}	&	\dH^k(M;\bbZ)							\\
\dH^k_{\rm c}(M^\prime;\bbZ) \ar[r]_-I			&	\dH^k(M^\prime;\bbZ) \ar[u]_-{f^\ast}
}
\end{equation}
commutes. As for differential characters on relative cycles, this homomorphism 
is compatible with curvature, characteristic class, topological trivialization 
and inclusion of flat characters. It is furthermore multiplicative with respect to the 
$\dH^\sharp(M;\bbZ)$-module structure of $\dH^\sharp_{\rm c}(M;\bbZ)$.
\begin{rem}\label{remNonInj}
As for ordinary cohomology, the natural homomorphism 
$I: \dH^k_{\rm c}(M;\bbZ) \to \dH^k(M;\bbZ)$ is in general neither surjective nor injective.
Its kernel and cokernel may be characterized by applying the colimit over the directed 
set $\mcO^{\rm c}_M$ to the long exact sequence \eqref{eq:long_ex_sequ_par}.
We observe that injectivity of $I: \dH^k_{\rm c}(M;\bbZ) \to \dH^k(M;\bbZ)$ 
is equivalent to injectivity of $\H^{k-1}_{\rm c}(M;\bbT) \to \H^{k-1}(M;\bbT)$. 
This follows from the colimit of the diagram in Remark \ref{remRelNonSub} 
and observing that $\Omega^k_{{\rm c},\bbZ}(M) \to \OmegaZ^k(M)$ is an injection. 
We provide below some examples for which $\H^{k-1}_{\rm c}(M;\bbT) \to \H^{k-1}(M;\bbT)$ 
fails to be injective, and therefore $I: \dH^k_{\rm c}(M;\bbZ) \to \dH^k(M;\bbZ)$ too: 
Let $m \geq 3$, $k \in \{2, \ldots, m\}$ with $2k \neq m+2, m+3$ 
and take $M = \bbR^{k-1} \times \bbS^{m-k+1}$. 
Observe that the collection of compact subsets $\mathcal B = \{B_r \times \bbS^{m-k+1}: r>0\}$, 
where $B_r$ is the closed ball in $\bbR^{k-1}$ of radius $r$ centered at the origin, is cofinal in $\mcK_M$. 
Therefore, recalling \eqref{eqHcdef}, we find 
$\H^{k-1}_{\rm c}(M;\bbT) \simeq \colim(\H^{k-1}(M, M \setminus -;\bbT): \mathcal B \to \Ab)$. 
The long exact sequence relating relative and absolute cohomology groups, 
see e.g.\ \cite[p.\ 200]{Hatcher}, provides the exact sequence 
\begin{equation}
\xymatrix{
\H^{k-2}(M;\bbT) \ar[r]	&	\H^{k-2}(M \setminus K;\bbT) \ar[r]
&	\H^{k-1}(M, M \setminus K;\bbT) \ar[r]	&	\H^{k-1}(M;\bbT)
}
\end{equation}
for each $K \in \mathcal B$. Since $2k \neq m+2$, $\H^{k-1}(M;\bbT)$ is trivial; 
therefore $\H^{k-1}(M, M \setminus K;\bbT)$ is isomorphic to the quotient of $\H^{k-2}(M \setminus K;\bbT)$ 
by the image of $\H^{k-2}(M;\bbT)$. 
Taking also $2k \neq m+3$ into account, one concludes that $\H^{k-2}(M \setminus K;\bbT) \simeq \bbT$. 
Since this result is the same for all $K \in \mathcal B$, the homomorphism 
$\bbT \simeq \H^{k-1}_{\rm c}(M;\bbT) \to \H^{k-1}(M;\bbT) \simeq 0$ is not injective. 
\end{rem}
\begin{rem}\label{remHLZ}
Since differential cohomology groups are usually regarded as a refinement
of smooth singular cohomology groups by differential forms, it seems reasonable 
to introduce compactly supported differential cohomology groups as a refinement of 
compactly supported cohomology groups by compactly supported differential forms.
This is exactly the case for our notion of compactly supported differential 
characters $\dH^\sharp_{\rm c}(M;\bbZ)$, as well as for the model 
in \cite[Section 8]{HLZ}, where the authors develop 
another approach to differential cohomology with compact support
based on de Rham-Federer currents. 
In particular, \cite[Proposition 8.3]{HLZ} is the analogue of our Theorem \ref{thmCDiffChar}. 
However, in \cite[Theorem 8.4]{HLZ} it is erroneously stated that the group of de 
Rham-Federer characters with compact support 
is isomorphic to the \emph{subgroup} of Cheeger-Simons differential characters on $M$ 
vanishing upon restriction to the complement of some compact subset $K \subseteq M$. 
This statement is in contrast with our results, see also Remark~\ref{remNonSub} below. 
Actually, assuming that an Abelian group $\dH^k_{\rm c}(M;\bbZ)$ fits into the commutative diagram
\begin{equation}
\xymatrix{
0 \ar[r] & \H^{k-1}_{\rm c}(M;\bbT) \ar[r] \ar[d] & \dH^k_{\rm c}(M;\bbZ) \ar[r] \ar[d]_-{I} & \Omega^k_{{\rm c},\bbZ}(M) \ar[r] \ar[d] & 0 \\
0 \ar[r] & \H^{k-1}(M;\bbT) \ar[r] & \dH^k(M;\bbZ) \ar[r] & \OmegaZ^k(M) \ar[r] & 0 
}
\end{equation}
with exact rows, the non-injectivity of the homomorphism $\H^{k-1}_{\rm c}(M;\bbT) \to \H^{k-1}(M;\bbT)$ implies that $\dH^k_{\rm c}(M;\bbZ)$ cannot be a subgroup of $\dH^k(M;\bbZ)$ in general.
On the other hand, from the long exact sequence \eqref{eq:long_ex_sequ_par} we conclude that the inclusion of the subgroup of Cheeger-Simons characters on $M$ which vanish upon restriction to the complement of some relatively compact open subset $O \in \mcO^{\rm c}_M$ into $\dH^k(M;\bbZ)$ factorizes the map $I:\dH^k_{\rm c}(M;\bbZ) \to \dH^k(M;\bbZ)$.
In other words, the subgroup of $\dH^k(M;\bbZ)$ of Cheeger-Simons differential characters introduced in \cite[Theorem 8.4]{HLZ} coincides with the image of the homomorphism $I:\dH^k_{\rm c}(M;\bbZ) \to \dH^k(M;\bbZ)$. 
It follows that the homomorphism of \cite[Theorem 8.4]{HLZ} in general fails to be an isomorphism. 
Let us stress that this fact does not affect the rest of \cite{HLZ}. 
\end{rem}

\begin{rem}
For the sake of completeness, it is worth to compare 
between the compactly supported differential characters 
introduced here using the Cheeger-Simons approach to differential cohomology 
and the de Rham-Federer characters with compact support established in \cite[Section 8]{HLZ}.\footnote{We 
are grateful to the anonymous referee for encouraging us to pursue this point 
and for suggesting the relevant construction.} 
Here we only sketch the construction of a natural comparison isomorphism, 
pointing to the relevant literature. A more detailed argument requires the introduction 
of a large amount of material from de Rham-Federer theory, 
which is beyond the scope of the present paper. 
The relevant homomorphism can be obtained in analogy with \cite[Section 4]{HLZ}. 
Specifically, integrating a compactly supported de Rham-Federer character 
over cycles relative to its support leads to a compactly supported differential character 
in the sense of the present paper. This provides a homomorphism 
\begin{equation}
\Psi: \hat{\mathbb{H}}_{\mathrm{cpt}}^{k-1}(M) \longrightarrow \dH_{\rm c}^k(M;\bbZ)~,
\end{equation}
where $\hat{\mathbb{H}}_{\mathrm{cpt}}^p(M)$ denotes 
the Abelian group of de Rham-Federer character with compact support, cf.\ \cite[Definition 8.2]{HLZ}. 
(Notice the degree shift due to the different convention adopted in \cite{HLZ}.) 
$\Psi$ behaves naturally with respect to open embeddings, cf.\ \eqref{eqFunctoriality}. 
To prove that $\Psi$ is also an isomorphism, one considers the short exact sequence 
in \cite[Proposition 8.3]{HLZ} and the similar one obtained from \eqref{eqCDiffCharDia}. 
Poincar\'e duality between homology and cohomology with compact support \cite[Theorem 3.35]{Hatcher} 
provides isomorphisms between the objects appearing on the left (respectively on the right) 
of the sequences mentioned above. One can check that 
these isomorphisms, together with $\Psi$, form a morphism of short exact sequences. 
Therefore, by the five lemma, the natural homomorphism $\Psi$ is also an isomorphism. 
\end{rem}

\begin{ex}
A source of examples of compactly supported differential
characters is obtained by classical Chern-Weil theory.
As in Example~\ref{ex:CW}, let $G$ be a Lie group with finitely many 
connected components and $\lambda: \mathfrak g^k \to \bbR$ an invariant polynomial.
Let $P \to M$ be a principal $G$-bundle with connection $\theta$
and suppose that there exists an isomorphism $(P,\theta)|_{M \setminus O} \xrightarrow{\simeq} ((M\setminus O) \times G,\dd)$ 
to the trivial bundle with trivial connection outside some relatively compact open 
subset $O \in \mcO^{\rm c}_M$.\footnote{Since $\theta$ is equivalent to the trivial connection outside $O$, 
the constant maps in the trivialization are parallel sections.}
Mimicking in this setting the construction from \cite{CS} 
of differential characters with curvature the Chern-Weil form $\lambda(F_\theta)$, 
we obtain an element of $\dH^{2k}(M;\bbZ)$ that lies in the image of 
the natural homomorphism $I: \dH^{2k}_{\rm c}(M;\bbZ) \to \dH^{2k}(M;\bbZ)$.
Since $I$ is in general not injective, 
the question arises whether this character has a canonical preimage.
Taking into account the identification of $\dH^{2k}(M,M\setminus O;\bbZ)$ 
with the group of parallel relative differential characters from \cite{BB}, 
we obtain a canonical character from the \emph{Cheeger-Chern-Simons character} 
associated with $(P,\theta)$,  see \cite{B14} for details of the construction. 
\end{ex}


\section{Smooth Pontryagin duality}\label{secPontrDuality}
The aim of this section is to establish a version of Pontryagin duality for Cheeger-Simons differential characters. 
This should be compared to \cite{HLZ}, where a similar result is obtained 
in a different model for differential cohomology based on de Rham-Federer currents.
\sk
 
In this section all manifolds are implicitly assumed to be connected, $m$-dimensional, and oriented.
This in particular allows us to define the integration homomorphism $\int_M : \Omega_{\rm c}^m(M)\to \bbR$.
We denote by $\oMan_{m,\emb}$ the corresponding category of
oriented and connected $m$-dimensional manifolds with morphisms given by orientation-preserving 
open embeddings. Some of the following results are proven under the additional 
(sufficient but not necessary) hypothesis that the manifold $M$ is of finite-type, which 
implies that all (co)homology groups are finitely generated and in particular allows us to 
interpret cohomology with compact support as the dual of ordinary cohomology, 
see e.g.\ Poincar\'e duality for de Rham cohomology in \cite[Chapter~I, Section~5]{BT}.
We will clearly indicate which statements rely on this
assumption. Some of the technical details required in
Section~\ref{subPontryagin} are delegated to
Appendix~\ref{app:axuiliary} at the end of the paper.

\subsection{Definitions}\label{subPontryagin}
For any Abelian group $G$, we denote its Pontryagin dual
(also called the character group) by $G^\star:=\Hom(G,\bbT)$. 
We define the smooth Pontryagin dual $\Omega^k_{\rm c}(M)^\star_\infty\subseteq \Omega^k_{\rm c}(M)^\star$ 
of $\Omega^k_{\rm c}(M)$ as the subgroup of elements $\varphi\in \Omega^k_{\rm c}(M)^\star$ 
which are smooth in the sense that there exists $\omega \in \Omega^{m-k}(M)$
such that
\begin{equation}
\varphi(\alpha) = \int_M \, \omega \wedge \alpha \mod \bbZ~,
\end{equation}
for all $\alpha \in \Omega_{\rm c}^k(M)$. Similarly, we define the smooth Pontryagin dual $\Omega^k(M)^\star_\infty\subseteq \Omega^k(M)^\star$ 
of $\Omega^k(M)$ as the subgroup of elements $\psi\in \Omega^k(M)^\star$ 
which are smooth in the sense that there exists $\alpha \in \Omega^{m-k}_{\rm c}(M)$
such that 
\begin{equation}
\psi(\omega) = \int_M \, \omega \wedge \alpha \mod \bbZ~,
\end{equation}
for all $\omega \in \Omega^k(M)$. Introducing the (weakly non-degenerate) $\bbT$-valued pairing
\begin{equation}\label{eqPairingFormPre}
\ips{\cdot}{\cdot}_{\Omega}^{} : \Omega^k(M)\times \Omega^{m-k}_{\rm c}(M)\longrightarrow \bbT~, \qquad 
(\omega,\alpha) \longmapsto (-1)^k\,\int_M\, \omega\wedge\alpha \mod \bbZ~,
\end{equation}
partial evaluation provides us with the two homomorphisms
\begin{subequations}\label{eqn:partialevalpre}
\begin{flalign}
\Omega^k(M) \longrightarrow \Omega^{m-k}_{\rm c}(M)^\star~,& \qquad \omega \longmapsto \ips{\omega}{\cdot}_{\Omega}^{}~,\\[4pt]
\Omega^{m-k}_{\rm c}(M) \longrightarrow \Omega^k(M)^\star ~,& \qquad \alpha \longmapsto \ips{\cdot}{\alpha}_{\Omega}^{}~.
\end{flalign}
\end{subequations}
It is clear from these definitions that \eqref{eqn:partialevalpre} induces isomorphisms
\begin{equation}
\Omega^k(M) \simeq \Omega^{m-k}_{\rm c}(M)^\star_\infty~, \qquad \Omega^{m-k}_{\rm c}(M)\simeq \Omega^k(M)^\star_\infty~.
\end{equation}

We further define the smooth Pontryagin duals of the quotients
$\Omega^k_{\rm c}(M) / \Omega^k_{{\rm c},\bbZ}(M)$
and $ \Omega^k(M) / \OmegaZ^k(M)$ by
\begin{subequations}\label{eqSmoothPontrDualTT}
\begin{align}
\Big(\, \frac{\Omega^k_{\rm c}(M)}{\Omega^k_{{\rm c},\bbZ}(M)}\, \Big)^\star_\infty
&	:= \big\{\varphi \in \Omega^k_{\rm c}(M)^\star_\infty\,:\;
		\varphi\big(\Omega^k_{{\rm c},\bbZ}(M)\big) = \{0\} \big\}\,,\\[4pt]
\Big(\, \frac{\Omega^k(M)}{\OmegaZ^k(M)}\, \Big)^\star_\infty
&	:= \big\{\psi \in \Omega^k(M)^\star_\infty\,:\; \psi\big(\OmegaZ^k(M)\big) = \{0\} \big\}\,.
\end{align}
\end{subequations}
Notice that the smooth Pontryagin dual
$(\Omega^k_{\rm c}(M) / \Omega^k_{{\rm c},\bbZ}(M))^\star_{\infty}$ 
can be identified with a subgroup of  $(\Omega^k_{\rm c}(M) / \Omega^k_{{\rm c},\bbZ}(M))^\star$ 
and similarly that $(\Omega^k(M) / \OmegaZ^k(M))^\star_{\infty}$ can 
be identified with a subgroup of $(\Omega^k(M) / \OmegaZ^k(M))^\star$.
We also define the smooth Pontryagin duals of the subgroups
$\Omega^k_{{\rm c},\bbZ}(M)\subseteq \Omega^k_{\rm c}(M)$ and $\OmegaZ^k(M)\subseteq \Omega^k(M)$ by
\begin{subequations}\label{eqSmoothPontrDualFormsZ}
\begin{flalign}
\Omega^k_{{\rm c},\bbZ}(M)^\star_\infty	&	:= \frac{\Omega^k_{\rm c}(M)^\star_\infty}{\big\{\varphi \in \Omega^k_{\rm c}(M)^\star_\infty\,:\;
		\varphi\big(\Omega^k_{{\rm c},\bbZ}(M)\big) = \{0\} \big\}} = \frac{\Omega^k_{\rm c}(M)^\star_\infty}{\Big(\, \frac{\Omega^k_{\rm c}(M)}{\Omega^k_{{\rm c},\bbZ}(M)}\, \Big)^\star_\infty}~,\\[4pt]
\OmegaZ^k(M)^\star_\infty			&	:= \frac{\Omega^k(M)^\star_\infty}{\big\{\psi \in \Omega^k(M)^\star_\infty\,:\; \psi\big(\OmegaZ^k(M)\big) = \{0\} \big\}} = \frac{\Omega^k(M)^\star_\infty}{\Big(\, \frac{\Omega^k(M)}{\OmegaZ^k(M)}\, \Big)^\star_\infty}~.
\end{flalign}
\end{subequations}
Notice that the smooth Pontryagin dual
$\Omega^k_{{\rm c},\bbZ}(M)^\star_\infty$ 
can be identified with a subgroup of  $\Omega^k_{{\rm c},\bbZ}(M)^\star$ 
and similarly that $\OmegaZ^k(M)^\star_\infty$ can 
be identified with a subgroup of $\OmegaZ^k(M)^\star$.
\sk

To characterize \eqref{eqSmoothPontrDualTT} and \eqref{eqSmoothPontrDualFormsZ} more explicitly,
we notice that Lemma \ref{lemFormsZ} implies that
\begin{equation}\label{eqFormsZ}
\OmegaZ^k(M) 
= \Big\{\omega \in \Omega^k(M)\,:\; \int_M\, \omega \wedge \Omega^{m-k}_{{\rm c},\bbZ}(M) \subseteq \bbZ\Big\}~. 
\end{equation}
Hence \eqref{eqPairingFormPre} induces the $\bbT$-valued pairing
\begin{equation}\label{eqPairingFormRightQuotient} 
\ips{\cdot}{\cdot}_{\Omega}^{} : \OmegaZ^k(M)\times \frac{\Omega^{m-k}_{\rm c}(M)}{\Omega^{m-k}_{{\rm c},\bbZ}(M)}
\longrightarrow \bbT~, \qquad (\omega,[\alpha]) \longmapsto (-1)^k\,\int_M\, \omega\wedge\alpha \mod \bbZ
\end{equation}
and by partial evaluation the two homomorphisms
\begin{subequations}\label{eqn:partevalOmega1} 
\begin{flalign}
\label{eqn:partialevalOmegaZ} \OmegaZ^k(M) \longrightarrow \Big(\, \frac{\Omega^{m-k}_{\rm c}(M)}{\Omega^{m-k}_{{\rm c},\bbZ}(M)}\, \Big)^\star~,& \qquad \omega \longmapsto \ips{\omega}{\cdot}_{\Omega}^{}~,\\[4pt]
\label{eqn:partialevalOmegacmodOmegacZ} \frac{\Omega^{m-k}_{\rm c}(M)}{\Omega^{m-k}_{{\rm c},\bbZ}(M)}  \longrightarrow \OmegaZ^k(M)^\star 
~,& \qquad [\alpha] \longmapsto \ips{\cdot}{[\alpha]}_{\Omega}^{}~.
\end{flalign}
\end{subequations}
Moreover, again because of \eqref{eqFormsZ}, the pairing \eqref{eqPairingFormPre} induces another $\bbT$-valued pairing
\begin{equation}\label{eqPairingFormLeftQuotient} 
\ips{\cdot}{\cdot}_{\Omega}^{} : \frac{\Omega^k(M)}{\OmegaZ^k(M)} \times \Omega^{m-k}_{{\rm c},\bbZ}(M)
\longrightarrow \bbT~, \qquad ([\omega], \alpha) \longmapsto (-1)^k\,\int_M\, \omega\wedge\alpha \mod \bbZ
\end{equation}
and by partial evaluation the two homomorphisms
\begin{subequations}\label{eqn:partevalOmega2}
\begin{flalign}
\label{eqn:partialevalOmegamodOmegaZ}\frac{\Omega^k(M)}{\OmegaZ^k(M)}  \longrightarrow \Omega^{m-k}_{{\rm c},\bbZ}(M)^\star~,& \qquad [\omega] \longmapsto \ips{[\omega]}{\cdot}_{\Omega}^{}~,\\[4pt]
\label{eqn:partialevalOmegacZ} \Omega^{m-k}_{{\rm c},\bbZ}(M) \longrightarrow \Big(\, \frac{\Omega^k(M)}{\OmegaZ^k(M)}\, \Big)^\star
~,& \qquad \alpha \longmapsto \ips{\cdot}{\alpha}_{\Omega}^{}~.
\end{flalign}
\end{subequations}
For manifolds $M$ of finite-type, Lemma \ref{lemCFormsZ} implies that
\begin{equation}\label{eqCFormsZ}
\Omega^{m-k}_{{\rm c},\bbZ}(M) 
= \Big\{\alpha \in \Omega^{m-k}_{\rm c}(M)\, :\; \int_M\, \Omega^{k}_\bbZ(M)\wedge \alpha \subseteq \bbZ\Big\}~. 
\end{equation}
We are now ready to demonstrate
\begin{lem}\label{lem:auxisosOmega}
The homomorphisms \eqref{eqn:partialevalOmegaZ} and \eqref{eqn:partialevalOmegamodOmegaZ} induce isomorphisms
\begin{equation}
\OmegaZ^k(M) \simeq \Big(\, \frac{\Omega^{m-k}_{\rm c}(M)}{\Omega^{m-k}_{{\rm c},\bbZ}(M)}\, \Big)^\star_\infty~,\qquad
\frac{\Omega^k(M)}{\OmegaZ^k(M)} \simeq \Omega^{m-k}_{{\rm c},\bbZ}(M)^\star_\infty~.
\end{equation}
For $M$ of finite-type, the homomorphisms \eqref{eqn:partialevalOmegacmodOmegacZ}  and
\eqref{eqn:partialevalOmegacZ} induce isomorphisms
\begin{equation}
\frac{\Omega^{m-k}_{\rm c}(M)}{\Omega^{m-k}_{{\rm c},\bbZ}(M)} \simeq \OmegaZ^k(M)^\star_\infty ~,\qquad 
\Omega^{m-k}_{{\rm c},\bbZ}(M)\simeq \Big(\, \frac{\Omega^k(M)}{\OmegaZ^k(M)}\, \Big)^\star_\infty~.
\end{equation}
\end{lem}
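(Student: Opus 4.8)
The plan is to deduce all four isomorphisms from the two ``full'' smooth duality isomorphisms $\Omega^k(M)\simeq\Omega^{m-k}_{\rm c}(M)^\star_\infty$ and $\Omega^{m-k}_{\rm c}(M)\simeq\Omega^k(M)^\star_\infty$ induced by partial evaluation of the pairing \eqref{eqPairingFormPre}, which were already established above, together with the annihilator descriptions of the integral-period subgroups furnished by \eqref{eqFormsZ} (Lemma~\ref{lemFormsZ}) and, in the finite-type case, by \eqref{eqCFormsZ} (Lemma~\ref{lemCFormsZ}). The key observation is that, by the very definitions \eqref{eqSmoothPontrDualTT} and \eqref{eqSmoothPontrDualFormsZ}, each of the four smooth Pontryagin duals appearing in the statement is either the subgroup of one of the full smooth duals that annihilates an integral-period subgroup of forms, or the quotient of that full smooth dual by the same annihilator. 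Once this annihilator is identified with a subgroup of forms, each claim reduces to the restriction, respectively the descent to a quotient, of one of the full duality isomorphisms.

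Concretely, for the first pair I would start from the isomorphism $P\colon\Omega^k(M)\xrightarrow{\simeq}\Omega^{m-k}_{\rm c}(M)^\star_\infty$, $\omega\mapsto\ips{\omega}{\cdot}_\Omega$. Since $P$ is an isomorphism, \eqref{eqFormsZ} shows that $P\big(\OmegaZ^k(M)\big)$ is precisely the subgroup $\{\varphi\in\Omega^{m-k}_{\rm c}(M)^\star_\infty:\varphi(\Omega^{m-k}_{{\rm c},\bbZ}(M))=\{0\}\}$, which by \eqref{eqSmoothPontrDualTT} equals $\big(\Omega^{m-k}_{\rm c}(M)/\Omega^{m-k}_{{\rm c},\bbZ}(M)\big)^\star_\infty$. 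Restricting $P$ to $\OmegaZ^k(M)$ therefore yields the first isomorphism, and this restricted map is exactly \eqref{eqn:partialevalOmegaZ} regarded as landing in the smooth dual. Passing to quotients, $P$ descends to an isomorphism $\Omega^k(M)/\OmegaZ^k(M)\xrightarrow{\simeq}\Omega^{m-k}_{\rm c}(M)^\star_\infty/P\big(\OmegaZ^k(M)\big)=\Omega^{m-k}_{{\rm c},\bbZ}(M)^\star_\infty$, where the last equality is \eqref{eqSmoothPontrDualFormsZ}; this is the map induced by \eqref{eqn:partialevalOmegamodOmegaZ} and gives the second isomorphism.

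For the second pair I would run the symmetric argument with $Q\colon\Omega^{m-k}_{\rm c}(M)\xrightarrow{\simeq}\Omega^k(M)^\star_\infty$, $\alpha\mapsto\ips{\cdot}{\alpha}_\Omega$, now invoking \eqref{eqCFormsZ} in place of \eqref{eqFormsZ}: it identifies $Q\big(\Omega^{m-k}_{{\rm c},\bbZ}(M)\big)$ with the annihilator $\{\psi\in\Omega^k(M)^\star_\infty:\psi(\OmegaZ^k(M))=\{0\}\}=\big(\Omega^k(M)/\OmegaZ^k(M)\big)^\star_\infty$. Restricting $Q$ gives the fourth isomorphism, realized by \eqref{eqn:partialevalOmegacZ}, while descending $Q$ to the quotient gives the third, realized by \eqref{eqn:partialevalOmegacmodOmegacZ}, with target $\Omega^k(M)^\star_\infty/Q\big(\Omega^{m-k}_{{\rm c},\bbZ}(M)\big)=\OmegaZ^k(M)^\star_\infty$.

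The routine points are that $P$ and $Q$ are injective with image inside the respective smooth duals, which is immediate from the non-degeneracy of \eqref{eqPairingFormPre} and the definition of smoothness, and that the restriction or descent of an isomorphism along a matched pair of subgroups is again an isomorphism. The only genuine content is the identification of the two annihilators with the integral-period subgroups $\OmegaZ^k(M)$ and $\Omega^{m-k}_{{\rm c},\bbZ}(M)$, which is exactly what \eqref{eqFormsZ} and \eqref{eqCFormsZ} provide; thus the main obstacle is entirely delegated to Lemmas~\ref{lemFormsZ} and \ref{lemCFormsZ}. This is also precisely where the finite-type hypothesis is used: without it $\Omega^{m-k}_{{\rm c},\bbZ}(M)$ need not exhaust the annihilator of $\OmegaZ^k(M)$, so the second pair of isomorphisms may fail.
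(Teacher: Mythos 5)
Your proposal is correct and follows the same route as the paper, whose proof simply states that the first pair of isomorphisms follows from \eqref{eqFormsZ} by a straightforward calculation and the second pair similarly using also \eqref{eqCFormsZ}; you have merely written out that calculation, correctly identifying the images of the integral-period subgroups under the full smooth duality isomorphisms with the annihilator subgroups from \eqref{eqSmoothPontrDualTT} and then restricting, respectively descending to quotients via \eqref{eqSmoothPontrDualFormsZ}. Your closing remark on where the finite-type hypothesis enters (only through Lemma~\ref{lemCFormsZ}) matches the paper's logic as well.
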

\begin{proof}
The first part follows from \eqref{eqFormsZ} by a straightforward calculation and the second part
similarly by using also \eqref{eqCFormsZ}.
\end{proof}

Following \cite{HLZ}, we finally define the smooth Pontryagin duals of 
(compactly supported) differential cohomology.
\begin{defi}\label{defSmoothPontrDualDiffChar}
\begin{itemize}
\item[(i)] The smooth Pontryagin dual $\dH^k_{\rm c}(M;\bbZ)^\star_\infty \subseteq \dH^k_{\rm c}(M;\bbZ)^\star$
of $\dH^k_{\rm c}(M;\bbZ)$ is the preimage
\begin{equation}
\dH^k_{\rm c}(M;\bbZ)^\star_\infty
	:= (\iota^\star)^{-1}\Big(\, \frac{\Omega_{\rm c}^{k-1}(M)}{\Omega^{k-1}_{{\rm c},\bbZ}(M)}\, \Big)^\star_\infty~,
\end{equation}
where $\iota^\star := \Hom(\iota,\bbT) :  \dH^k_{\rm c}(M;\bbZ)^\star \to (\Omega_{\rm c}^{k-1}(M) / \Omega^{k-1}_{{\rm c},\bbZ}(M))^\star$
is the Pontryagin dual of the topological trivialization $\iota: \Omega_{\rm c}^{k-1}(M) / \Omega^{k-1}_{{\rm c},\bbZ}(M) \to \dH^k_{\rm c}(M;\bbZ)$.

\item[(ii)] The smooth Pontryagin dual $\dH^k(M;\bbZ)^\star_\infty\subseteq \dH^k(M;\bbZ)^\star$
of $\dH^k(M;\bbZ)$ is the preimage
\begin{equation}
\dH^k(M;\bbZ)^\star_\infty
				:= (\iota^\star)^{-1}\Big(\, \frac{\Omega^{k-1}(M)}{\OmegaZ^{k-1}(M)}\, \Big)^\star_\infty~,
\end{equation}
where $\iota^\star := \Hom(\iota,\bbT) :  \dH^k(M;\bbZ)^\star \to (\Omega^{k-1}(M) / \Omega^{k-1}_{\bbZ}(M))^\star$
is the Pontryagin dual of the topological trivialization $\iota: \Omega^{k-1}(M) / \Omega^{k-1}_{\bbZ}(M) \to \dH^k(M;\bbZ)$.
\end{itemize}
\end{defi}

\paragraph*{Functoriality:}
We show that the smooth Pontryagin duals of (compactly supported) differential characters
define functors
\begin{equation}
\dH^k(-;\bbZ)^\star_\infty: \oMan_{m,\emb} \longrightarrow \Ab~,\qquad
\dH^k_{\rm c}(-;\bbZ)^\star_\infty: \oMan_{m,\emb}^\op \longrightarrow \Ab~.
\end{equation}
These functors are subfunctors
of $\dH^k(-;\bbZ)^\star := \Hom(\dH^k(-;\bbZ),\bbT): \oMan_{m,\emb} \to \Ab$ 
and $\dH^k_{\rm c}(-;\bbZ)^\star := \Hom(\dH^k_{\rm c}(-;\bbZ),\bbT): \oMan_{m,\emb}^\op \to \Ab$.
For any morphism $f: M \to M^\prime$ in $\oMan_{m,\emb}$, 
the corresponding push-forward $f_\ast := \Hom(f^\ast,\bbT): \dH^k(M;\bbZ)^\star \to \dH^k(M^\prime;\bbZ)^\star$ 
maps smooth group characters $\psi \in \dH^k(M;\bbZ)^\star_\infty$ 
to smooth group characters $f_\ast \psi \in \dH^k(M^\prime;\bbZ)^\star_\infty$
because $(\Omega^{k-1}(-)/\OmegaZ^{k-1}(-))^\star_\infty : \oMan_{m,\emb} \to \Ab$
is a functor (this follows from \eqref{eqSmoothPontrDualTT}) and $\iota^\star$ is by construction
a natural transformation.
Similarly, the pull-back $f^\ast := \Hom(f_\ast,\bbT): \dH^k_{\rm c}(M^\prime;\bbZ)^\star \to \dH^k_{\rm c}(M;\bbZ)^\star$ 
maps smooth group characters $\varphi^\prime \in \dH^k_{\rm c}(M^\prime;\bbZ)^\star_\infty$ to smooth group
characters $f^\ast\varphi^\prime \in \dH^k_{\rm c}(M;\bbZ)^\star_\infty$ because
$(\Omega_{\rm c}^{k-1}(-) / \Omega^{k-1}_{{\rm c},\bbZ}(-))^\star_\infty : 
\oMan_{m,\emb}^\op \to \Ab$ is a functor (this follows again from \eqref{eqSmoothPontrDualTT}) 
and $\iota^\star$ is by construction a natural transformation.

\subsection{Duality theorem}
We will characterize $\dH^k_{\rm c}(M;\bbZ)^\star_\infty$ 
in terms of differential characters and $\dH^k(M;\bbZ)^\star_\infty$ 
in terms of differential characters with compact support. This will establish
a version of smooth Pontryagin duality for
(compactly supported) differential characters. 
The main results of this section, Theorem \ref{thmCDiffCharIso} and Corollary \ref{corPontryaginDuality}, 
provide an independent proof of \cite[Theorem\ 8.7]{HLZ} within our model 
for compactly supported differential characters. This is a valuable result because our model 
is very close to the original Cheeger-Simons theory and does not rely on techniques from de Rham-Federer theory.
For results concerning Lefschetz-Pontryagin duality for de Rham-Federer characters refer to \cite{HL2}. 
\sk

The module structure on compactly supported differential characters
given in Section \ref{secCDiffChar} defines a bihomomorphism
$\cdot : \dH^k(M;\bbZ) \times\dH^{m-k+1}_{\rm c}(M;\bbZ) \to \dH^{m+1}_{\rm c}(M;\bbZ)$.
Using the diagram in Theorem \ref{thmCDiffChar},
we observe that $\dH^{m+1}_{\rm c}(M;\bbZ)$ is canonically isomorphic
to $\H^m_{\rm c}(M;\bbR) / \H^m_{{\rm c},\free}(M;\bbZ)$
and, since we assume $M$ to be connected,\footnote{
The following constructions can be extended to disconnected manifolds by treating each connected component separately.
For the sake of simplicity we do not consider this more general scenario.
} we obtain an isomorphism $\H^m_{\rm c}(M;\bbR) / \H^m_{{\rm c},\free}(M;\bbZ) \simeq \bbT$.
Hence this bihomomorphism maps to the circle group and defines a $\bbT$-valued pairing
\begin{equation}\label{eqPairingDiffChar}
\ips{\cdot}{\cdot}_{\rm c}:\dH^k(M;\bbZ) \times \dH^{m-k+1}_{\rm c}(M;\bbZ) \longrightarrow \bbT~, \qquad 
(h,h'\,) \longmapsto (h \cdot h'\,) \mu~.
\end{equation}
Here we have also given an explicit expression for the isomorphisms above,
which should be interpreted as follows: For $h'\, \in \dH^{m-k+1}_{\rm c}(M;\bbZ)$
we choose a representative in the colimit (denoted with abuse of notation by the same symbol) 
$h'\,\in \dH^{m-k+1}(M,M \setminus K;\bbZ)$ for some compact $K\subseteq M$.
Then $h \cdot h'\, \in \dH^{m+1}(M,M \setminus K;\bbZ)$ is a differential character on relative
cycles and $(h \cdot h'\,)\mu$ denotes its evaluation on (some representative of)
the unique relative homology class $\mu \in \H_m(M,M \setminus K)$
which restricts to the orientation of $M$ for each point of $K$, cf.\ \cite[Lemma 3.27]{Hatcher}. 
The pairing \eqref{eqPairingDiffChar} is natural in the sense that
for all morphisms $f:M \to M^\prime$ in $\oMan_{m,\emb}$
the diagram
\begin{equation}\label{eqPairingDiffCharNat}
\xymatrix@C=45pt{
\dH^k(M^\prime;\bbZ) \times \dH^{m-k+1}_{\rm c}(M;\bbZ) \ar[r]^-{f^\ast \times \id} \ar[d]_-{\id \times f_\ast}
	&	\dH^k(M;\bbZ) \times \dH^{m-k+1}_{\rm c}(M;\bbZ) \ar[d]^-{\ips{\cdot}{\cdot}_{\rm c}}						\\
\dH^k(M^\prime;\bbZ) \times \dH^{m-k+1}_{\rm c}(M^\prime;\bbZ) \ar[r]_-{\ips{\cdot}{\cdot}_{\rm c}}
	&	\bbT
}
\end{equation}
commutes; this is a consequence of naturality of the module structure 
\eqref{eqCDiffCharModuleNat} and uniqueness of the relative homology 
class representing the orientation,  see \cite[Lemma 3.27]{Hatcher}. 
By partial evaluation, the pairing \eqref{eqPairingDiffChar} defines the two homomorphisms
\begin{subequations}\label{eqn:partevaldH}
\begin{flalign}
\dH^k(M;\bbZ) \longrightarrow \dH^{m-k+1}_{\rm c}(M;\bbZ)^\star ~,&~~h \longmapsto \ips{h}{\cdot}_{\rm c}~,\\[4pt]
 \dH^{m-k+1}_{\rm c}(M;\bbZ) \longrightarrow \dH^k(M;\bbZ)^\star ~,&~~h'\, \longmapsto \ips{\cdot}{h'\,}_{\rm c}~.
\end{flalign}
\end{subequations}

Let us introduce another $\bbT$-valued pairing
\begin{equation}
\label{eqPairingCoho} \ips{\cdot}{\cdot}_{\H}^{} \,:\, \H^k(M;\bbT) \times \H^{m-k}_{\rm c}(M;\bbZ) \longrightarrow \bbT~, \qquad 
(u,c)					\longmapsto (u \smile c)\mu ~,
\end{equation}
between cohomology and compactly supported cohomology, which
is defined as in \eqref{eqPairingDiffChar}
by choosing a representative of the colimit $c \in \H^{m-k}(M,M \setminus K;\bbZ)$, for some $K\subseteq M$ compact, 
and evaluating $u \smile c\in \H^{m}(M,M \setminus K;\bbT)$ 
on the unique element $\mu \in \H_m(M,M \setminus K)$  
which restricts to the orientation of $M$ at each point of $K$. 
Partial evaluation provides the homomorphisms
\begin{subequations}\label{eqn:partevalH}
\begin{flalign}
\label{eqn:parteval3}\H^k(M;\bbT) \longrightarrow \H^{m-k}_{\rm c}(M;\bbZ)^\star ~,&~~ u\longmapsto \ips{u}{\cdot}_{\H}^{}~,\\[4pt]
\label{eqn:parteval4}\H^{m-k}_{\rm c}(M;\bbZ) \longrightarrow \H^k(M;\bbT)^\star ~,&~~c\longmapsto \ips{\cdot}{c}_{\H}^{}~.
\end{flalign}
\end{subequations}
\begin{lem}\label{lem:auxisosH}
Let $M$ be an object of $\oMan_{m,\emb}$. Then the homomorphism \eqref{eqn:parteval3} is an isomorphism.
For $M$ of finite-type, the homomorphism \eqref{eqn:parteval4} is also an isomorphism.
\end{lem}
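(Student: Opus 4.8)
The plan is to convert the geometric pairing \eqref{eqPairingCoho} into the purely algebraic Kronecker evaluation pairing, and then read off both statements from the universal coefficient theorem together with Poincar\'e duality. The bridge is the cup--cap adjunction: for $u\in\H^k(M;\bbT)$ and a representative $c\in\H^{m-k}(M,M\setminus K;\bbZ)$ of a class in $\H^{m-k}_{\rm c}(M;\bbZ)$, one has
\[
\ips{u}{c}_{\H}^{}=(u\smile c)\mu=\big\langle u,\,c\frown\mu\big\rangle~,
\]
where $c\frown\mu\in\H_k(M;\bbZ)$ and $\big\langle\cdot,\cdot\big\rangle$ denotes evaluation $\H^k(M;\bbT)\times\H_k(M;\bbZ)\to\bbT$. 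As $K$ ranges over $\mcK_M$, the cap products with the local orientation classes are compatible under the structure maps of the colimit (by uniqueness of the orientation class, cf.\ \cite[Lemma 3.27]{Hatcher}) and assemble into the Poincar\'e duality isomorphism $\mathrm{PD}\colon\H^{m-k}_{\rm c}(M;\bbZ)\xrightarrow{\simeq}\H_k(M;\bbZ)$ (cf.\ \cite[Theorem 3.35]{Hatcher}), so that $\ips{u}{c}_{\H}^{}=\big\langle u,\mathrm{PD}(c)\big\rangle$.

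First I would dispose of \eqref{eqn:parteval3}. Since $\bbT$ is divisible, hence an injective $\bbZ$-module, the $\mathrm{Ext}$-term in the universal coefficient theorem vanishes and evaluation yields a natural isomorphism $\Phi\colon\H^k(M;\bbT)\xrightarrow{\simeq}\Hom(\H_k(M;\bbZ),\bbT)$, $u\mapsto\big\langle u,\cdot\big\rangle$. Under the identification above, the homomorphism \eqref{eqn:parteval3} sends $u\mapsto\big\langle u,\mathrm{PD}(\cdot)\big\rangle=\Phi(u)\circ\mathrm{PD}$, that is, it is the composite $\Hom(\mathrm{PD},\bbT)\circ\Phi$. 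As a composite of two isomorphisms it is an isomorphism, and no finiteness hypothesis is used; this matches the fact that the first claim is asserted for every object of $\oMan_{m,\emb}$.

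For \eqref{eqn:parteval4} the same rewriting gives, as a function of $u$, $\ips{u}{c}_{\H}^{}=\Phi(u)\big(\mathrm{PD}(c)\big)$, so that \eqref{eqn:parteval4} factors as $\Hom(\Phi,\bbT)\circ\mathrm{ev}\circ\mathrm{PD}$, where
\[
\mathrm{ev}\colon\H_k(M;\bbZ)\longrightarrow\Hom\big(\Hom(\H_k(M;\bbZ),\bbT),\bbT\big)~,\qquad a\longmapsto\big(\varphi\mapsto\varphi(a)\big)
\]
is the canonical biduality map. Both $\mathrm{PD}$ and $\Hom(\Phi,\bbT)$ are isomorphisms, so \eqref{eqn:parteval4} is an isomorphism precisely when $\mathrm{ev}$ is.

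This last reduction is where the finite-type hypothesis enters and is the step I expect to be the main obstacle: under finite-type, $\H_k(M;\bbZ)$ is finitely generated, and for such groups the canonical map into the Pontryagin double dual is an isomorphism, whereas it genuinely fails once the free rank is infinite, so the hypothesis cannot simply be dropped at this point. The remaining care is in the correct formulation of reflexivity for the character group $\Hom(-,\bbT)$ on finitely generated abelian groups, which I would invoke from the auxiliary results of Appendix~\ref{app:axuiliary}; everything else (compatibility of the cup--cap formula with the colimit defining $\H^{m-k}_{\rm c}$, and naturality of $\Phi$ and $\mathrm{PD}$) is routine.
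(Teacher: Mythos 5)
Your proof is correct and follows essentially the same route as the paper: Poincar\'e duality combined with the universal coefficient theorem (divisibility of $\bbT$ killing the $\mathrm{Ext}$ term) for the first claim, and reflexivity of Pontryagin duality on finitely generated Abelian groups for the second; your explicit factorization through the cup--cap adjunction merely makes precise the identification the paper leaves implicit. One small correction: the reflexivity of $\Hom(-,\bbT)$ on finitely generated groups is not among the results of Appendix~\ref{app:axuiliary} (which concerns forms with integral periods) --- the paper simply invokes it as a standard fact.
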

\begin{proof}
The first statement follows from Poincar\'e duality $\H_{\rm c}^{m-k}(M;\bbZ)\simeq \H_k(M)$, 
see e.g.~\cite[Theorem 3.35]{Hatcher}, and the fact that 
\begin{equation}\label{eqn:tmp}
\H^k(M;\bbT) \simeq \Hom(\H_k(M),\bbT)=\H_k(M)^\star~,
\end{equation}
which is a consequence of the universal coefficient theorem for cohomology and divisibility of $\bbT$.
The second statement follows by taking the Pontryagin dual of \eqref{eqn:tmp} and recalling that Pontryagin
duality is reflexive for finitely generated Abelian groups, in particular
on all (co)homology groups of manifolds $M$ of finite-type.
\end{proof}

\begin{theo}\label{thmCDiffCharIso}
Let $M$ be an object of $\oMan_{m,\emb}$. Then the diagram 
\begin{equation}
\xymatrix@C=35pt{
0 \ar[r]	&	\H^{m-k}(M;\bbT) \ar[r]^-\kappa \ar[d]_\simeq
				&	\dH^{m-k+1}(M;\bbZ) \ar[r]^-\cu \ar[d]_\simeq
						&	 \OmegaZ^{m-k+1}(M) \ar[r] \ar[d]^\simeq							&	0	\\
0 \ar[r]	&	\H^k_{\rm c}(M;\bbZ)^\star \ar[r]_-{\ch^\star}
				&	\dH^k_{\rm c}(M;\bbZ)^\star_\infty \ar[r]_-{\iota^\star}
						&	\Big(\, \frac{\Omega^{k-1}_{\rm c}(M)}
								{\Omega^{k-1}_{{\rm c},\bbZ}(M)}\, \Big)^\star_\infty \ar[r]					&	0 
}
\end{equation}
commutes, its rows are short exact sequences and the vertical arrows are natural isomorphisms.
For $M$ of finite-type, the diagram
\begin{equation}
\xymatrix@C=35pt{
0 \ar[r]	&	\frac{\Omega^{m-k}_{\rm c}(M)}{\Omega^{m-k}_{{\rm c},\bbZ}(M)} \ar[r]^-\iota \ar[d]_\simeq
				&	\dH^{m-k+1}_{\rm c}(M;\bbZ) \ar[r]^-\ch \ar[d]_\simeq
						&	 \H^{m-k+1}_{\rm c}(M;\bbZ) \ar[r] \ar[d]^\simeq							&	0	\\
0 \ar[r]	&	\OmegaZ^k(M)^\star_\infty \ar[r]_-{\cu^\star}
				&	\dH^k(M;\bbZ)^\star_\infty \ar[r]_-{\kappa^\star}
						&	\H^{k-1}(M;\bbT)^\star \ar[r]										&	0
}
\end{equation}
commutes, its rows are short exact sequences and the vertical arrows are natural isomorphisms.
In both diagrams the vertical arrows are the partial evaluations given in \eqref{eqn:partevalOmega1},
\eqref{eqn:partevaldH} and \eqref{eqn:partevalH}.
\end{theo}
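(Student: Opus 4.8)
The plan is to realize both diagrams as Pontryagin-dual avatars of the diagrams \eqref{eqDiffCharDia} and \eqref{eqCDiffCharDia}, and then to promote the outer vertical partial evaluations, already known to be isomorphisms by Lemmas \ref{lem:auxisosOmega} and \ref{lem:auxisosH}, to the middle one by the five lemma. For the first diagram the top row is exactly the middle row of \eqref{eqDiffCharDia} in degree $m-k+1$, hence short exact. The bottom row I would obtain by applying $\Hom(-,\bbT)$ to the middle column $0 \to \Omega^{k-1}_{\rm c}(M)/\Omega^{k-1}_{{\rm c},\bbZ}(M) \xrightarrow{\iota} \dH^k_{\rm c}(M;\bbZ) \xrightarrow{\ch} \H^k_{\rm c}(M;\bbZ) \to 0$ of \eqref{eqCDiffCharDia}. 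Since $\bbT$ is divisible, dualization is exact, and restricting the middle term along $\dH^k_{\rm c}(M;\bbZ)^\star_\infty = (\iota^\star)^{-1}(\Omega^{k-1}_{\rm c}(M)/\Omega^{k-1}_{{\rm c},\bbZ}(M))^\star_\infty$ preserves short-exactness: the kernel of $\iota^\star$ equals $\im\ch^\star$ and lies automatically in the smooth dual (it maps to $0$), while surjectivity onto the smooth dual is immediate from the definition of the preimage.

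The heart of the argument is the commutativity of the two squares, which I would establish by unwinding the pairing \eqref{eqPairingDiffChar} through the module-compatibility identities \eqref{eqCCompatibility}. For the right square, take $h \in \dH^{m-k+1}(M;\bbZ)$ and $[\alpha] \in \Omega^{k-1}_{\rm c}(M)/\Omega^{k-1}_{{\rm c},\bbZ}(M)$; the relation $h \cdot \iota\,[\alpha] = (-1)^{m-k+1}\,\iota\,[\cu\,h \wedge \alpha]$, combined with the observation that the canonical isomorphism $\dH^{m+1}_{\rm c}(M;\bbZ) \simeq \bbT$ sends $\iota\,[\beta]$ to $\int_M \beta \bmod \bbZ$ (because the relative orientation class $\mu$ restricts to the fundamental class on the support of the compactly supported form $\beta$), identifies $\iota^\star \langle h, \cdot \rangle_{\rm c}$ with the form pairing $\langle \cu\,h, \cdot \rangle_\Omega$ of \eqref{eqn:partialevalOmegaZ}. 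This computation also shows that the middle partial evaluation lands in the smooth dual, as required. For the left square I would use $\kappa\,u \cdot h'\, = \kappa(u \smile \ch\,h'\,)$ and the fact that the $\mu$-evaluation of a flat character $\kappa(v)$ reproduces the cup-product cohomology pairing, so that $\ch^\star \langle u, \cdot \rangle_\H$ coincides with $\langle \kappa\,u, \cdot \rangle_{\rm c}$.

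The second diagram is entirely symmetric: its top row is the middle column of \eqref{eqCDiffCharDia} in degree $m-k+1$, and its bottom row is the smooth-dual restriction of $\Hom(-,\bbT)$ applied to the middle row of \eqref{eqDiffCharDia} in degree $k$. The commutativity of the squares there rests on the remaining identities $h \cdot \kappa\,\upsilon = (-1)^k\,\kappa(\ch\,h \smile \upsilon)$ and $\iota\,[A] \cdot h'\, = \iota\,[A \wedge \cu\,h'\,]$ from \eqref{eqCCompatibility}, together with the same $\mu$-evaluation dictionary. The difference is that the outer vertical isomorphisms now require the finite-type statements of Lemmas \ref{lem:auxisosOmega} and \ref{lem:auxisosH}, which is precisely where the finite-type hypothesis enters. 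In both cases, once the squares commute and the rows are short exact, the five lemma yields that the middle vertical arrow is an isomorphism; naturality of all three vertical arrows follows from naturality of the three pairings (for $\langle \cdot, \cdot \rangle_{\rm c}$ this is \eqref{eqPairingDiffCharNat}) and from the fact that the smooth-dual subfunctors are preserved by push-forward and pull-back.

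The step I expect to be the main obstacle is the bookkeeping in these commutativity checks. One must translate the abstract pairing $\langle \cdot, \cdot \rangle_{\rm c}$, defined via the module product followed by evaluation on the relative orientation class $\mu$, into the concrete integral pairing $\langle \cdot, \cdot \rangle_\Omega$ and the cup-product pairing $\langle \cdot, \cdot \rangle_\H$; in particular one has to verify carefully that evaluating a topologically trivial (respectively flat) compactly supported character on $\mu$ returns $\int_M$ (respectively the cohomological evaluation on the fundamental class), and to track the signs produced by \eqref{eqCCompatibility} consistently throughout.
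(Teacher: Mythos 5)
Your architecture is exactly the paper's: identify the top rows with the middle row of \eqref{eqDiffCharDia} and the middle column of \eqref{eqCDiffCharDia}, realize the bottom rows as smooth-dual restrictions of the $\Hom(-,\bbT)$-duals of the complementary sequences, invoke Lemmas \ref{lem:auxisosOmega} and \ref{lem:auxisosH} for the outer vertical arrows, check commutativity via \eqref{eqCCompatibility}, and finish with the five lemma. Your treatment of the bottom row of the \emph{first} diagram is correct and is the paper's argument verbatim: since $\dH^k_{\rm c}(M;\bbZ)^\star_\infty$ is \emph{defined} as $(\iota^\star)^{-1}$ of the smooth dual of the form quotient, surjectivity of $\iota^\star$ onto the smooth dual is tautological and $\im \ch^\star = \ker \iota^\star$ automatically consists of smooth characters. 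Your sign bookkeeping for the commutativity of the squares is also consistent with the conventions in \eqref{eqPairingFormPre} and \eqref{eqCCompatibility}.

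The one genuine gap is the claim that the second diagram is ``entirely symmetric,'' in particular that its bottom row $0 \to \OmegaZ^k(M)^\star_\infty \to \dH^k(M;\bbZ)^\star_\infty \to \H^{k-1}(M;\bbT)^\star \to 0$ is exact by the same restriction argument. It is not: the smooth Pontryagin dual $\dH^k(M;\bbZ)^\star_\infty$ is defined as a preimage under $\iota^\star$, and the sequence being dualized here is the one built from $\kappa$ and $\cu$, not from $\iota$ and $\ch$. Consequently none of the three exactness assertions is formal. One must show (i) that $\cu^\star$ carries $\OmegaZ^k(M)^\star_\infty$ into $\dH^k(M;\bbZ)^\star_\infty$ (this part is an easy integration-by-parts check), (ii) that every $\psi \in \dH^k(M;\bbZ)^\star_\infty$ with $\psi \circ \kappa = 0$ comes from a \emph{smooth} character of $\OmegaZ^k(M)$ — i.e.\ that smoothness of $\psi \circ \iota$, which only sees $\psi$ on exact curvatures, propagates to a compactly supported form representing $\psi \circ \cu$ on all of $\OmegaZ^k(M)$ — and (iii) that $\kappa^\star$ remains surjective after restricting to smooth characters. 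Points (ii) and (iii) are nontrivial and are where the finite-type hypothesis enters beyond the outer vertical arrows; the paper does not prove them either but explicitly outsources them to \cite[Theorem 4.3]{Becker:2014tla}. Your proof needs either that citation or a separate argument at this step.
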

\begin{proof}
The top row in the first diagram is the middle row of \eqref{eqDiffCharDia}
and the top row in the second diagram is the middle column in 
\eqref{eqCDiffCharDia}. Hence they are short exact sequences.
The bottom row in the second diagram is a short exact sequence by
\cite[Theorem 4.3]{Becker:2014tla}. We show that the bottom row
in the first diagram is a short exact sequence. Now $\ch^\star : \H^k_{\rm c}(M;\bbZ)^\star \to \dH^k_{\rm c}(M;\bbZ)^\star$ 
maps (injectively) to smooth group characters as a consequence of $\ch \circ \iota = 0$
and $\iota^\star: \dH^k_{\rm c}(M;\bbZ)^\star \to (\Omega^{k-1}_{\rm c}(M) / \Omega^{k-1}_{{\rm c},\bbZ}(M))^\star$ 
maps (surjectively because of 
Definition \ref{defSmoothPontrDualDiffChar}) smooth group characters to smooth group characters.
Exactness at the middle object follows from the fact that
$\Hom(-,\bbT): \Ab^\op \to \Ab$ is an exact functor as $\bbT$ is divisible.
\sk

By Lemma \ref{lem:auxisosOmega} and Lemma \ref{lem:auxisosH}, 
the left and right vertical arrows in both diagrams are isomorphisms.
Commutativity of both diagrams can be shown by using the properties \eqref{eqCCompatibility}
of the module structure on compactly supported differential characters.
Using the five lemma, we conclude that the middle vertical 
arrow in each diagram is also an isomorphism.
\end{proof}

This theorem immediately implies 
\begin{cor}[Smooth Pontryagin duality for differential characters]\label{corPontryaginDuality}
The partial evaluations in \eqref{eqn:partevaldH} define a natural isomorphism
between the functors
\begin{flalign}
\dH^{m-k+1}(-;\bbZ) : \oMan_{m,\emb}^\op \longrightarrow \Ab~,& \qquad 
\dH^k_{\rm c}(-;\bbZ)^\star_\infty : \oMan_{m,\emb}^\op \longrightarrow \Ab~,\\
\intertext{and a natural isomorphism between the functors}
\dH^{m-k+1}_{\rm c}(-;\bbZ) : \oMan_{m,\emb,\mathrm{ft}} \longrightarrow \Ab~,& \qquad
\dH^k(-;\bbZ)^\star_\infty : \oMan_{m,\emb,\mathrm{ft}} \longrightarrow \Ab~,
\end{flalign}
where $\oMan_{m,\emb,\mathrm{ft}}$ is the full subcategory of $\oMan_{m,\emb}$
whose objects are manifolds of finite-type.
\end{cor}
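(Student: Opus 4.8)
The plan is to read the corollary off Theorem~\ref{thmCDiffCharIso}: that theorem already identifies the two families of partial evaluations as pointwise isomorphisms onto the respective smooth Pontryagin duals, so the only remaining task is to confirm that these pointwise isomorphisms assemble into natural transformations of the functors in question.

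First I would pin down the components. For the first claimed isomorphism, the component at an object $M$ of $\oMan_{m,\emb}$ is the partial evaluation $\dH^{m-k+1}(M;\bbZ) \to \dH^k_{\rm c}(M;\bbZ)^\star$, $h \mapsto \ips{h}{\cdot}_{\rm c}$, obtained from \eqref{eqn:partevaldH} after the relabelling $k \leftrightarrow m-k+1$ in the pairing \eqref{eqPairingDiffChar}; the first diagram of Theorem~\ref{thmCDiffCharIso} guarantees that this map takes values in the smooth dual $\dH^k_{\rm c}(M;\bbZ)^\star_\infty$ and is an isomorphism onto it. For the second isomorphism, restricted to the finite-type subcategory $\oMan_{m,\emb,\mathrm{ft}}$, the component at $M$ is the partial evaluation $\dH^{m-k+1}_{\rm c}(M;\bbZ) \to \dH^k(M;\bbZ)^\star$, $h' \mapsto \ips{\cdot}{h'}_{\rm c}$, which the second diagram identifies as an isomorphism onto $\dH^k(M;\bbZ)^\star_\infty$.

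Next I would check naturality. Both the source $\dH^{m-k+1}(-;\bbZ)$ and the target $\dH^k_{\rm c}(-;\bbZ)^\star_\infty$ are contravariant on $\oMan_{m,\emb}$, the latter via $f^\ast := \Hom(f_\ast,\bbT)$. For a morphism $f: M \to M^\prime$, the square expressing naturality of the component maps amounts precisely to the identity $\ips{f^\ast h}{h'}_{\rm c} = \ips{h}{f_\ast h'}_{\rm c}$, valid for all $h \in \dH^{m-k+1}(M^\prime;\bbZ)$ and $h' \in \dH^k_{\rm c}(M;\bbZ)$, which is the content of the commuting diagram \eqref{eqPairingDiffCharNat}. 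That the maps respect the passage to smooth characters is consistent with the functor structure by the functoriality discussion following Definition~\ref{defSmoothPontrDualDiffChar}, where it is shown that $f^\ast$ and $f_\ast$ preserve smoothness. The same identity, now regarded as expressing naturality of the second partial evaluation, gives naturality of the second family, this time between the covariant functors $\dH^{m-k+1}_{\rm c}(-;\bbZ)$ and $\dH^k(-;\bbZ)^\star_\infty$ on $\oMan_{m,\emb,\mathrm{ft}}$.

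Having pointwise isomorphisms (Theorem~\ref{thmCDiffCharIso}) that are compatible with the functorial maps (diagram~\eqref{eqPairingDiffCharNat}), I conclude that each family is a natural isomorphism of functors, as claimed. There is no genuine obstacle here; the only thing demanding attention is the bookkeeping of variance and the role of the finite-type hypothesis, which enters solely through Theorem~\ref{thmCDiffCharIso} (ultimately via the reflexivity of Pontryagin duality for finitely generated groups used in Lemma~\ref{lem:auxisosH}) and therefore confines the second isomorphism to $\oMan_{m,\emb,\mathrm{ft}}$ while leaving the first valid on all of $\oMan_{m,\emb}$.
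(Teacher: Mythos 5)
Your proposal is correct and follows essentially the same route as the paper, which simply deduces the corollary from Theorem \ref{thmCDiffCharIso} (whose statement already asserts that the vertical partial-evaluation arrows are natural isomorphisms). Your additional unpacking of the naturality squares via \eqref{eqPairingDiffCharNat} and the variance bookkeeping is accurate but is content the paper treats as already contained in the theorem.
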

\begin{cor}[Smooth Pontryagin duality: Pairing version]\label{corNonDegC}
Let $M$ be an oriented and connected $m$-dimensional manifold of finite-type. Then the pairing 
\begin{equation}
\ips{\cdot}{\cdot}_{\rm c}:\dH^k(M;\bbZ) \times \dH^{m-k+1}_{\rm c}(M;\bbZ) \longrightarrow \bbT
\end{equation}
introduced in \eqref{eqPairingDiffChar} is weakly non-degenerate. 
\end{cor}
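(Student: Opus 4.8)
The plan is to read off weak non-degeneracy directly from Theorem~\ref{thmCDiffCharIso}, without any further computation. By definition, weak non-degeneracy of the pairing \eqref{eqPairingDiffChar} is equivalent to the injectivity of the two partial evaluation homomorphisms in \eqref{eqn:partevaldH}. Hence the whole proof reduces to establishing these two injectivity statements, and both are already packaged into the isomorphisms of Theorem~\ref{thmCDiffCharIso}. The only point requiring a little care is the bookkeeping of degrees and the elementary observation that an isomorphism onto a \emph{subgroup} of a Pontryagin dual is in particular injective as a map into the full dual.

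First I would treat the partial evaluation $\dH^k(M;\bbZ) \to \dH^{m-k+1}_{\rm c}(M;\bbZ)^\star$, $h \mapsto \ips{h}{\cdot}_{\rm c}$. After relabeling $k \mapsto m-k+1$ in Theorem~\ref{thmCDiffCharIso}, this map is precisely the middle vertical arrow of the first diagram there, which is asserted to be a natural isomorphism onto the smooth Pontryagin dual $\dH^{m-k+1}_{\rm c}(M;\bbZ)^\star_\infty$. Since $\dH^{m-k+1}_{\rm c}(M;\bbZ)^\star_\infty \subseteq \dH^{m-k+1}_{\rm c}(M;\bbZ)^\star$ is a subgroup, being an isomorphism onto it forces injectivity into the full dual. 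I would emphasize that this step uses no finite-type hypothesis, as the first diagram of Theorem~\ref{thmCDiffCharIso} holds for every object of $\oMan_{m,\emb}$.

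Next I would handle the second partial evaluation $\dH^{m-k+1}_{\rm c}(M;\bbZ) \to \dH^k(M;\bbZ)^\star$, $h' \mapsto \ips{\cdot}{h'}_{\rm c}$. This is exactly the middle vertical arrow of the second diagram in Theorem~\ref{thmCDiffCharIso}, which for $M$ of finite-type is a natural isomorphism onto the smooth Pontryagin dual $\dH^k(M;\bbZ)^\star_\infty \subseteq \dH^k(M;\bbZ)^\star$, and therefore injective by the same reasoning. This is the only place where finite-type is invoked, which explains its appearance in the hypotheses of the corollary. With both partial evaluations shown to be injective, the pairing \eqref{eqPairingDiffChar} is weakly non-degenerate. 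I do not expect any genuine obstacle here: the substantive work lies entirely in Theorem~\ref{thmCDiffCharIso}, and this corollary is a formal consequence once one recognizes weak non-degeneracy as the conjunction of the two injectivity assertions.
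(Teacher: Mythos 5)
Your proposal is correct and follows essentially the same route as the paper, which presents this corollary as an immediate consequence of Theorem~\ref{thmCDiffCharIso}: weak non-degeneracy is the injectivity of the two partial evaluations in \eqref{eqn:partevaldH}, and each is the middle vertical arrow of one of the two diagrams there (the first after the relabeling $k\mapsto m-k+1$), hence an isomorphism onto a subgroup of the relevant Pontryagin dual and in particular injective. Your remark that only the second injectivity requires the finite-type hypothesis matches the structure of the theorem.
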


\begin{rem}\label{remPontryaginIso}
By introducing suitable pairings, one can easily extend Theorem \ref{thmCDiffCharIso} 
to the full diagrams for (compactly supported) differential characters 
given in \eqref{eqDiffCharDia} and \eqref{eqCDiffCharDia}. 
For example, the second diagram in Theorem \ref{thmCDiffCharIso} (for $M$ of finite-type)
extends to the three-dimensional commutative diagram
\begin{equation}
{\small
\xymatrix@C=0pt@R=1pt{
		&&	0 \ar@{.>}[dd]	&&	0 \ar@{.>}[dd]	&&	0 \ar@{.>}[dd]													\\
&			&&	0 \ar[dd]	&&	0 \ar[dd]	&&	0 \ar[dd]												\\
0 \ar@{.>}[rr]	&&	\frac{\H^{m-k}_{\rm c}(M;\bbR)}{\H_{{\rm c},\free}^{m-k}(M;\bbZ)} \ar@{.>}'[r][rr] \ar@{.>}'[d][dd] \ar[rd]^-\simeq
						&&	\frac{\Omega^{m-k}_{\rm c}(M)}{\Omega^{m-k}_{{\rm c},\bbZ}(M)} 
								\ar@{.>}'[r][rr] \ar@{.>}'[d][dd]^-\iota \ar[rd]^-\simeq
										&&	\dd \Omega^{m-k}_{\rm c}(M) \ar@{.>}'[r][rr] \ar@{.>}'[d][dd] \ar[rd]^-\simeq	
																									&&	0\\
&	0 \ar[rr]	&&	\Hf^k(M;\bbZ)^\star \ar[rr] \ar[dd]
							&&	\Omega^k_\bbZ(M)^\star_\infty \ar[rr] \ar[dd]^(.3){\cu^\star}
											&&	(\dd \Omega^{k-1}(M))^\star_\infty \ar[rr] \ar[dd]	&&	0\\
0 \ar@{.>}[rr]	&&	\H^{m-k}_{\rm c}(M;\bbT) \ar@{.>}'[r][rr]_-\kappa \ar@{.>}'[d][dd] \ar[rd]^-\simeq
						&&	\dH^{m-k+1}_{\rm c}(M;\bbZ) \ar@{.>}'[r][rr]_-\cu \ar@{.>}'[d][dd]^-\ch \ar[rd]^-\simeq
										&&	\Omega^{m-k+1}_{{\rm c},\bbZ}(M)
												\ar@{.>}'[r][rr] \ar@{.>}'[d][dd] \ar[rd]^-\simeq					&&	0\\
&	0 \ar[rr]	&&	\H^k(M;\bbZ)^\star \ar[rr]_(.4){\ch^\star} \ar[dd]	
							&&	\dH^k(M;\bbZ)^\star_\infty
									\ar[rr]_(.4){\iota^\star} \ar[dd]^(.3){\kappa^\star}
											&&	\left(\frac{\Omega^{k-1}(M)}{\Omega^{k-1}_\bbZ(M)}
													\right)^\star_\infty \ar[rr] \ar[dd]				&&	0\\
0 \ar@{.>}[rr]	&&	\H^{m-k+1}_{{\rm c},\tor}(M;\bbZ) \ar@{.>}'[r][rr] \ar@{.>}'[d][dd] \ar[rd]^-\simeq
						&&	\H^{m-k+1}_{\rm c}(M;\bbZ) \ar@{.>}'[r][rr] \ar@{.>}'[d][dd] \ar[rd]^-\simeq
										&&	\H^{m-k+1}_{{\rm c},\free}(M;\bbZ)
												\ar@{.>}'[r][rr] \ar@{.>}'[d][dd] \ar[rd]^-\simeq					&&	0\\
&	0 \ar[rr]	&&	\Ht^k(M;\bbZ)^\star \ar[rr] \ar[dd]
							&&	\H^{k-1}(M;\bbT)^\star \ar[rr] \ar[dd]
											&&	\left(\frac{\H^{k-1}(M;\bbR)}{\Hf^{k-1}(M;\bbZ)}
													\right)^\star \ar[rr] \ar[dd]						&&	0\\
		&&	0		&&	0		&&	0																	\\
&			&&	0			&&	0			&&	0
}}
\end{equation}
where all diagonal arrows are isomorphisms, the foreground face is the smooth Pontryagin dual of 
\eqref{eqDiffCharDia} and the background face is given by \eqref{eqCDiffCharDia}.
\end{rem}

\begin{rem}\label{remNonSub}
We compare our results to \cite[Theorem 8.4]{HLZ}. 
As mentioned in Remark~\ref{remHLZ}, de~Rham-Federer characters 
with compact support are introduced in \cite{HLZ} by means of 
de~Rham-Federer currents. The group of de~Rham-Federer characters 
with compact support is isomorphic to the smooth Pontryagin dual 
of the group of de~Rham-Federer characters, cf.~\cite[Theorem 8.7]{HLZ}.
According to \cite[Theorem 4.1]{HLZ}, the latter is isomorphic to the group 
$\dH^\sharp(M;\bbZ)$ of Cheeger-Simons differential characters, therefore
we deduce from Theorem~\ref{thmCDiffCharIso} that the group of 
de~Rham-Federer characters with compact support is isomorphic to the 
group $\dH^\sharp_{\rm c}(M;\bbZ)$ of compactly supported differential characters 
introduced in Definition~\ref{defCDiffChar}.
Together with Remark \ref{remNonInj}, this contradicts \cite[Theorem 8.4]{HLZ}, 
which states that the group of de~Rham-Federer characters with compact 
support is isomorphic to a \emph{subgroup} of the group of 
Cheeger-Simons differential characters. 
As already observed in Remark \ref{remHLZ}, the failure of the homomorphism in \cite[Theorem 8.4]{HLZ} 
to be an isomorphism does not affect the rest of \cite{HLZ}. 
\end{rem}

\subsection{Pairing between differential characters with compact support}\label{subPairing}
We conclude by defining a $\bbT$-valued pairing on
differential characters with compact support and describe its properties.
Let $M$ be any object of $\oMan_{m,\emb}$. 
Using the homomorphism $I :\dH^k_{\rm c}(M;\bbZ) \to \dH^k(M;\bbZ)$ defined in
\eqref{eqCDiffCharToDiffChar}, we introduce the pairing
\begin{equation}\label{eqPairingCDiffChar}
\ips{I \cdot}{\cdot}_{\rm c}: \dH^k_{\rm c}(M;\bbZ) \times \dH^{m-k+1}_{\rm c}(M;\bbZ)
															\longrightarrow
                                                                                                                        \bbT~,
                                                                                                                        \qquad 
(h,h'\,)														\longmapsto		\ips{I h}{h'\,}_{\rm c}~
\end{equation}
on compactly supported differential characters.
\begin{propo}
The pairing \eqref{eqPairingCDiffChar} is graded symmetric, i.e.\
\begin{flalign}
\ips{I h}{h'\,}_{\rm c} = (-1)^{k\,(m-k+1)}\, \ips{I h'\,}{h}_{\rm c}~,
\end{flalign}
for all $h\in \dH^k_{\rm c}(M;\bbZ)$ and $h'\,\in \dH^{m-k+1}_{\rm c}(M;\bbZ)$.
\end{propo}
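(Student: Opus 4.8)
The plan is to reduce the graded symmetry of the pairing \eqref{eqPairingCDiffChar} to the graded commutativity of the relative product of Remark~\ref{remRelDiffCharMultiplication}, evaluated on the fundamental relative homology class. First I would fix $h \in \dH^k_{\rm c}(M;\bbZ)$ and $h'\, \in \dH^{m-k+1}_{\rm c}(M;\bbZ)$ and, exploiting cofinality of $\mcK_M$, choose a single compact subset $K \subseteq M$ together with representatives $h_K \in \dH^k(M,M \setminus K;\bbZ)$ and $h'_K \in \dH^{m-k+1}(M,M \setminus K;\bbZ)$ of $h$ and $h'\,$ in the respective colimits. Let $\mu \in \H_m(M,M \setminus K)$ be the orientation class used in \eqref{eqPairingDiffChar}. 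Since $\dim M = m$, the curvature of any character in $\dH^{m+1}(M,M \setminus K;\bbZ)$ vanishes, so such a character descends to $\H_m(M,M \setminus K)$ and its evaluation on $\mu$ is well-defined.

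The central step is to establish the two identities $\ips{I h}{h'\,}_{\rm c} = (h_K \cdot h'_K)\,\mu$ and $\ips{I h'\,}{h}_{\rm c} = (h'_K \cdot h_K)\,\mu$, where on the right-hand sides the products are taken in the graded-commutative ring $\dH^\sharp(M,M \setminus K;\bbZ)$ of Remark~\ref{remRelDiffCharMultiplication}. To prove the first identity I would recall that $I h$ is represented by the image of $h_K$ under the natural homomorphism \eqref{eqI}, which is precomposition with the quotient $C_\sharp(M) \to C_\sharp(M,M \setminus K)$. Consequently the lift data $(\tilde h,c_h)$ for $I h \in \dH^k(M;\bbZ)$ may be chosen as the images of the relative lift data $(\tilde h_K,c_{h_K})$ under the inclusion of relative cochains $C^\sharp(M,M \setminus K;\bbR) \hookrightarrow C^\sharp(M;\bbR)$. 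Inserting these into the formula \eqref{eqRelDiffCharModule}, which simultaneously computes the module product $I h \cdot h'_K$ representing $I h \cdot h'\, \in \dH^{m+1}_{\rm c}(M;\bbZ)$ and the relative product $h_K \cdot h'_K$, and using naturality of the cup product $\smile$ together with the support-preservation of $B$ discussed in Subsection~\ref{subRelDiffCharModule}, one finds that the two cochains correspond under this inclusion. Hence $I h \cdot h'_K$ and $h_K \cdot h'_K$ agree as characters on $Z_m(M,M \setminus K)$, and evaluation on $\mu$ yields the first identity; the second follows verbatim after interchanging the roles of $h$ and $h'\,$.

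It then remains to invoke graded commutativity of the relative product (Remark~\ref{remRelDiffCharMultiplication} with $S = S^\prime = M \setminus K$), which gives $h_K \cdot h'_K = (-1)^{k\,(m-k+1)}\, h'_K \cdot h_K$ in $\dH^{m+1}(M,M \setminus K;\bbZ)$. Evaluating this equality on $\mu$ and substituting the two identities above produces $\ips{I h}{h'\,}_{\rm c} = (-1)^{k\,(m-k+1)}\, \ips{I h'\,}{h}_{\rm c}$, as claimed.

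I expect the main obstacle to be the cochain-level comparison in the central step, that is, verifying that the module-structure product $I h \cdot h'_K$ and the relative product $h_K \cdot h'_K$ are represented by the same cochain once $I h$ is expressed through the relative lift data of $h_K$. This rests on the compatibility of the inclusion of relative cochains into absolute cochains with the cup product, and on the fact that every cochain entering \eqref{eqRelDiffCharModule} is relative to $M \setminus K$, which is precisely where the support-preservation of $\smile$ and $B$ is needed.
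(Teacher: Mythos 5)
Your proposal is correct and follows essentially the same route as the paper: both reduce the claim to the identity $Ih\cdot h'=h\cdot h'$ (module product versus relative ring product) at the level of representatives over a common compact $K$, verified via the cochain formula \eqref{eqRelModuleAction}, and then invoke graded commutativity of the ring structure of Remark~\ref{remRelDiffCharMultiplication} before evaluating on the orientation class $\mu$. The extra detail you supply on choosing the lift data of $Ih$ as the image of the relative lift data of $h_K$ is precisely the computation the paper leaves implicit.
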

\begin{proof}
This result is a consequence of the graded commutative (possibly non-unital) ring structure 
on relative differential cohomology, see Remark \ref{remRelDiffCharMultiplication}:
Given $h \in \dH^k_{\rm c}(M;\bbZ)$ and $h'\, \in \dH^{m-k+1}_{\rm c}(M;\bbZ)$
there exists $K\subseteq M$ compact such that $h \in \dH^k(M,M \setminus K;\bbZ)$ 
and $h'\, \in \dH^{m-k+1}(M,M \setminus K;\bbZ)$ are representatives in the
corresponding colimits. 
Using \eqref{eqRelModuleAction} one shows the identities 
$I h \cdot h'\, = h \cdot h'\,$ and $I h'\, \cdot h = h'\, \cdot h$ 
of elements in $\dH^{m+1}(M,M \setminus K;\bbZ)$, 
where on the left hand sides $\cdot$ denotes the $\dH^\sharp(M;\bbZ)$-module structure 
on $\dH^\sharp(M,M\setminus K;\bbZ)$ (see \eqref{eqRelDiffCharModule})
and on the right hand sides $\cdot$ denotes the ring structure
on $\dH^\sharp(M,M\setminus K;\bbZ)$ 
(see \eqref{eqRelDiffCharProd} for $S = S^\prime = M \setminus K$). 
As a consequence of graded commutativity of the ring structure 
on $\dH^\sharp(M,M\setminus K;\bbZ)$,
we obtain $I h \cdot h'\, = h \cdot h'\, = (-1)^{k\, (m-k+1)}\, h'\,
\cdot h = (-1)^{k\, (m-k+1)}\, I h'\, \cdot h$
and the result follows by recalling \eqref{eqPairingDiffChar}.
\end{proof}

\begin{rem}\label{remRadical}
Unlike $\ips{\cdot}{\cdot}_{\rm c}$, the pairing $\ips{I \cdot}{\cdot}_{\rm c}$ given in 
\eqref{eqPairingCDiffChar} might be degenerate, even if $M$ is of finite-type.
This is because the homomorphism $I: \dH^k_{\rm c}(M;\bbZ) \to \dH^k(M;\bbZ)$ in general 
fails to be injective, cf.\ Remark \ref{remNonInj}. 
For $M$ of finite-type, Corollary \ref{corNonDegC} implies that
the degeneracy of the pairing \eqref{eqPairingCDiffChar} coincides precisely with $\ker I$.
\end{rem}

We finish by proving that the pairing \eqref{eqPairingCDiffChar} is natural.
\begin{propo}\label{prpPairingCDiffCHarNat}
For any morphism $f: M \to M^\prime$ in $\oMan_{m,\emb}$ the diagram
\begin{equation}
\xymatrix{
\dH^k_{\rm c}(M;\bbZ) \times \dH^{m-k+1}_{\rm c}(M;\bbZ)  \ar[rd]_-{\ips{I \cdot}{\cdot}_{\rm c}}		\ar[rr]^-{f_\ast \times f_\ast} &&
\dH^k_{\rm c}(M^\prime;\bbZ) \times \dH^{m-k+1}_{\rm c}(M^\prime;\bbZ) \ar[ld]^-{\ips{I \cdot}{\cdot}_{\rm c}}		 \\
&\bbT&
}
\end{equation}
commutes.
\end{propo}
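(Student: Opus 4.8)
The plan is to deduce the claim from two naturality statements already established in the excerpt: the naturality of the homomorphism $I$ expressed by the commuting square \eqref{eqCDiffCharToDiffCharNat}, and the naturality of the pairing $\ips{\cdot}{\cdot}_{\rm c}$ expressed by \eqref{eqPairingDiffCharNat}. Concretely, commutativity of the triangle amounts to the identity
\begin{equation*}
\ips{I h}{h'\,}_{\rm c} = \ips{I(f_\ast h)}{f_\ast h'\,}_{\rm c}
\end{equation*}
for all $h \in \dH^k_{\rm c}(M;\bbZ)$ and $h'\, \in \dH^{m-k+1}_{\rm c}(M;\bbZ)$, and I would prove this equality directly by chaining the two diagrams.

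First I would read off from \eqref{eqCDiffCharToDiffCharNat} that the two directed paths from $\dH^k_{\rm c}(M;\bbZ)$ to $\dH^k(M;\bbZ)$ agree, i.e.\ $I = f^\ast \circ I \circ f_\ast$, so that setting $g := I(f_\ast h) \in \dH^k(M^\prime;\bbZ)$ yields $I h = f^\ast g$. Then I would invoke \eqref{eqPairingDiffCharNat} for the pair $(g, h'\,) \in \dH^k(M^\prime;\bbZ) \times \dH^{m-k+1}_{\rm c}(M;\bbZ)$, which gives $\ips{f^\ast g}{h'\,}_{\rm c} = \ips{g}{f_\ast h'\,}_{\rm c}$. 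Composing these two identities produces
\begin{equation*}
\ips{I h}{h'\,}_{\rm c} = \ips{f^\ast g}{h'\,}_{\rm c} = \ips{g}{f_\ast h'\,}_{\rm c} = \ips{I(f_\ast h)}{f_\ast h'\,}_{\rm c}~,
\end{equation*}
which is exactly the desired equality. Along the way I would verify that all degrees match, namely that $g$ lands in degree $k$ on $M^\prime$ while $f_\ast h'\,$ lands in degree $m-k+1$ on $M^\prime$, so that the left instance of $\ips{\cdot}{\cdot}_{\rm c}$ is the one on $M^\prime$ and the right instance is the one on $M$.

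I do not expect a genuine obstacle, since the statement is a formal consequence of results already in place. The one point deserving care is the bookkeeping: the morphism $f$ pushes forward \emph{both} arguments of the pairing via $f_\ast$, so I must apply the naturality of $I$ to the first argument alone (turning $I h$ into $f^\ast g$ with $g = I(f_\ast h)$) and then use the adjunction-type relation \eqref{eqPairingDiffCharNat} between $f^\ast$ and $f_\ast$ to move the remaining $f^\ast$ off the first slot and onto the second. Implicitly this relies on $f$ being orientation preserving and on the uniqueness of the relative fundamental class $\mu$ used to define $\ips{\cdot}{\cdot}_{\rm c}$, cf.\ \cite[Lemma 3.27]{Hatcher}, but both ingredients are already incorporated into \eqref{eqPairingDiffCharNat}, so no new input is required.
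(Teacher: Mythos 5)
Your argument is correct and coincides with the paper's own proof: both rest on exactly the two naturality statements \eqref{eqCDiffCharToDiffCharNat} (giving $I = f^\ast \circ I \circ f_\ast$) and \eqref{eqPairingDiffCharNat} (moving $f^\ast$ off the first slot onto the second as $f_\ast$), merely chained in the opposite reading order. No gaps.
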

\begin{proof}
This is a direct consequence of naturality of 
$\ips{\cdot}{\cdot}_{\rm c}: \dH^k(M;\bbZ) \times \dH^{m-k+1}_{\rm c}(M;\bbZ) \to \bbT$,
cf.\ \eqref{eqPairingDiffCharNat},
and naturality of $I: \dH^k_{\rm c}(M;\bbZ) \to \dH^k(M;\bbZ)$,
cf.\  \eqref{eqCDiffCharToDiffCharNat}: The short calculation
\begin{equation}
\ips{I \, f_\ast \cdot}{f_\ast \cdot}_{\rm c} = \ips{f^\ast \, I \, f_\ast \cdot}{\cdot}_{\rm c} = \ips{I\cdot}{\cdot}_{\rm c}~
\end{equation}
proves the claim.
\end{proof}


\section*{Acknowledgments}
It is a pleasure to thank Christian B\"ar for very helpful discussions 
and Ulrich Bunke for his valuable comments on the first version of this paper.
We are grateful to the anonymous referees for their useful suggestions, 
that contributed to improve the quality of the paper. 
This work was supported in part by the Action MP1405 QSPACE from the 
European Cooperation in Science and Technology (COST).  
The work of C.B.\ is partially supported by the Collaborative Research Center (SFB) 
``Raum Zeit Materie'', funded by the Deutsche Forschungsgemeinschaft (DFG, Germany).
The work of M.B.\ is supported partly by a Research Fellowship of the Della Riccia Foundation (Italy) 
and partly by a Postdoctoral Fellowship of the Alexander von Humboldt Foundation (Germany). 
The work of A.S.\ is supported by a Research Fellowship of the Deutsche Forschungsgemeinschaft (DFG, Germany). 
The work of R.J.S.\ is partially supported by the Consolidated Grant ST/L000334/1 
from the UK Science and Technology Facilities Council.


\appendix

\section{Technical details for Section \ref{secPontrDuality}}\label{app:axuiliary}
We prove two lemmas which are used in Section \ref{secPontrDuality} 
to obtain the explicit characterizations \eqref{eqCFormsZ} and \eqref{eqFormsZ}
of (compactly supported) forms with integral periods.
For the first lemma we consider manifolds of finite-type only, 
while in the second lemma there is no such restriction. 

\begin{lem}\label{lemCFormsZ}
Let $M$ be an oriented $m$-dimensional manifold of finite-type.
Let $\alpha \in \Omega^k_{\rm c}(M)$ and recall the definition \eqref{eqRelFormsZ}. 
Then the following conditions are equivalent:
\begin{enumerate}
\item There exists $K \subseteq M$ compact such that $\alpha \in \OmegaZ^k(M,M \setminus K)$; 
\item $\int_M\, \omega \wedge \alpha \in \bbZ$, for each $\omega \in \OmegaZ^{m-k}(M)$. 
\end{enumerate}
\end{lem}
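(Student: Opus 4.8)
The plan is to prove the two implications separately, using Poincaré duality together with the colimit description of compactly supported cohomology over the directed set $\mcO^{\rm c}_M$ of relatively compact open subsets with smooth boundary, cf.\ Remark~\ref{remAlternativeCoho}. First I would record the standard fact that any form with integral periods (absolute or relative) is automatically closed: integrating $\dd\alpha$ over a singular simplex that is continuously contracted to a point gives a continuous integer-valued function, hence constantly zero, so $\dd\alpha=0$; the same applies to each $\omega\in\OmegaZ^{m-k}(M)$. Thus $\alpha$ is closed and defines a class $[\alpha]\in\H^k_{\rm c}(M;\bbR)$, and since $\int_M\dd\eta\wedge\alpha=0$ by Stokes' theorem for the compactly supported form $\eta\wedge\alpha$, condition~2 depends only on the classes $[\omega]$ and $[\alpha]$. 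As the forms in $\OmegaZ^{m-k}(M)$ represent precisely the integral lattice $\Hf^{m-k}(M;\bbZ)\subseteq\H^{m-k}(M;\bbR)$, condition~2 is equivalent to the statement that $[\alpha]$ pairs into $\bbZ$ with all of $\Hf^{m-k}(M;\bbZ)$ under the de~Rham pairing $([\omega],[\alpha])\mapsto\int_M\omega\wedge\alpha$.

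For the implication $1\Rightarrow2$ I would argue via the integral cup product. Taking $K=\supp\alpha$, the hypothesis $\alpha\in\OmegaZ^k(M,M\setminus K)$ says that the period homomorphism $z\mapsto\int_z\alpha$ on $\H_k(M,M\setminus K)$ is integer-valued; by the universal coefficient theorem this corresponds to an integral class $a\in\H^k(M,M\setminus K;\bbZ)$ refining $\alpha$, in the sense that $\int_z\alpha=\langle a,[z]\rangle$ for relative cycles $z$. Writing $w\in\H^{m-k}(M;\bbZ)$ for the integral class of $\omega$ and $\mu_K\in\H_m(M,M\setminus K)$ for the fundamental class restricting to the orientation over $K$ (cf.\ \cite[Lemma~3.27]{Hatcher}), the evaluation $(w\smile a)\mu_K$ is an integer. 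Since the singular cup product matches the wedge product under de~Rham's theorem and integration of the resulting top form against $\mu_K$ reproduces $\int_M\omega\wedge\alpha$ up to sign, this yields condition~2, the sign being irrelevant for integrality.

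For the harder implication $2\Rightarrow1$ I would invoke Poincaré duality in its integral form. For $M$ of finite-type all (co)homology groups are finitely generated, and Poincaré duality $\H^k_{\rm c}(M;\bbZ)\simeq\H_{m-k}(M)$ (cf.\ \cite[Theorem~3.35]{Hatcher} and the proof of Lemma~\ref{lem:auxisosH}) together with the Kronecker pairing identifies the de~Rham pairing on free parts with a \emph{perfect} pairing of lattices $\Hf^{m-k}(M;\bbZ)\times\Hf^k_{\rm c}(M;\bbZ)\to\bbZ$. By the reformulation of condition~2 above, $[\alpha]$ pairs integrally with $\Hf^{m-k}(M;\bbZ)$, and perfectness forces $[\alpha]\in\Hf^k_{\rm c}(M;\bbZ)$. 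Since colimits over directed sets are exact, $\Hf^k_{\rm c}(M;\bbZ)=\colim_{O\in\mcO^{\rm c}_M}\Hf^k(M,M\setminus O;\bbZ)$, so $[\alpha]$ is already integral at a finite stage: there is $O\in\mcO^{\rm c}_M$ with $\supp\alpha\subseteq O$ and $[\alpha]\in\Hf^k(M,M\setminus O;\bbZ)$. As $M\setminus O$ is properly embedded, the relative de~Rham theorem of Theorem~\ref{thmRelDiffCharRefined} applies and this integral class is equivalent to $\alpha$ having integral relative periods, i.e.\ $\alpha\in\OmegaZ^k(M,M\setminus O)$. Finally, every relative cycle in $Z_k(M,M\setminus\overline O)$ is one in $Z_k(M,M\setminus O)$, so $\alpha\in\OmegaZ^k(M,M\setminus\overline O)$ with $\overline O$ compact, which is condition~1.

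The main obstacle is the step in $2\Rightarrow1$ upgrading \emph{integer-valuedness of the pairing against the lattice} $\Hf^{m-k}(M;\bbZ)$ to \emph{membership} $[\alpha]\in\Hf^k_{\rm c}(M;\bbZ)$: this is exactly the unimodularity, on free parts, of the integral Poincaré duality pairing, and it is the only place where finite-type is essential, both to guarantee finitely generated (co)homology and to detect integrality already at a finite stage of the colimit over $\mcO^{\rm c}_M$. The remaining ingredients—closedness of period-integral forms, the de~Rham compatibility of cup and wedge products, and the reduction from the properly embedded complement $M\setminus O$ to the compact set $\overline O$—are routine.
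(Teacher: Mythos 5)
Your proof is correct and follows essentially the same route as the paper: both directions rest on the integer-valued cup-product pairing between $\Hf^{m-k}(M;\bbZ)$ and $\H^k_{{\rm c},\free}(M;\bbZ)$, its perfectness on free parts for finite-type $M$ (the unimodularity step you single out), the natural cochain homotopy between $\wedge$ and $\smile$, and detection of the integral class at a finite stage of the colimit. The only cosmetic differences are that the paper runs the colimit over $\mcK_M$ and concludes via the inclusion $\Hom(\H_k(M,M\setminus K),\bbZ)\subseteq\Hom(Z_k(M,M\setminus K),\bbZ)$ rather than via $\mcO^{\rm c}_M$ and the relative de~Rham theorem, and that under hypothesis~2 alone the closedness of $\alpha$ should be deduced from divisibility of the real vector space $\dd\Omega^{m-k-1}(M)\subseteq\OmegaZ^{m-k}(M)$ (which forces the integer-valued functional $\eta\mapsto\int_M\dd\eta\wedge\alpha$ to vanish, whence $\dd\alpha=0$ by Stokes) rather than from the ``integral periods imply closed'' fact you cite, which is not yet available at that point.
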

\begin{proof}
$1 \Rightarrow 2$: Let $\alpha \in \Omega^k_\bbZ(M,M \setminus K)$ for 
a compact subset $K \subseteq M$. 
By the definition of $\Omega^k_\bbZ(M,M \setminus K)$ in \eqref{eqRelFormsZ}, 
the cochain $\int_\cdot\, \alpha \in C^k(M,M \setminus K;\bbR)$ induces a relative cohomology class 
$\int_\cdot \alpha \in \Hom(\H_k(M,M \setminus K),\bbZ)) \simeq \Hf^k(M,M \setminus K;\bbZ)$. 
Passing to the colimit, we interpret $\int_\cdot\, \alpha$ as an element of $\H^k_{{\rm c},\free}(M;\bbZ)$. 
Given $\omega \in \Omega^{m-k}_\bbZ(M)$, the cochain $\int_\cdot\, \omega \in C^{m-k}(M;\bbR)$ 
induces a cohomology class $\int_\cdot \omega \in \Hom(\H_{m-k}(M),\bbZ) \simeq \Hf^{m-k}(M;\bbZ)$. 
Similarly to \cite[Proposition 3.38]{Hatcher}, 
the cup product for singular cohomology provides an integer-valued pairing 
\begin{equation}\label{eqFreeCohoPairing}
\H_\free^{m-k}(M;\bbZ)  \times \H^k_{{\rm c},\free}(M;\bbZ) \longrightarrow \bbZ~,
\end{equation}
which gives $(\, \int_\cdot\, \omega \smile \int_\cdot\, \alpha\, ) \mu \in \bbZ$ 
upon evaluation on $(\, \int_\cdot\, \omega,\int_\cdot\, \alpha\, )$. 
Here $\mu \in \H_m(M,M \setminus K)$ denotes the unique relative homology class 
which restricts to the orientation of $M$ at each point of $K$. 
Since $\wedge$ and $\smile$ are naturally cochain homotopic on differential forms,
we have $\int_M\, \omega \wedge \alpha = (\, \int_\cdot \, \omega
\smile \int_\cdot\, \alpha\, )\mu \in \bbZ$. 
\sk

$2 \Rightarrow 1$: Let $\alpha \in \Omega^k_{\rm c}(M)$ with 
$\int_M\, \omega \wedge \alpha \in \bbZ$, for all $\omega \in \Omega^{m-k}_\bbZ(M)$,
and denote the support of $\alpha$ by $K^\prime:= \supp\, \alpha \subseteq M$. 
We have to show that $\int_\cdot\, \alpha \in C^k(M;\bbR)$ 
induces a $\bbZ$-valued homomorphism on $Z_k(M,M \setminus K)$ for some $K \subseteq M$ compact. 
By our assumptions, $\int_M\, \cdot \wedge \alpha$ defines a $\bbZ$-valued homomorphism 
on $\Omega^{m-k}_\bbZ(M) / \dd \Omega^{m-k-1}(M) \simeq \Hf^{m-k}(M;\bbZ)$. 
Taking into account the pairing \eqref{eqFreeCohoPairing} 
and recalling that $\wedge$ and $\smile$ are naturally cochain homotopic on differential forms, 
we obtain $(\, \int_\cdot \, \omega \smile \int_\cdot\, \alpha\, )\mu = \int_M\, \omega \wedge \alpha$, 
for all $\omega \in \Omega^{m-k}_\bbZ(M)$.
As a consequence, we have $( \cdot\smile \int_\cdot\,\alpha)\mu \in \Hom(\Hf^{m-k}(M;\bbZ),\bbZ)$. 
By \cite[Proposition 3.38]{Hatcher} extended to manifolds of finite-type, 
the pairing induced by the cup product is perfect. In particular, 
it provides an isomorphism $\H^k_{{\rm c},\free}(M;\bbZ) \simeq \Hom(\Hf^{m-k}(M;\bbZ),\bbZ)$, 
hence $\int_\cdot \, \alpha \in \H^k_{{\rm c},\free}(M;\bbZ)$. Recalling the definition 
of $\H^k_{{\rm c},\free}(M;\bbZ)$ in terms of a colimit, one finds $K \subseteq M$ compact such that 
$\int_\cdot\, \alpha \in \Hf^k(M,M \setminus K;\bbZ) \simeq \Hom(\H_k(M,M \setminus K),\bbZ)$. 
The implication then follows from the obvious inclusion 
$\Hom(\H_k(M,M \setminus K),\bbZ) \subseteq \Hom(Z_k(M,M \setminus K),\bbZ)$. 
\end{proof}

\begin{lem}\label{lemFormsZ}
Let $M$ be an oriented $m$-dimensional manifold (not necessarily of finite-type). 
Let $\omega \in \Omega^k(M)$. Then the following conditions are equivalent: 
\begin{enumerate}
\item $\omega$ has integral periods, i.e.\ $\omega \in \OmegaZ^k(M)$; 
\item $\int_M\, \omega \wedge \alpha \in \bbZ$, for each $\alpha \in \Omega^{m-k}_{{\rm c},\bbZ}(M)$. 
\end{enumerate}
\end{lem}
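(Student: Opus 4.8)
The plan is to prove the two implications separately, following the blueprint of Lemma~\ref{lemCFormsZ} with the roles of $\omega$ and $\alpha$ interchanged. The crucial difference is that the converse direction will rely on a form of Poincaré duality available on \emph{any} oriented manifold, which is precisely why no finite-type hypothesis is needed here.

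For $1\Rightarrow2$, I would first observe that a form with integral periods is automatically closed: evaluating $\int_\gamma\dd\omega=\int_{\del\gamma}\omega\in\bbZ$ on arbitrarily small $(k+1)$-simplices $\gamma$ forces $\dd\omega=0$. Hence $\int_\cdot\,\omega\in Z^k(M;\bbR)$ is a cocycle defining a class in $\Hom(\H_k(M),\bbZ)\simeq\Hf^k(M;\bbZ)$. Given $\alpha\in\Omega^{m-k}_{{\rm c},\bbZ}(M)$, the same reasoning makes $\alpha$ closed and $\int_\cdot\,\alpha$ a relative cocycle defining, in the colimit, a class in $\H^{m-k}_{{\rm c},\free}(M;\bbZ)$. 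Exactly as in Lemma~\ref{lemCFormsZ}, the integral cup-product pairing $\Hf^k(M;\bbZ)\times\H^{m-k}_{{\rm c},\free}(M;\bbZ)\to\bbZ$ of \cite[Proposition~3.38]{Hatcher}, evaluated on the fundamental class $\mu\in\H_m(M,M\setminus K)$ and combined with the natural cochain homotopy between $\wedge$ and $\smile$, yields $\int_M\,\omega\wedge\alpha=(\int_\cdot\,\omega\smile\int_\cdot\,\alpha)\mu\in\bbZ$.

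For the converse $2\Rightarrow1$ I would proceed in two steps. First, testing the hypothesis against exact compactly supported forms $\alpha=\dd\beta$ with $\beta\in\Omega^{m-k-1}_{\rm c}(M)$ — which lie in $\Omega^{m-k}_{{\rm c},\bbZ}(M)$ since their relative periods vanish — and using Stokes' theorem gives $\int_M\,\dd\omega\wedge\beta\in\bbZ$ for all such $\beta$; rescaling $\beta$ by real factors forces $\int_M\,\dd\omega\wedge\beta=0$ and hence $\dd\omega=0$ by non-degeneracy of the wedge pairing. Thus $\int_\cdot\,\omega$ descends to a homomorphism $\phi:\H_k(M)\to\bbR$, $[z]\mapsto\int_z\,\omega$, which vanishes on torsion classes (if $n[z]=0$ then $n\int_z\,\omega=\int_{\del c}\,\omega=\int_c\,\dd\omega=0$), so it suffices to show $\phi$ is integer-valued on the free part of $\H_k(M)$.

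The second step is the heart of the argument: I would invoke Poincaré duality in the form $\H^{m-k}_{\rm c}(M;\bbZ)\xrightarrow{\simeq}\H_k(M)$, $c\mapsto c\cap\mu$, which holds for every oriented manifold by \cite[Theorem~3.35]{Hatcher} with no finiteness assumption. Given $[z]\in\H_k(M)$, choose an integral preimage $c\in\H^{m-k}_{\rm c}(M;\bbZ)$. Since $\phi$ sees only free classes, I may pass to the image of $c$ in $\H^{m-k}_{{\rm c},\free}(M;\bbZ)$ and, using that the right column of the diagram in Theorem~\ref{thmCDiffChar} exhibits $\Omega^{m-k}_{{\rm c},\bbZ}(M)\to\H^{m-k}_{{\rm c},\free}(M;\bbZ)$ as surjective, represent it by a genuine compactly supported integral form $\alpha\in\Omega^{m-k}_{{\rm c},\bbZ}(M)$. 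The cup-cap adjunction $(u\smile c)\mu=u(c\cap\mu)$ with $u=[\int_\cdot\,\omega]$, together with the $\wedge$--$\smile$ cochain homotopy, then gives $\int_z\,\omega=\phi(c\cap\mu)=(\int_\cdot\,\omega\smile\int_\cdot\,\alpha)\mu=\int_M\,\omega\wedge\alpha\in\bbZ$ by the hypothesis. The main obstacle is exactly this reduction — turning the abstract integral Poincaré-dual class $c$ into an actual compactly supported form with integral periods so that the hypothesis (stated for forms) can be applied. It is resolved by the surjectivity furnished by Theorem~\ref{thmCDiffChar} rather than by any perfect-pairing argument, and it is this that keeps the proof free of the finite-type assumption required in Lemma~\ref{lemCFormsZ}.
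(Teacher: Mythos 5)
Your proposal is correct and follows essentially the same route as the paper's proof: Stokes' theorem on exact compactly supported test forms to get $\dd\omega=0$, Poincar\'e duality $\H_k(M)\simeq\H^{m-k}_{\rm c}(M;\bbZ)$ (valid without finite-type hypotheses), vanishing of the pairing on torsion, surjectivity of $\Omega^{m-k}_{{\rm c},\bbZ}(M)\to\H^{m-k}_{{\rm c},\free}(M;\bbZ)$ from the right column of Theorem \ref{thmCDiffChar}, and the natural cochain homotopy between $\wedge$ and $\smile$. The only difference is presentational: you separate the two implications explicitly, whereas the paper runs a single chain of equivalences.
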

\begin{proof}
Let $\omega \in \Omega^k(M)$ satisfy 
$\int_M\, \omega \wedge \Omega^{m-k}_{{\rm c},\bbZ}(M) \subseteq \bbZ$. 
Since $\dd \Omega_{\rm c}^{m-k-1}(M)$ is a vector space over $\bbR$ and also a subgroup of $\Omega^{m-k}_{{\rm c},\bbZ}(M)$, 
we deduce that $\int_M\, \omega \wedge \dd \Omega_{\rm c}^{m-k-1}(M) = \{0\}$, 
hence $\dd \omega = 0$ via Stokes' theorem. 
This implies that $\int_\cdot\, \omega \in C^k(M;\bbR)$ descends to a homomorphism on $\H_k(M)$. 
Recalling Poincar\'e duality, see e.g.\ \cite[Theorem~3.35]{Hatcher}, 
there is a natural isomorphism $\H_k(M) \simeq \H^{m-k}_{\rm c}(M;\bbZ)$.
Moreover, for each $x \in \H_k(M)$ one has $\int_x\, \omega = (\xi \smile \int_\cdot\, \omega)\mu$, 
where $\xi \in \H^{m-k}_{\rm c}(M;\bbZ)$ is the image of $x$ under Poincar\'e duality. On the right hand side
of this equation we have chosen a representative $\xi \in \H^{m-k}(M,M \setminus K;\bbZ)$  in the colimit,
for $K\subseteq M$ compact, and $\mu \in \H_m(M,M \setminus K)$ 
is the unique element which agrees with the orientation of $M$ at each point of $K$. 
For $\xi \in \H^{m-k}_{{\rm c},\tor}(M;\bbZ)$ we have $(\xi \smile \int_\cdot\, \omega)\mu = 0$
by a similar argument as in \cite[Proposition~3.38]{Hatcher}. 
Therefore, showing that $\omega$ has integral periods is equivalent to 
checking that the cup product between any $\xi \in \H^{m-k}_{{\rm c},\free}(M;\bbZ)$ and 
$\int_\cdot\, \omega \in \H^k(M;\bbR)$ is $\bbZ$-valued. 
Taking into account exactness of the right column of the diagram displayed in Theorem \ref{thmCDiffChar}, 
and recalling that $\wedge$ and $\smile$ are naturally cochain homotopic on differential forms, 
we conclude that $\omega$ has integral periods 
if and only if $\int_M\, \omega\wedge  \alpha \in \bbZ$, for all $\alpha \in \Omega^{m-k}_{{\rm c},\bbZ}(M)$.
This shows that $1 \Leftrightarrow 2$. 
\end{proof}


\end{document}